\documentclass[intlimits,a4paper,10pt,reqno]{amsproc}
\usepackage{amsmath,color}
\usepackage{amssymb}
\usepackage{amsfonts}
\usepackage{amsxtra}
\usepackage{graphicx}
\usepackage{xspace}
\usepackage{multirow}
\usepackage{enumitem}

\usepackage[active]{srcltx}

\definecolor{rltred}{rgb}{0.75,0,0}
\definecolor{rltgreen}{rgb}{0,0.5,0}
\definecolor{rltblue}{rgb}{0,0,0.75}

\usepackage[%
   colorlinks=true,
   urlcolor=rltblue,       
   filecolor=rltgreen,     
   citecolor=rltgreen,     
   linkcolor=rltred,       
   bookmarks=true,
   unicode,
   plainpages=false,
   ]{hyperref}

\usepackage[operator,rose,frak]
{paper_diening}
\usepackage{amsmath}
\usepackage{amssymb,amsthm,amsxtra,amscd}

\newcommand{\Oe}{\widetilde \Omega_\vep}

\newcommand{\td}{\partial_{\tau}}
\newcommand{\tn}{\partial_{3}}
\renewcommand{\O}{\Omega}

\newcommand{\eR}{{\mathbb R}}

\newcommand{\T}{\mathbf{S}}
\newcommand{\F}{\mathbf{F}}

\newcommand{\Bog}{{\tt B}}

\DeclareMathOperator{\diver}{div}

\DeclareMathOperator{\spt}{spt}

\newcommand{\intO}{\int_\O}

\newcommand{\para}{{\delta}}

\newcommand{\dist}{\operatorname{dist}}

\newtheorem{theorem}[equation]{Theorem}
\newtheorem{lemma}[equation]{Lemma}
\newtheorem{propo}[equation]{Proposition}
\newtheorem{Lemma}[equation]{Lemma}
\newtheorem{corollary}[equation]{Corollary}
\newtheorem{Proposition}[equation]{Proposition}
\newtheorem{definition}[equation]{Definition}

\newtheorem{remark}[equation]{Remark}

\numberwithin{equation}{section}

\newcommand{\otimess}{\overset{s}{\otimes}}

\newcommand{\trapm}[1]{{#1}_{\pm\tau}}
\newcommand{\difpm}[1]{d^\pm{#1}}
\newcommand{\trap}[1]{{#1}_{\tau}}
\newcommand{\difp}[1]{d^+{#1}}
\newcommand{\tran}[1]{{#1}_{-\tau}}
\newcommand{\difn}[1]{d^-{#1}}
\newcommand{\II}{\Lambda}
\newcommand{\bou}{\partial\Omega}
\newcommand{\meantmp}[2]{#1\langle{#2}#1\rangle}
\newcommand{\mean}[1]{\meantmp{}{#1}}
\newcommand{\bigmean}[1]{\meantmp{\big}{#1}}



\begin{document}

\title{Global regularity properties of steady shear thinning flows}
\author{Luigi C. Berselli }
\address{Luigi C. Berselli, Dipartimento di Matematica, Universit\`a di Pisa, Via
  F.~Buonarroti~1/c, I-56125 Pisa, ITALY.}
\email{luigi.carlo.berselli@unipi.it} 
\author {Michael  R\r u\v zi\v cka{}} 
\address{Michael  R\r u\v zi\v cka{}, Institute of Applied Mathematics,
   Albert-Ludwigs-University Freiburg, Eckerstr.~1, D-79104 Freiburg,
   GERMANY.}
\email{rose@mathematik.uni-freiburg.de}

\begin{abstract}
  In this paper we study the regularity of weak solutions to systems
  of $p$-Stokes type, describing the motion of some shear thinning
  fluids in certain steady regimes.  In particular we address the
  problem of regularity up to the boundary improving previous results
  especially in terms of the allowed range for the parameter $p$.
  \\[3mm]
  \textbf{Keywords.} Regularity of weak solutions, $p$-Stokes type
  problem.
\end{abstract}

\maketitle

%
%
\section{Introduction}
\label{sec:intro}
%
%
%
In this paper we study regularity for weak solution the steady Stokes
approximation for flows of shear thinning fluids which is given by
\begin{equation}
  \label{eq}
  \begin{aligned}
    -\divo \bfS(\bfD\bu)+\nabla \pi&=\bff\qquad&&\text{in
    }\Omega,
    \\
    \divo\bu&=0\qquad&&\text{in }\Omega,
    \\
    \bfu &= \bfzero &&\text{on } \partial \Omega.
  \end{aligned}
\end{equation}
Here $\Omega\subset\setR^3$ is a bounded domain with a $C^{2,1}$
boundary $\partial\Omega$ (we restrict ourselves to the most interesting problem
from the physical point of view, even if results can be easily
transferred to the problem in $\setR^d$ for all $d\geq 2$). The
unknowns are the velocity vector field $\bu=(u^1,u^2,u^3)^\top$ and
the scalar pressure $\pi$, while the external body force
$\bff=(f^1,f^2,f^3)^\top$ is given. The extra stress tensor $\bfS$
depends only on $\bfD\bu:=\tfrac 12(\nabla\bu+\nabla\bu^\top)$, the
symmetric part of the velocity gradient $\nabla \bu$. Physical
interpretation and discussion of some non-Newtonian fluid models can
be found, e.g., in~\cite{bird, mrr, ma-ra-model}. The relevant example
which we will study is the following
\begin{equation}
  \label{eq:stress}
  \bfS(\bD \bu) = \mu_0 \bD\bu + \mu_1 (\delta+\abs{\bD \bu})^{p-2}
  \bD\bu\,, 
\end{equation}
with $p \in (1,2]$, $\delta>0$, and $\mu_0, \mu_1 \ge 0$ satisfying
$\mu_1 >0$.

We study global regularity properties of second order derivatives of
weak solutions to~\eqref{eq} for sufficiently smooth bounded domains
$\O$. If $\mu_0 >0$ we obtain the optimal result, namely
$\bF(\bD \bu) \in W^{1,2}(\Omega)$, where the nonlinear tensor-valued
function $\bF$ is defined in~\eqref{eq:def_F}. This is the same result
as for the $p$-Stokes problem in the periodic setting.  For $\mu_0=0$
we prove (among other results) that $\bF(\bD\bu) \in W^{1,q}(\O)$, for
some $q=q(p)\in[1,2[$. The precise results are formulated in
Theorem~\ref{thm:MT1} and Theorem~\ref{thm:MT2}, where we also
write the regularity results in terms of Sobolev spaces. %
We treat here the case without convective term, since this quantity
can be handled in a more or less standard way, by Sobolev estimates,
once the precise regularity in terms of the right-hand side is proved.

Our main interest is to handle the case of a non-flat boundary and to
consider the full range $p\in]1,2[$. We recall that regularity results
in the flat have been obtained in~\cite{Crispo-2009} and under
various conditions by in~\cite{hugo-thin} (respectively
in the case $\mu_0>0$ and $\mu_0=0$). The situation in the non-flat
case becomes much more technical and we refer to the paper~\cite{mnr}
for an early treatment in the case $p>2$. Further results in the case
$p>2$ are present in~\cite{hugo-halfspace,hugo-nonflat,hugo-petr-rose}
while some results in the case $p<2$ are given
in~\cite{hugo-thin-nonflat}.  For a similar problem without pressure,
strong results are proved in~\cite{SS00} in the flat case.
For a treatment of simpler problems in the non flat case (nonlinear
elliptic problems without the divergence constraint)
see~\cite{Ber-Gri2-2016}.

In all previous studies of the $p$-Stokes problem for $p<2$ a
technical restriction $p> \frac 32$ occurs, which is due to the
presence of some algebraic systems to recover certain derivatives in
the normal direction. We are now able to remove this restriction by
deriving an algebraic system for a more intrinsic quantity related to
the stress tensor. Here we are giving a self-contained treatment of
the steady problem in the whole range $p \in ]1,2[$, and we also recover all
previously known results in the range $\frac{3}{2}<p<2$.  We point out
that the main difficulty is that of treating at the same time the
difficulties arising from: i) a non-linear stress tensor; ii) the
divergence-free condition (with the related pressure); iii) a
non-flat domain. 

\medskip

\noindent\textbf{Plan of the paper} The paper is organized as
follows. In Section~\ref{sec:prep} we recall the notation used
throughout the paper. Moreover, we recall some basic facts related to
the difference quotient in tangential directions and to the extra
stress tensor $\T$. In Section~\ref{sec:Stokes} we prove
Theorem~\ref{thm:MT1}. In particular, we treat in detail the
regularity in tangential directions in Section~\ref{sec:tan} and in
normal directions in Section~\ref{sec:nor}. Moreover, we prove some
regularity properties of the pressure. The same procedure is carried
out for the proof of Theorem~\ref{thm:MT2} in
Section~\ref{sec:p-stokes}. 
%
%
%
\section{Preliminaries and main results}
\label{sec:prep} 
In this section we introduce the notation we will use, state the
precise assumptions on the extra stress tensor $\bS$, and formulate
the main results of the paper. 
\subsection{Function spaces}
We use $c, C$ to denote generic constants, which may change from line
to line, but are independent of the crucial quantities. Moreover, we
write $f\sim g$ if and only if there exists constants $c,C>0$ such
that $c\, f \le g\le C\, f$.

In addition to the classical space
$(C^{k,\lambda}(\Omega),\,\|\,.\,\|_{C^{k,\lambda}})$ of H\"older
continuous functions for $0<\lambda<1$ (and Lipschitz-continuous when
$\lambda=1$) we use standard Lebesgue spaces
$(L^p(\Omega),\,\|\,.\,\|_{p})$ and Sobolev spaces
$(W^{k,p}(\Omega),\,\|\,.\,\|_{k,p})$, where $\Omega \subset \setR^3$,
is a sufficiently smooth bounded domain. When dealing with functions
defined only on some open subset $\omega\subset \Omega$, we denote the
norm in $L^p(\omega)$ by $\|\,.\,\|_{p,\omega}$. The symbol $\spt f$ denotes the
support of the function $f$. We do not distinguish between scalar,
vector-valued or tensor-valued function spaces. However, we denote
vectors by boldface lower-case letter as e.g.~$\bu$ and tensors by
boldface upper case letters as e.g.~$\bS$. If $\bu\in \eR^m$ and
$\bv\in\eR^n$ then the tensor product
$\bu\otimes \bv\in \eR^{m \times n}$ is defined as
$(\bu\otimes \bv)_{ij}:=u_iv_j$. If $m=n$ then
$\bu\otimess \bv:=\frac 12 (\bu\otimes \bv + (\bu\otimes
\bv)^\top)$.
The Euclidean scalar product is denoted by $\bu\cdot \bv= u_i v_i$ and
the scalar product $\bA\cdot\bB:=A_{i j}B_{i j}$ of second order
tensor is denoted also by the same symbol. Here and in the sequel we
use the summation convention over repeated Latin indices. %
The space $W^{1,p}_0(\Omega)$ is the closure of the compactly
supported, smooth functions $C^\infty_0 (\Omega)$ in
$W^{1,p}(\Omega)$. Thanks to the \Poincare{} inequality we equip
$W^{1,p}_0(\Omega)$ with the gradient norm $\norm{\nabla\,
  \cdot\,}_p$. We denote by $W^{1,p}_{0,\divo}(\O)$ the subspace of
$W^{1,p}_{0}(\O)$ consisting of divergence-free vector fields $\bu$,
i.e., such that $\divo \bu =0$. We denote by $\abs{M}$ the
$3$-dimensional Lebesgue measure of a measurable set $M$. The mean
value of a locally integrable function $f$ over a measurable set $M
\subset \Omega$ is denoted by $\mean{f}_M:= \dashint_M f \, d\bx =\frac
1 {|M|}\int_M f \, d\bx$. By $L^p_0(\Omega)$ we denote the subspace of
$L^p(\Omega)$ consisting of functions $f$ with vanishing mean value,
i.e., $\mean {f}_{\O} =0$. For a normed space $X$ we denote its
topological dual space by $X^*$.

We will also use Orlicz and Sobolev--Orlicz spaces
(cf.~\cite{ren-rao}). We use N-functions
$\psi \,:\, \setR^{\geq 0} \to \setR^{\geq 0}$. We always assume that
$\psi$ and the conjugate N-function $\psi ^*$ satisfy the
$\Delta_2$-condition. We denote the smallest constant such that
$\psi(2\,t) \leq K\, \psi(t)$ by $\Delta_2(\psi)$.
We denote by $L^\psi(\Omega)$ and $W^{1,\psi}(\Omega)$ the classical
Orlicz and Sobolev-Orlicz spaces, i.e, $f \in L^\psi(\Omega)$ if the
\textit{modular} $\rho_\psi(f):=\int_\Omega \psi(\abs{f})\, d\bx $ is
finite and $f \in W^{1,\psi}(\Omega)$ if $f$ and $ \nabla f$ belong to
$L^\psi(\Omega)$.  When equipped with the Luxembourg norm $\norm
{f}_{\psi}:= \inf \set{\lambda >0 \fdg \int_\Omega
  \psi(\abs{f}/\lambda)\, d\bx \le 1}$ the space $L^\psi(\Omega)$
becomes a Banach space. The same holds for the space
$W^{1,\psi}(\Omega)$ if it is equipped with the norm $\norm {\cdot
}_{\psi} +\norm {\nabla \cdot}_{\psi} $.  Note that the dual space
$(L^\psi(\Omega))^*$ can be identified with the space
$L^{\psi^*}(\Omega)$. By $W^{1,\psi}_0(\Omega)$ we denote the closure
of $C^\infty_0(\Omega)$ in $W^{1,\psi}(\Omega)$ and equip it with the
gradient norm $\norm{\nabla \cdot}_\psi$. By $L^\psi_0(\Omega)$ and
$C^\infty_{0,0}(\Omega)$ we denote the subspace of $L^\psi(\Omega)$
and $C^\infty_0(\Omega)$, respectively, consisting of functions $f$
such that $\mean {f}_{\O}
=0$. 

We need the following refined version of the Young inequality: for all
$\varepsilon >0$ there exists $c_\epsilon>0 $, depending only on
$\Delta_2(\psi),\Delta_2( \psi ^*)<\infty$, such that for all
$s,t\geq0$ it holds
\begin{align}
  \label{eq:ineq:young}
  \begin{split}
    ts&\leq \epsilon \, \psi(t)+ c_\epsilon \,\psi^*(s)\,,
    \\
    t\, \psi'(s) + \psi'(t)\, s &\le \epsilon \, \psi(t)+ c_\epsilon
    \,\psi(s)\,.
  \end{split}
\end{align}
\subsection{$(p,\delta)$-structure}
\label{sec:stress_tensor}
We now define what it means that a tensor field $\bS$ has
$(p,\delta)$-structure.  A detailed discussion and full proofs of the
results cited can be found in~\cite{die-ett,dr-nafsa}. For a tensor
$\bfP \in \setR^{3 \times 3} $ we denote its symmetric part by
$\bP^\sym:= \frac 12 (\bP +\bP^\top) \in \setR_\sym ^{3 \times 3}:=
\set {\bfP \in \setR^{3 \times 3} \,|\, \bP =\bP^\top}$. 
We use the notation $\abs{\bP}^2=\bP \cdot \bP ^\top$.

It is convenient to define for $t\geq0 $ a special N-function
$\phi(\cdot)=\phi(p,\delta;\cdot)$, for $p \in (1,\infty)$, $\delta\ge
0$, by
\begin{equation*} 
  \label{eq:5}
  \varphi(t):= \int _0^t \gamma(s)\, ds\qquad\text{with}\quad
  \gamma(t) := (\delta +t)^{p-2} t\,.
\end{equation*}
The function $\phi$ satisfies, uniformly in $t$ and independent of
$\delta$, the important equivalence\footnote{Note that if $\phi''(0)$
  does not exist, the left-hand side in~\eqref{eq:equi1} is
  continuously extended by zero for $t= 0$. }
\begin{align}
  \label{eq:equi1}
  \phi''(t)\, t \sim \phi'(t)
\end{align}
since 
\begin{equation}
  \min\set{1,p-1}\,(\para+t)^{p-2} \le \varphi''(t)\leq
  \max\set{1,p-1}(\para+t)^{p-2}\,.\label{eq:phi''}
\end{equation}
Moreover, the function $\phi$
satisfies the $\Delta_2$-condition with $\Delta_2(\phi) \leq
c\,2^{\max \set{2,p}}$ (hence independent of $\delta$). This implies
that, uniformly in $t$ and independent of $\delta$, we have
\begin{align}
  \label{eq:equi2}
  \phi'(t)\, t \sim \phi(t)\,.
\end{align}
The conjugate function $\phi^*$ satisfies $\phi^*(t) \sim
(\delta^{p-1} + t)^{p'-2} t^2$ where
$\frac{1}{p}+\frac{1}{p'}=1$. Also $\phi^*$ satisfies the
$\Delta_2$-condition with $\Delta_2(\phi^*) \leq c\,2^{\max
  \set{2,p'}}$. Using the structure of $\varphi$ we get for all $t,
\lambda \ge 0$
\begin{equation}
  \label{eq:est-l}
  \varphi (\lambda\, t) \le \max (\lambda^p,\lambda^2) \varphi (t) \le
  \max (1,\lambda^2) \varphi (t)  
\end{equation}
if $p\le 2$.

For a given N-function $\psi$ we define the {\rm shifted N-functions}
$\set{\psi_a}_{a \ge 0}$, cf. ~\cite{die-ett,die-kreu,dr-nafsa}, for
$t\geq0$ by
\begin{align}
  \label{eq:phi_shifted}
  \psi_a(t):= \int _0^t \psi_a'(s)\, ds\qquad\text{with }\quad
  \psi'_a(t):=\psi'(a+t)\frac {t}{a+t}.
\end{align}
\begin{remark} \label{rem:phi_a} 
{\rm 
  
  (i) Defining $\omega(t)=\omega(q;t):=\frac 1q t^q$, $q\in
  (1,\infty)$ we have for the above defined N-function 
  \begin{equation*}
     \phi(t)= \omega_\delta(p;t).
   \end{equation*}

  (ii) 
  Note that $\phi_a(t) \sim (\delta+a+t)^{p-2} t^2$ and also
  ${(\phi_a)^*(t) \sim ((\delta+a)^{p-1} + t)^{p'-2} t^2}$.  The
  families $\set{\phi_a}_{a \ge 0}$ and $\set{(\phi_a)^*}_{a \ge 0}$
  satisfy the $\Delta_2$-condition uniformly with respect to ${a \ge
    0}$, with $\Delta_2(\phi_a) \leq c\, 2^{\max \set{2,p}}$ and
  $\Delta_2((\phi_a)^*) \leq c\, 2^{\max \set{2,p'}}$,
  respectively. The equivalences~\eqref{eq:equi1} and~\eqref{eq:equi2}
  are satisfied for the families $\set{\phi_a}_{a \ge 0}$ and
  $\set{(\phi_a)^*}_{a \ge 0}$, uniformly in $a \ge 0$.  }
\end{remark}
%
\begin{definition}[$(p,\delta)$-structure]
\label{ass:1}
  We say that a tensor field $\bS\colon \setR^{3 \times 3} \to
  \setR^{3 \times 3}_\sym $ belonging to $C^0(\setR^{3 \times
    3},\setR^{3 \times 3}_\sym )\cap C^1(\setR^{3 \times 3}\setminus
  \{\bfzero\}, \setR^{3 \times 3}_\sym ) $, satisfying $\bS(\bP) =
  \bS\big (\bP^\sym \big )$, and $\bS(\mathbf 0)=\mathbf 0$ possesses
  {\rm $(p,\para)$-structure}, if for some $p \in (1, \infty)$,
  $\para\in [0,\infty)$, and the N-function
  $\varphi=\varphi_{p,\delta}$ (cf.~\eqref{eq:5}) there exist
  constants $\kappa_0, \kappa_1 >0$ such that
   \begin{equation}
     \label{eq:ass_S}
     \begin{aligned}
       \sum\limits_{i,j,k,l=1}^3 \partial_{kl} S_{ij} (\bP)
       Q_{ij}Q_{kl} &\ge \kappa_0
       \,\phi''(|\bP^\sym|)|\bQ^\sym|^2\,,
       \\[-2mm]
       \big |\partial_{kl} S_{ij}({\bP})\big | &\le \kappa_1 \,\phi''
       (|\bP^\sym|)\,.
     \end{aligned}
   \end{equation} 
   are satisfied for all $\bP,\bQ \in \setR^{3\times 3} $ with
   $\bP^\sym \neq \bfzero$.  The constants $\kappa_0$, $\kappa_1$, and
   $p$ are called the {\em characteristics} of $\bfS$.
\end{definition}

\begin{remark}
  The above assumption is motivated by the typical examples for the
  extra stress tensor in mathematical fluid mechanics. For example
  constitutive relations of  power-law type, Carreau type, Cross-type
  or~\eqref{eq:stress} satisfy this assumption. 
  We refer the reader to~\cite{bdr-7-5,dr-7-5,mnrr,mrr} for a more
  detailed discussion leading to Definition~\ref {ass:1}.
\end{remark}

\begin{remark}
  \label{rem:delta0}
  {\rm (i) Assume that $\bS$ has $(p,\delta)$-structure for some
    $\delta \in [0,\delta_0]$. Then, if not otherwise stated, the
    constants in the estimates depend only on the characteristics of
    $\bfS$ and on $\delta_0$ but are independent of $\delta$. This
    dependence comes from the difference between the modular and the
    norm in the case of Orlicz spaces.
 
    (ii) An important example of an extra stress $\bfS$ having
     $(p,\delta)$-structure is given by $ \bfS(\bfP) =
    \phi'(\abs{\bfP^\sym})\abs{\bfP^\sym}^{-1} \bfP^\sym$.  In this
    case the characteristics of $\bfS$, namely $\kappa_0$, $\kappa_1$, and $p$,
    depend only on $p$ and are independent of $\delta \geq 0$.
  }
\end{remark}

\begin{remark}
  \label{rem:equi-norm}
  {\rm 
    For the family $(\phi _a)$, with $a \in [0,a_0]$, ${\delta
    \in [0,\delta_0]}$ and $p \in (1,\infty)$, we get $L^{\phi_a^*}(\Omega) =
    L^{p'}(\Omega)$ and $W^{1,\phi_a}(\Omega)=W^{1,p}(\Omega) $ with
    uniform equivalence of the corresponding norms depending on $p$, $a_0$
    and $\delta_0$, since $\Omega$ is bounded and $\phi_a$ and
    $\omega(p;\cdot)$ are equivalent at infinity (cf.~\cite{ren-rao}).
  } 
\end{remark}

To a tensor field $\bS$ with $(p,\delta)$-structure we associate
the tensor field $\bF\colon\setR^{3 \times 3} \to \setR^{3 \times
  3}_\sym$ defined through
\begin{align}
  \label{eq:def_F}
  \bF(\bP):= \big (\para+\abs{\bP^\sym} \big )^{\frac
    {p-2}{2}}{\bP^\sym } \,.
\end{align}

The connection between $\bfS$, $\bfF$, and $\set{\phi_a}_{a \geq 0}$
is best explained by the following proposition
(cf.~\cite{die-ett,dr-nafsa}). 
\begin{propo}
  \label{lem:hammer}
  Let $\bfS$ has $(p,\delta)$-structure, and let $\bfF$ be defined
  in~\eqref{eq:def_F}.  Then
  \begin{subequations}
    \label{eq:hammer}
    \begin{align}
      \label{eq:hammera}
      \big({\bfS}(\bfP) - {\bfS}(\bfQ)\big) \cdot \big(\bfP-\bfQ \big)
      &\sim \bigabs{ \bfF(\bfP) - \bfF(\bfQ)}^2
      \\
      &\sim \phi_{\abs{\bfP^\sym}}(\abs{\bfP^\sym - \bfQ^\sym})\,,
      \label{eq:hammerb}
      \\
      \label{eq:hammerc}
      &\sim \phi''\big( \abs{\bfP^\sym} + \abs{\bfP^\sym - \bfQ^\sym}
      \big)\abs{\bfP^\sym - \bfQ^\sym}^2
      \\
      \abs{\bfS(\bfP) - \bfS(\bfQ)} &\sim
      \phi'_{\abs{\bfP^\sym}}\big(\abs{\bfP^\sym -
        \bfQ^\sym}\big)\,.   \label{eq:hammere}  
      \intertext{uniformly in $\bfP, \bfQ \in \setR^{3 \times 3}$.
        Moreover,  uniformly in $\bfQ \in \setR^{3 \times 3}$,} 
      \label{eq:hammerd}
      \bfS(\bfQ) \cdot \bfQ &\sim \abs{\bfF(\bfQ)}^2 \sim
      \phi(\abs{\bfQ^\sym}).
    \end{align}
  \end{subequations}
  The constants depend only on the characteristics of $\bfS$.
\end{propo}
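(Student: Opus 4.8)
The plan is to reduce everything to the symmetric parts $\bfP^\sym,\bfQ^\sym$ (legitimate since $\bfS$ and $\bfF$ depend only on the symmetric part and vanish at $\bfzero$), to run the fundamental theorem of calculus along the segment $\mathbf{R}_\theta:=\bfQ^\sym+\theta(\bfP^\sym-\bfQ^\sym)$, $\theta\in[0,1]$, and to control the resulting averages by the elementary estimate
\[
  \int_0^1\bigl(\para+\abs{\mathbf{R}_\theta}\bigr)^{q}\,d\theta\ \sim\ \bigl(\para+\abs{\bfP^\sym}+\abs{\bfP^\sym-\bfQ^\sym}\bigr)^{q},\qquad q\in\setR\text{ fixed},
\]
with constants depending only on $q$, which I will use with $q=p-2$ and $q=\tfrac{p-2}{2}$. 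This segment estimate I would prove by distinguishing the case $\abs{\bfP^\sym-\bfQ^\sym}\le\tfrac12(\para+\abs{\bfP^\sym})$, in which $\abs{\mathbf{R}_\theta}\sim\abs{\bfP^\sym}$ uniformly in $\theta$, from the opposite case, in which on a set of $\theta$'s of measure $\sim1$ one has $\abs{\mathbf{R}_\theta}\gtrsim\abs{\bfP^\sym-\bfQ^\sym}$; keeping the constants independent of $\para$ is the one genuinely delicate point here. Throughout I also use $\abs{\bfP^\sym}+\abs{\bfP^\sym-\bfQ^\sym}\sim\abs{\bfQ^\sym}+\abs{\bfP^\sym-\bfQ^\sym}$ (triangle inequality) and $\varphi''(t)\sim(\para+t)^{p-2}$ from~\eqref{eq:phi''}.

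Granting this, I would first obtain~\eqref{eq:hammerc}. Writing $\bfS(\bfP)-\bfS(\bfQ)=\int_0^1\partial_\theta\bfS(\mathbf{R}_\theta)\,d\theta$ and pairing with $\bfP-\bfQ$ (equivalently with $\bfP^\sym-\bfQ^\sym$, as $\bfS$ is symmetric-valued), the lower and upper bounds in~\eqref{eq:ass_S} give
\[
  \bigl(\bfS(\bfP)-\bfS(\bfQ)\bigr)\cdot(\bfP-\bfQ)\ \sim\ \Bigl(\int_0^1\varphi''(\abs{\mathbf{R}_\theta})\,d\theta\Bigr)\abs{\bfP^\sym-\bfQ^\sym}^2,
\]
and the segment estimate together with $\varphi''(t)\sim(\para+t)^{p-2}$ identifies the bracket with $\varphi''\bigl(\abs{\bfP^\sym}+\abs{\bfP^\sym-\bfQ^\sym}\bigr)$ up to constants (the set of $\theta$ with $\mathbf{R}_\theta=\bfzero$ is negligible and $\varphi''(\abs{\mathbf{R}_\theta})$ is integrable in $\theta$, so the possible passage through $\bfzero$ is harmless). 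Then~\eqref{eq:hammerb} is immediate from $\varphi_a(t)\sim(\para+a+t)^{p-2}t^2\sim\varphi''(a+t)\,t^2$, i.e.\ Remark~\ref{rem:phi_a}(ii) combined with~\eqref{eq:equi1}.

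For the $\bfF$-part I would compute $D\bfF(\mathbf{R})$ from~\eqref{eq:def_F}: on $\setR^{3\times3}_\sym$ it is a symmetric operator equal to $(\para+\abs{\mathbf{R}})^{\frac{p-2}{2}}$ times the identity plus a rank-one correction in the direction $\mathbf{R}/\abs{\mathbf{R}}$, and a one-line computation shows that all its eigenvalues are comparable to $(\para+\abs{\mathbf{R}})^{\frac{p-2}{2}}$ for every $p\in(1,\infty)$ (for $p<2$ the correction is negative but its modulus is a fixed fraction of the leading term). Consequently $\abs{D\bfF(\mathbf{R})[\mathbf{H}]}\sim(\para+\abs{\mathbf{R}})^{\frac{p-2}{2}}\abs{\mathbf{H}}$ and $D\bfF(\mathbf{R})[\mathbf{H}]\cdot\mathbf{H}\sim(\para+\abs{\mathbf{R}})^{\frac{p-2}{2}}\abs{\mathbf{H}}^2$. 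Applying the fundamental theorem of calculus to $\bfF$ and the segment estimate with $q=\tfrac{p-2}{2}$, on the one hand $\abs{\bfF(\bfP)-\bfF(\bfQ)}\le\int_0^1\abs{D\bfF(\mathbf{R}_\theta)[\bfP^\sym-\bfQ^\sym]}\,d\theta\lesssim(\para+\abs{\bfP^\sym}+\abs{\bfP^\sym-\bfQ^\sym})^{\frac{p-2}{2}}\abs{\bfP^\sym-\bfQ^\sym}$, and on the other hand, by Cauchy--Schwarz (here the positive definiteness of $D\bfF$ is what makes the otherwise delicate lower bound painless),
\[
  \abs{\bfF(\bfP)-\bfF(\bfQ)}\ \ge\ \frac{1}{\abs{\bfP^\sym-\bfQ^\sym}}\int_0^1 D\bfF(\mathbf{R}_\theta)[\bfP^\sym-\bfQ^\sym]\cdot(\bfP^\sym-\bfQ^\sym)\,d\theta\ \gtrsim\ \bigl(\para+\abs{\bfP^\sym}+\abs{\bfP^\sym-\bfQ^\sym}\bigr)^{\frac{p-2}{2}}\abs{\bfP^\sym-\bfQ^\sym};
\]
squaring, $\abs{\bfF(\bfP)-\bfF(\bfQ)}^2\sim\varphi''\bigl(\abs{\bfP^\sym}+\abs{\bfP^\sym-\bfQ^\sym}\bigr)\abs{\bfP^\sym-\bfQ^\sym}^2$, which coincides with~\eqref{eq:hammerc} and hence establishes~\eqref{eq:hammera}.

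Finally, for~\eqref{eq:hammere} the same representation gives $\abs{\bfS(\bfP)-\bfS(\bfQ)}\le\int_0^1\abs{\partial_\theta\bfS(\mathbf{R}_\theta)}\,d\theta\lesssim\varphi''\bigl(\abs{\bfP^\sym}+\abs{\bfP^\sym-\bfQ^\sym}\bigr)\abs{\bfP^\sym-\bfQ^\sym}$ by the upper bound in~\eqref{eq:ass_S} and the segment estimate, while the matching lower bound follows from $\abs{\bfS(\bfP)-\bfS(\bfQ)}\ge\bigl(\bfS(\bfP)-\bfS(\bfQ)\bigr)\cdot(\bfP^\sym-\bfQ^\sym)/\abs{\bfP^\sym-\bfQ^\sym}$ and~\eqref{eq:hammerc}; since $\varphi''(a+t)\,t\sim\varphi'_a(t)$ (a consequence of~\eqref{eq:equi1} and the explicit form $\varphi'(s)=(\para+s)^{p-2}s$), this is exactly the assertion. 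The three equivalences in~\eqref{eq:hammerd} are the special case $\bfQ=\bfzero$ of~\eqref{eq:hammera}--\eqref{eq:hammerb}, using $\varphi_0=\varphi$ and $\varphi'(t)\,t\sim\varphi(t)$ from~\eqref{eq:equi2}. I expect the main obstacle to be the uniform-in-$\para$ proof of the segment estimate, together with the eigenvalue bound for $D\bfF$ in the subquadratic range $p<2$; once these are secured, the remainder is bookkeeping with the shifted N-functions $\varphi_a$.
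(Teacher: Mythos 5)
The paper does not prove this proposition at all: it is quoted from \cite{die-ett,dr-nafsa}, and your argument is, in all essentials, the proof given there — reduction to symmetric parts, fundamental theorem of calculus along $\mathbf{R}_\theta=\bfQ^\sym+\theta(\bfP^\sym-\bfQ^\sym)$, the two bounds in \eqref{eq:ass_S} to control $\partial_\theta\bfS(\mathbf{R}_\theta)$, the explicit differential of $\bfF$ with the eigenvalue bound (correct: for $p<2$ the rank-one correction has modulus at most $\tfrac{2-p}{2}<1$ times the leading term, so all eigenvalues are $\sim(\delta+\abs{\mathbf{R}})^{\frac{p-2}{2}}$), and the segment estimate to identify all the $\theta$-averages. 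The derivation of \eqref{eq:hammere} via Cauchy--Schwarz and the specialization to \eqref{eq:hammerd} are also fine.

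Two corrections are needed in the one lemma you only sketch. First, the segment estimate is \emph{not} valid ``for $q\in\setR$ fixed'': for $\delta=0$ and $q\le-1$ the left-hand side diverges whenever the segment crosses $\bfzero$. It holds exactly for $q>-1$, which your choices $q=p-2$ and $q=\tfrac{p-2}{2}$ satisfy because $p>1$; this restriction is precisely what makes the passage through $\bfzero$ harmless and must be recorded. Second, since $p<2$ both exponents you use are \emph{negative}, and for negative $q$ your dichotomy argues the wrong inequality in the second case: knowing $\abs{\mathbf{R}_\theta}\ge c\,\abs{\bfP^\sym-\bfQ^\sym}$ on a set of measure $\sim1$ only bounds the contribution of that set from above and says nothing about the complement, where $\abs{\mathbf{R}_\theta}$ may vanish. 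The correct mechanism is the pointwise bound $\abs{\mathbf{R}_\theta}\ge\abs{\theta-\theta_0}\,\abs{\bfP^\sym-\bfQ^\sym}$, where $\theta_0$ minimizes $\theta\mapsto\abs{\mathbf{R}_\theta}$ over $\setR$, which for $-1<q<0$ gives
\begin{equation*}
  \int_0^1\big(\delta+\abs{\mathbf{R}_\theta}\big)^q\,d\theta
  \le \int_{-1}^{1}\big(\abs{s}\,\abs{\bfP^\sym-\bfQ^\sym}\big)^q\,ds
  = \tfrac{2}{q+1}\,\abs{\bfP^\sym-\bfQ^\sym}^{q},
\end{equation*}
uniformly in $\delta\ge0$, while the matching lower bound is the trivial pointwise one coming from $\abs{\mathbf{R}_\theta}\le\abs{\bfP^\sym}+\abs{\bfP^\sym-\bfQ^\sym}$. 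With these two points repaired, the proof is complete and coincides with the one in the cited references.
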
 
For a detailed discussion of the
properties of $\T$ and $\F$ and their relation to Orlicz spaces and
N-functions we refer the reader to~\cite{dr-nafsa},~\cite{bdr-7-5}. We
just want to mention that
\begin{equation}
  \label{eq:hammerq}
  \abs{\bfF(\bfQ)} +\delta^{\frac p2} \sim \abs{\bfQ^\sym}^{\frac p2}
  +\delta^{\frac p2},
\end{equation}
with constants depending only on $p$.
\begin{remark}[Natural distance]
  \label{rem:natural_dist}
  {\rm  
    In view of the previous lemma we have, for all $\bfu, \bfw \in
  W^{1,\phi}(\Omega)$,
  \begin{align*}
    \skp{\bfS(\bD \bfu) \!-\! \bfS(\bD\bfw)}{\bD\bfu \!-\! \bD \bfw}
    &\sim 
    \norm{\bfF(\bD\bfu) \!-\! \bfF(\bD\bfw)}_2^2 \,\sim \int_\Omega\!
    \phi_{\abs{\bD\bfu}}(\abs{\bD\bfu \!-\! \bD\bfw}) \, d\bx.
  \end{align*}
  The constants depend only on the characteristics of $\bfS$.  The
  last expression equals the quasi-norm introduced in~\cite{BL1994b}
  raised to the power $\rho = \max \set{p,2}$. We refer
  to all three equivalent quantities as the \textit{natural distance}.
}
\end{remark}
Since in the following we shall insert into $\T $ and $\F$ only
symmetric tensors, we can drop in the above formulas the superscript
``$^\sym $'' and restrict the admitted tensors to symmetric ones.
\subsection{Description and properties of the boundary}
\label{sec:bdr} 
We assume that the boundary $\partial\O$ is of class $C^{2,1}$, that
is for each point $P\in\partial\O$ there are local coordinates such
that in these coordinates we have $P=0$ and $\partial\O$ is locally
described by a $C^{2,1}$-function, i.e.,~there exist
$R_P,\,R'_P,\,r_P\in (0,\infty)$ and a $C^{2,1}$-function
$a_P:B_{R_P}^{2}(0)\to B_{R'_P}^1(0)$ such that
\begin{itemize}
\item   [\rm (b1) ] $\bx\in \partial\O\cap (B_{R_P}^{2}(0)\times
  B_{R'_P}^1(0))\ \Longleftrightarrow \ x_3=a_P(x_1,x_2)$,
\item   [\rm (b2) ] $\Omega_{P}:=\{(x,x_{3}): x=(x_1,x_2)
 \in  B_{R_P}^{2}(0),\ a_P(x)<x_3<a_P(x)+R'_P\}\subset \Omega$, 
\item [\rm (b3) ] $\nabla a_P(0)=\bfzero,\text{ and }\forall\,x=(x_1,x_2)
  \in B_{R_P}^{2}(0)\quad |\nabla a_P(x)|<r_P$,
\end{itemize}
where $B_{r}^k(0)$ denotes the $k$-dimensional open ball with center
$0$ and radius $r>0$.  By a slightly abuse of notation, to simplify
further formulas, we set $R_{P}:=\min\{R_{P},R'_{P}\}$.  Note also
that $r_P $ can be made arbitrarily small if we make $R_P$ small
enough.

In the sequel we will also use, for $0<\lambda<1$, the following
scaled open sets, $\lambda\, \Omega_P\subset \Omega_P$ defined as
follows
\begin{equation}
  \label{eq:scaled_omega_P}
  \lambda\, \Omega_P:=\{(x,x_{3}): x=(x_1,x_2)
 \in
  B_{\lambda R_P}^{2}(0),\ a_P(x)<x_3<a_P(x)+\lambda R_P\}.
\end{equation}
%
%
%
%
To prove our global estimates we first show local estimates near the
boundary in $\Omega_P$, for every $P\in\bou$. To this end we fix smooth
functions $\xi_{P}:\setR^{3}\to\setR$ such that $0\leq\xi_P\leq1$ and 
\begin{equation*}
 \hspace*{-30mm}{\rm (\ell 1)}\hspace{40mm}
 \chi_{\frac{1}{2}\Omega_P}(\bx)\leq\xi_P(\bx)\leq \chi_{\frac{3}{4}\Omega_P}(\bx),
\end{equation*}
where $\chi_{A}(\bx)$ is the indicator function of the measurable set
$A$. 
For the remaining interior estimate we also localize by a smooth function
${0\leq\xi_{00}\leq 1}$ such that $\spt \xi_{00}\subset\Omega_{00}$,
where $\Omega_{00}\subset \Omega$ is an open set such that
$\dist(\partial\Omega_{00},\,\partial\Omega)>0$.  
The local estimates near the boundary are obtained in two steps. In
the first one (see Sections~\ref{sec:tan} and~\ref{sec:tanp}) we
estimate in $\Omega_P$ only tangential derivatives as defined
below. In the second one we use the new obtained information and
compute the normal derivatives from the system. Since the boundary
$\bou $ is compact, we can use an appropriate finite covering of it
which, together with the interior estimate, yields the global estimate.

Let us now introduce the tangential derivatives near the boundary and
related concepts. To simplify the notation we fix $P\in \bou$,
$h\in (0,\frac{R_P}{16})$, and simply write $\xi:=\xi_P$, $a:=a_P$. We
use the standard notation $\bx =(x',x_3)^\top$ and denote by
$\be^i,i=1,2,3$ the canonical orthonormal basis in $\setR^3$. In the
following lower-case Greek letters take values $1,\, 2$.

A crucial technicality to handle non-flat boundaries is to define a
proper way of differentiation (and approximate partial derivatives) in
directions that are tangential to the boundary, at least in a
tubular neighborhood of $\partial\Omega$. For a function $g$ (when $g$
is vector or tensor valued the same is applied to all components) with
$\spt g\subset\spt\xi$ we define  positive and
negative tangential translations:
\begin{equation*}
\begin{aligned}
  \trap g(x',x_3)&:=g\big (x' +
  h\,\be^\alpha,x_3+a(x'+h\,\be^\alpha)-a(x')\big )\,,
  \\
  \tran g(x',x_3)&:=g\big (x' - h\,\be^\alpha,x_3+a(x' -
  h\,\be^\alpha)-a(x')\big )\,;
\end{aligned}
\end{equation*}
tangential differences
\begin{equation*}
  \Delta^+ g:=\trap g-g,\qquad\Delta^- g:=\tran g-g\,;
\end{equation*}
and tangential divided differences
\begin{equation*}
  \difp g:= h^{-1}\Delta^+ g\,, \qquad \difn g:=h^{-1} \Delta^- g\,.
\end{equation*}
It holds
\begin{align} 
  \label{eq:1}
  \difp g \to\td g :=\partial_\alpha g +\partial_\alpha
  a\, \partial_3 g  \qquad \text{ as } h\to 0,
\end{align} 
almost everywhere in $\spt\xi$, if $ g \in W^{1,1}(\O)$
(cf.~\cite[Sec.~3]{MNR2001}). Moreover, we have for all %
 $1<q<\infty$, $ g \in W^{1,q}(\O)$ and all sufficiently small
$h>0$, that 
\begin{align}
  \label{eq:2}
  \exists\,c(a)>0:\quad \norm{\difp g }_{q,\spt\xi} \le
  c(a)\norm{\nabla g }_q .
\end{align}
Conversely, if $\norm{\difp g  }_{q,\spt\xi} \le C$ for all
sufficiently small $h>0$, then 
\begin{align}
  \label{eq:2a}
  \norm {\partial _\tau  g }_{q,\spt\xi} \le C.
\end{align}

Now we formulate some auxiliary lemmas related to these objects. The
first lemma clarifies the fact that tangential translations and
tangential differences do not non commute with partial
derivatives. Also the explicit expressions can be used to
quantitatively estimate the so called commutation terms, as called in
turbulence theory~\cite{Ber-Gri-John2007}. For
simplicity we denote $\nabla a:=(\partial_1a,\partial_{2}a, 0)^\top$
and use the operations $\trap {(\cdot)}$, $\tran {(\cdot)}$,
$\Delta^+(\cdot) $, $\Delta^+(\cdot) $, $\difp {(\cdot)}$ and $\difn
{(\cdot)}$ also for vector-valued and tensor-valued functions,
intended as acting component-wise.
\begin{lemma}
  \label{lem:TD1} 
  Let $\bv\in W^{1,1}(\Omega)$ such that $\spt \bv
  \subset\spt\xi$. Then 
\begin{equation*}
\begin{aligned}
  \nabla\difpm \bv &=\difpm{\nabla \bv }+\trap{(\partial_3 \bv
    )}\otimes\difpm{\nabla a},
  \\
  \bD\difpm \bv &=\difpm{\bD \bv }+\trap{(\partial_3 \bv
    )}\otimess\difpm{\nabla a},
  \\
  \diver\difpm \bv &=\difpm\diver \bv +\trapm{(\partial_3 \bv
    )}\difpm{\nabla a}
  \\
  \nabla \bv _{\pm\tau} &= (\nabla \bv )_{\pm\tau} + \trapm{(\partial_3 \bv
    )}\difpm{\nabla a},
\end{aligned}
\end{equation*}
where $\otimess$ is defined component-wise also for scalar and tensor-valued functions.
\end{lemma}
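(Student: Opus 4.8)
The plan is to prove Lemma~\ref{lem:TD1} by direct computation starting from the definitions of $\trap{(\cdot)}$ and $\tran{(\cdot)}$, reducing everything to the first identity for $\nabla \difpm \bv$ and then obtaining the others as algebraic consequences. Since all operators act component-wise, it suffices to treat a scalar function $v \in W^{1,1}(\Omega)$ with $\spt v \subset \spt \xi$; moreover, by a density argument it is enough to prove the identities for $v \in C^1$ (indeed $C^\infty_0$) and then pass to the limit, using~\eqref{eq:2} to control the tangential divided differences and the fact that $a \in C^{2,1}$ so that $\nabla a$ and its divided differences are bounded. So the first step is this reduction to smooth $v$.

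The second, and main, step is the chain rule computation for $\trap v$. Write $\Phi^+(x',x_3) := \big(x' + h\be^\alpha,\, x_3 + a(x'+h\be^\alpha) - a(x')\big)$, so that $\trap v = v \circ \Phi^+$. Differentiating with respect to $x_\beta$ (for $\beta = 1,2$) gives, by the chain rule,
\begin{equation*}
  \partial_\beta (\trap v) = (\partial_\beta v)\circ\Phi^+ + \big((\partial_3 v)\circ\Phi^+\big)\,\big(\partial_\beta a(x'+h\be^\alpha) - \partial_\beta a(x')\big) = \trap{(\partial_\beta v)} + \trap{(\partial_3 v)}\,\Delta^+(\partial_\beta a),
\end{equation*}
while differentiation with respect to $x_3$ simply gives $\partial_3(\trap v) = \trap{(\partial_3 v)}$ because $\Phi^+$ has trivial $x_3$-dependence in the first two slots and derivative $1$ in the third. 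Recalling $\nabla a = (\partial_1 a, \partial_2 a, 0)^\top$ and that the third component of $\nabla a$ vanishes, these two computations combine into $\nabla (\trap v) = \trap{(\nabla v)} + \trap{(\partial_3 v)}\,\Delta^+(\nabla a)$, i.e.\ the fourth identity of the lemma with a $+$ sign; the $-$ sign is identical with $\be^\alpha$ replaced by $-\be^\alpha$. Dividing by $h$ turns $\Delta^+(\nabla a)$ into $\difp{(\nabla a)}$, which is exactly the last displayed identity of the lemma (in $h^{-1}$ scaled form it reads $\nabla \bv_{\pm\tau} = (\nabla \bv)_{\pm\tau} + \trapm{(\partial_3 \bv)}\difpm{(\nabla a)}$ once both sides are consistent with the divided-difference normalization used in the statement).

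The third step is purely formal bookkeeping. From $\nabla(\trap \bv) = (\nabla \bv)_\tau + \trap{(\partial_3 \bv)}\otimes\difp{(\nabla a)}$ (the tensor product appearing because $\bv$ is vector-valued and $\nabla a$ is a vector) I subtract $\nabla \bv$ and use $\Delta^+ \bv = \trap \bv - \bv$ to get
\begin{equation*}
  \nabla \Delta^+ \bv = \Delta^+(\nabla \bv) + \trap{(\partial_3 \bv)}\otimes \Delta^+(\nabla a),
\end{equation*}
and dividing by $h$ yields the first identity $\nabla\difp\bv = \difp{(\nabla \bv)} + \trap{(\partial_3 \bv)}\otimes\difp{(\nabla a)}$; the $\difn$ case is the same. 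Taking the symmetric part of both sides, and noting that symmetrization commutes with $\Delta^\pm$, $\difpm{}$, $\partial_3$ and with translation, gives the $\bD$-identity with $\otimess$ in place of $\otimes$. Taking the trace gives the divergence identity: $\operatorname{tr}(\trap{(\partial_3\bv)}\otimes\difpm{(\nabla a)}) = \trap{(\partial_3 \bv)}\cdot\difpm{(\nabla a)}$, which in components is $\trapm{(\partial_3 \bv)}\difpm{(\nabla a)}$ with summation over the repeated index, matching the stated formula. Finally I return to general $W^{1,1}$ functions by density, using~\eqref{eq:1}--\eqref{eq:2} to justify that all terms converge in $L^1_{\mathrm{loc}}(\spt\xi)$.

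I do not expect a serious obstacle here: the only mild subtleties are (a) keeping the tensorial structure straight — an honest $\otimes$ appears in the gradient identity because $\nabla a$ is a genuine vector, and it becomes $\otimess$ exactly upon symmetrization — and (b) making sure the regularity hypotheses are used correctly, namely that $a \in C^{2,1}$ is what legitimizes writing $\Delta^\pm(\nabla a)$ and $\difpm{(\nabla a)}$ as bounded functions and lets the density argument go through. The support condition $\spt \bv \subset \spt \xi$ is needed only so that $\Phi^\pm \bv$ is well defined (the argument $x' \pm h\be^\alpha$ stays in the chart domain for $h < R_P/16$) and so that no boundary terms intrude.
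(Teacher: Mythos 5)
The paper offers no proof of this lemma at all --- it declares that ``all assertions from this section may be proved by easy manipulations of definitions'' --- and your chain-rule computation for $\trap{v}=v\circ\Phi^{+}$, followed by subtraction of $v$, symmetrization, and taking the trace, is exactly the intended elementary argument; it is correct, including the reduction to smooth $v$ and the remark that $\spt\bv\subset\spt\xi$ together with $h<R_P/16$ keeps $\Phi^{\pm}$ inside the chart. Your hedge about the normalization of the fourth identity is also justified: the computation yields $\nabla\bv_{\pm\tau}=(\nabla\bv)_{\pm\tau}+\trapm{(\partial_3\bv)}\,\Delta^{\pm}(\nabla a)$, so the $\difpm{(\nabla a)}$ printed there (and the uniform $\trap{(\cdot)}$ in the first two identities, which should read $\trapm{(\cdot)}$ for the minus case) are notational slips in the statement, not defects of your proof.
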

The second lemma is devoted to the relation between tangential
differences and tangential translations, provided that $h$ is small
enough. 
\begin{lemma}
  \label{lem:TD2}
  Let $\spt g \subset\spt\xi$. Then
\begin{equation*}
  \trap{(\difn g )}=-\difp g ,\quad \tran{(\difp g )}=-\difn g , \quad 
  \difn  g_\tau = - \difp g .
\end{equation*}
\end{lemma}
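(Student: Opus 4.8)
The whole content of the lemma is the observation that, for $h$ small enough, the positive and negative tangential translations are mutually inverse on functions supported in $\spt\xi$, i.e.
\[
  \trap{(\tran g)} = \tran{(\trap g)} = g \qquad\text{on }\setR^3,
\]
and that the operators $\trap{(\cdot)}$ and $\tran{(\cdot)}$ are linear (being compositions with a fixed change of variables). The three stated identities then follow immediately. So I would first establish the inverse relation, then deduce the rest.

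For the inverse relation I would argue by direct substitution. Using the definition $\tran g(y',y_3)=g\big(y'-h\,\be^\alpha,\,y_3+a(y'-h\,\be^\alpha)-a(y')\big)$ and evaluating it at the point $y'=x'+h\,\be^\alpha$, $y_3=x_3+a(x'+h\,\be^\alpha)-a(x')$, the horizontal shift cancels (first argument becomes $x'$) and the two vertical increments $a(x'+h\,\be^\alpha)-a(x')$ and $a(x')-a(x'+h\,\be^\alpha)$ cancel, so $\trap{(\tran g)}(x',x_3)=g(x',x_3)$; the symmetric computation gives $\tran{(\trap g)}=g$. The only point requiring care is the domain of definition: for $\bx\in\spt\xi\subset\frac34\Omega_P$ and $h<\frac{R_P}{16}$ all intermediate points $(x'\pm h\,\be^\alpha,\cdot)$ stay inside the chart $B^2_{R_P}(0)\times B^1_{R'_P}(0)$ on which $a$ is defined, so the compositions make sense; for $\bx$ outside $\spt g$ (extended by zero) both sides vanish. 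Hence the identity holds pointwise on all of $\setR^3$.

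Granting this, the first identity is
\[
  \trap{(\difn g)} = h^{-1}\big(\trap{(\tran g)} - \trap g\big) = h^{-1}(g-\trap g) = -\difp g,
\]
using that $\trap{(\cdot)}$ commutes with multiplication by the constant $h^{-1}$ and with the subtraction of two functions. The second identity follows in the same way from $\tran{(\difp g)} = h^{-1}\big(\tran{(\trap g)} - \tran g\big) = h^{-1}(g-\tran g) = -\difn g$. The third is just the second rewritten: since $g_\tau=\trap g$, one has $\difn g_\tau = \difn{(\trap g)} = h^{-1}\big(\tran{(\trap g)} - \trap g\big) = -\difp g$.

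I do not expect any genuine obstacle: the argument is a change-of-variables cancellation plus the linearity of translation operators. The only mildly delicate point is the bookkeeping of the support and chart restrictions that legitimizes composing $\trap{(\cdot)}$ and $\tran{(\cdot)}$ freely — which is precisely the role of the standing assumption $h<\frac{R_P}{16}$ fixed above.
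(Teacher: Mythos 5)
Your proof is correct and is exactly the ``easy manipulation of definitions'' that the paper invokes when it omits the proofs of the lemmas in this section: the direct-substitution check that $\trap{(\tran g)}=\tran{(\trap g)}=g$ (with the vertical increments of $a$ cancelling), followed by linearity of the translation operators, is the intended argument. The attention you pay to the chart and support restrictions for small $h$ is appropriate and complete.
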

The following variant of integration per parts will be often used.
\begin{lemma}
  \label{lem:TD3}
  Let $\spt g\cup\spt f\subset\spt\xi$ and $h$ small enough. Then
  \begin{equation*}
    \intO f\tran g \, d\bx =\intO\trap f g\, d\bx.
  \end{equation*}
  Consequently, $\intO f\difp g \, d\bx= \intO(\difn f )g\, d\bx$.
\end{lemma}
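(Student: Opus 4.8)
The statement to prove is Lemma~\ref{lem:TD3}: a change-of-variables identity for the tangential translation operators, plus its consequence for tangential divided differences.

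\medskip

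\noindent\textbf{Proof plan.} The plan is to prove the first identity by a direct change of variables in the integral and then derive the second identity as a formal algebraic consequence. First I would write out the definition of $\tran g$ explicitly, so that
\begin{equation*}
  \intO f(\bx)\, \tran g(\bx)\, d\bx = \intO f(x',x_3)\, g\bigl(x'-h\be^\alpha,\, x_3 + a(x'-h\be^\alpha)-a(x')\bigr)\, d\bx .
\end{equation*}
Then I would introduce the substitution $\by = (y',y_3)$ with $y' := x' - h\be^\alpha$ and $y_3 := x_3 + a(x'-h\be^\alpha) - a(x')$. The key point is that this map is a bijection of $\setR^3$ (with inverse $x' = y'+h\be^\alpha$, $x_3 = y_3 + a(y'+h\be^\alpha)-a(y') = y_3 + a(y')\big|_{\text{shifted}} - a$, i.e.\ exactly the expression defining $\trap{(\cdot)}$ evaluated at $\by$), and that its Jacobian determinant is identically $1$: the map is a shift in $x'$ followed, for each fixed $x'$, by a pure translation in the $x_3$-variable, so the Jacobian matrix is lower/upper triangular with $1$'s on the diagonal. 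Hence $d\bx = d\by$, and after substitution the integrand becomes $f(y'+h\be^\alpha, y_3 + a(y'+h\be^\alpha)-a(y'))\, g(y') = \trap f(\by)\, g(\by)$, which gives
\begin{equation*}
  \intO f\,\tran g\, d\bx = \intO \trap f\, g\, d\bx .
\end{equation*}

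\medskip

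For this to be legitimate I need the support condition: since $\spt f \cup \spt g \subset \spt\xi \subset \tfrac34\Omega_P$ and $h < R_P/16$ is small enough, both $\tran g$ and $\trap f$ have support inside $\Omega_P$ (using (b2), $\Omega_P\subset\Omega$, so the shifted points along the graph stay in $\Omega$ and the extension by zero outside is harmless); thus all integrals over $\O$ are really integrals over a fixed compact subset of $\O$ on which the change of variables is a genuine diffeomorphism of a neighborhood, and no boundary terms appear. I would remark that it suffices to check the identity for $f,g \in C_0^\infty$ and then pass to $L^1$--$L^\infty$ (or more generally dual Lebesgue exponents) pairs by density, the change-of-variables formula being stable under such limits.

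\medskip

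Finally, for the consequence, I would apply the first identity with $g$ replaced by $\difp g = h^{-1}(\trap g - g)$ and also use it with the roles adjusted: writing
\begin{equation*}
  \intO f\, \difp g\, d\bx = h^{-1}\Bigl(\intO f\, \trap g\, d\bx - \intO f\, g\, d\bx\Bigr),
\end{equation*}
then applying the established identity in the form $\intO f\,\trap g\,d\bx = \intO \tran f\, g\, d\bx$ (which is the first identity read with $f\leftrightarrow g$ and $\tau\leftrightarrow -\tau$, again valid by the same change of variables with $h\mapsto -h$) gives $\intO f\,\difp g\, d\bx = h^{-1}\intO(\tran f - f)\, g\, d\bx = \intO (\difn f)\, g\, d\bx$, as claimed. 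The only genuine obstacle is bookkeeping the support/smallness of $h$ so that the coordinate change is well defined and no mass escapes $\O$; the Jacobian computation and the algebra are routine.
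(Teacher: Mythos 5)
Your proof is correct and is exactly the "easy manipulation of definitions" the paper has in mind (the paper omits all proofs in this subsection): the change of variables $y'=x'-h\be^\alpha$, $y_3=x_3+a(x'-h\be^\alpha)-a(x')$ is a shear with unit Jacobian whose inverse produces $\trap f$, the support condition with $h$ small keeps everything inside $\Omega_P$, and the divided-difference identity follows by swapping the roles of $f$ and $g$ in the translation identity. No gaps.
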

Also the following variant of the product rule will be used.
\begin{lemma}
  \label{lem:TD4}
  Let $\spt g\cup\spt f\subset\spt\xi$. Then
  \begin{equation*}
  \difpm (f g) = f_{\pm\tau} \,\difpm g + (\difpm f )\, g.
\end{equation*}
\end{lemma}
\medskip

If $\bS$ has $(p,\delta)$-structure we easily obtain from
Lemma~\ref{lem:hammer} the following equivalences
\begin{align}
  \begin{aligned}
    \label{eq:3}
    \abs{\difp \T (\bD\bu)} &\sim
    (\delta+|\bD\bu|+|\Delta^+{\bD\bu}|)^{p-2}|\difp \bD\bu|
    \\
    &\sim \phi''(|\bD\bu|+|\Delta^+{\bD\bu}|) |\difp \bD\bu|
    \\
    &\sim \big (\phi''(|\bD\bu|+|\Delta^+{\bD\bu}|) \big)^\frac
    12\abs{\difp \F (\bD\bu)}\,
    \\
    & \sim (\delta+|\bD\bu|+|\Delta^+{\bD\bu}|)^{\frac {p-2}2}|\difp
    \bF(\bD\bu)|\,,
  \end{aligned}
\end{align}    
\vspace{-3mm}
\begin{align}
  \hspace*{-9.7mm}
  \begin{aligned}
    \label{eq:3a}
    \difp \T (\bD\bu)\cdot \difp \bD\bu &\sim \abs{\difp \F (\bD\bu)}^2
    \\
    &\sim (\delta+|\bD\bu|+|\Delta^+\bD\bu|)^{p-2}|\difp \bD\bu|^2
    \\
    &\sim \phi''(|\bD\bu|+|\Delta^+{\bD\bu}|) |\difp \bD\bu|^2,
  \end{aligned}
\end{align}
with constants depending only on the characteristics of $\bS$ and $p$.
All assertions from this section may be proved by easy manipulations
of definitions and we drop their proofs.
\subsection{Main Results}
Now we can formulate our main results concerning the regularity
properties of weak solutions to problems~\eqref{eq}, with different
assumptions on the stress tensor. We especially focus on the two
different cases in which there is a part associated with the quadratic
growth or in which this is lacking.
\begin{theorem}
  \label{thm:MT1}
  Let $\T$ the extra stress tensor in~\eqref{eq} be given by
  $\bS=\bS^0 + \bS^1$, where $\bS^0$ satisfies Assumption~\ref{ass:1}
  with $p=2$ and $\bS^1$ satisfies Assumption~\ref{ass:1} for some
  $p\in (1,2)$, and $\delta \in (0,\infty)$.  Let $\bF$ be the
  associated tensor field to $\bS^1$. Let $\O\subset\eR^3$ be a
  bounded domain with boundary $\partial\Omega$ of class $C^{2,1}$,
  and let $\ff\in L^2(\O)$. Then, the unique weak solution $(\bu,\pi)\in
  W^{1,2}_{0,\divo}(\O)\times L^{2}_{0}(\O)$ of the problem ~\eqref{eq} satisfies
  \begin{equation*}
    \begin{split}
      \label{eq:est-reg-F1}
      \intO\abs{\nabla ^2 \bu}^2+\abs{\nabla
        \F(\bD\bu)}^2+\abs{\nabla\pi}^2 \, d\bx &\le c \,,
    \end{split}
  \end{equation*}  
  where the constant $c$ depends on $\|\ff\|_2$, on the characteristics
  of $\bS^0$, $\bS^1$, on $\delta^{-1}$, and on $\bou$. 
\end{theorem}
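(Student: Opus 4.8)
The plan is to obtain the estimate as the culmination of three localized arguments --- tangential near the boundary, normal near the boundary, and interior --- which are then patched together using a finite covering of $\partial\Omega$. The starting point is the weak formulation: $(\bu,\pi)$ satisfies $\intO \bS(\bD\bu)\cdot\bD\bfvarphi\,d\bx - \intO \pi\,\divo\bfvarphi\,d\bx = \intO \ff\cdot\bfvarphi\,d\bx$ for all $\bfvarphi\in W^{1,2}_0(\O)$. Fix $P\in\partial\O$, write $\xi=\xi_P$, $a=a_P$, and for small $h$ use the tangential divided difference test function $\bfvarphi = \difn\big(\xi^2\,\difp\bu\big)$ --- or rather a divergence-corrected version of it, since $\difp\bu$ is not divergence-free on account of the commutator term in Lemma~\ref{lem:TD1}: one subtracts a Bogovski\u\i{} correction $\Bog(\divo(\xi^2\difp\bu))$ to restore the constraint and thereby eliminate the pressure from the leading-order identity. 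The left-hand side then produces, via~\eqref{eq:3a} and Lemma~\ref{lem:hammer}, the coercive quantity $\int \xi^2\,\abs{\difp\bF(\bD\bu)}^2\,d\bx$ (this is where the $\bS^0$, $p=2$, part gives in addition full control of $\int\xi^2\abs{\difp\bD\bu}^2$, which is the source of the optimal second-derivative bound), while the remaining terms split into: (a) the right-hand side $\ff$ paired against the test function, controlled by $\|\ff\|_2$ and Young's inequality; (b) commutator terms coming from $\nabla\difpm$, $\bD\difpm$, $\divo\difpm$ not commuting, estimated using $\abs{\difpm\nabla a}\le\|a\|_{C^{1,1}}h$ and~\eqref{eq:2}; and (c) the error from the Bogovski\u\i{} correction, which by the commutator identities involves only $\difp\nabla a\cdot\partial_3\bu$-type expressions and is again absorbable.

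After absorbing everything absorbable into the left, one gets $\int \xi^2\abs{\difp\bF(\bD\bu)}^2 + \int\xi^2\abs{\difp\bD\bu}^2 \le c(\|\ff\|_2,\delta^{-1},\partial\O)$ uniformly in $h$; passing $h\to0$ and invoking~\eqref{eq:2a} yields $\partial_\tau\bF(\bD\bu)\in L^2(\tfrac12\Omega_P)$ and $\partial_\tau\bD\bu,\ \partial_\tau\nabla\bu\in L^2(\tfrac12\Omega_P)$ for the two tangential directions $\tau\in\{1,2\}$. The second step recovers the normal derivative $\partial_3$. Here one uses the PDE itself pointwise (now justified, since the tangential regularity makes enough of $\nabla\bS(\bD\bu)$ lie in $L^2$): the equation $-\divo\bS(\bD\bu)+\nabla\pi=\ff$ together with $\divo\bu=0$ gives an algebraic system for the missing normal derivatives of $\bD\bu$ and for $\partial_3\pi$ in terms of the already-controlled tangential derivatives and $\ff$; invertibility of this system (uniformly, using the ellipticity constant, which with $\bS^0$ present is genuinely the $p=2$ ellipticity and hence the system is uniformly non-degenerate) delivers $\partial_3\bF(\bD\bu), \partial_3\nabla\bu, \partial_3\pi\in L^2(\tfrac12\Omega_P)$. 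Combined with the tangential bounds this gives the full local estimate $\int_{\frac12\Omega_P}\abs{\nabla^2\bu}^2+\abs{\nabla\bF(\bD\bu)}^2+\abs{\nabla\pi}^2\,d\bx\le c$. The interior estimate with cutoff $\xi_{00}$ is the same argument with ordinary difference quotients and no boundary correction in the translations, hence easier. Finally, compactness of $\partial\O$ gives a finite subcover $\{\tfrac12\Omega_{P_k}\}\cup\Omega_{00}$, and summing the local estimates yields the global one; the pressure estimate is completed by noting $\pi\in L^2_0$ and the just-obtained $\nabla\pi\in L^2$.

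The main obstacle --- and the point requiring the most care --- is step two in the $\mu_0=0$ regime, but since the present theorem assumes $\bS^0$ with $p=2$ is present, that difficulty is circumvented: the algebraic system for the normal derivatives is here uniformly elliptic, so its inversion costs only a constant depending on $\kappa_0,\kappa_1$ of $\bS^0$. Within step one, the genuinely delicate bookkeeping is the simultaneous handling of the three commutator sources (from $\bD\difpm$, from $\divo\difpm$, and from the Bogovski\u\i{} correction): each is individually small in $h$ or controlled by $\|a\|_{C^{1,1}}$, but one must be sure every term lands either on the coercive side (to be absorbed with a small $\varepsilon$) or on a side bounded purely by $\|\ff\|_2$ and geometric data, with no circularity --- in particular the $\delta^{-1}$ dependence enters precisely through the equivalence between the $\varphi$-modular and the $L^p$ (or $L^2$) norm on the bounded domain, as flagged in Remark~\ref{rem:delta0}. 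Once the covering is fixed, no new analytic input is needed, so essentially all the work is in making the local near-boundary estimate airtight.
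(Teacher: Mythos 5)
Your overall architecture (tangential estimate near the boundary, then normal derivatives from the equation pointwise, then interior estimate and a finite covering) is the same as the paper's, and the coercivity mechanism you identify — the $p=2$ part of $\bS$ giving full control of $\int\xi^2|\difp\bD\bu|^2$ — is exactly what the paper exploits. Your step one deviates in one respect: you propose to restore the divergence constraint by a Bogovski\u{\i} correction so that the pressure drops out of the tangential identity, whereas the paper keeps the pressure, tests the formulation~\eqref{eq:eq-tlak} with $\bv=\difn(\xi^2\difp\bu_{|\widetilde\Omega_P})$, and estimates the resulting pressure terms $I_8$--$I_{14}$ directly using the a priori bound $\|\pi\|_2\le c(1+\|\ff\|_2^2)^{1/2}$ from Lemma~\ref{lem:ism}. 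Your variant is workable for the velocity part, since $\|\nabla\Bog(\divo(\xi^2\difp\bu))\|_2$ is controlled by lower-order quantities via Lemma~\ref{lem:TD1} and Proposition~\ref{thm:bogovskii}.

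The genuine gap is in step two. Eliminating the pressure from the tangential identity does not eliminate it from the problem: the algebraic system for the normal derivatives is obtained from the $\alpha$-components of the equation, whose right-hand side $\mathfrak f_\alpha$ contains $\partial_\alpha\pi=\partial_{\tau_\alpha}\pi-\partial_\alpha a\,\partial_3\pi$. The term $\partial_\alpha a\,\partial_3\pi$ can be re-expressed through the third equation and absorbed (this is where the smallness $r_P<C_2$ of the charts is needed — a condition you do not mention but which is essential to absorb the $\|\nabla a\|_\infty|\nabla^2\bu|$ terms on a non-flat boundary), but $\partial_{\tau_\alpha}\pi$ must be known to lie in $L^2(\spt\xi_P)$ \emph{before} the normal step can close. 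Your scheme never establishes this: by design you removed the pressure from the tangential identity, and "already-controlled tangential derivatives" in your step two silently includes $\partial_\tau\pi$, which is not controlled. The paper supplies this missing ingredient as estimate~\eqref{eq:tang-final-pres} of Proposition~\ref{prop:tangential1}, proved by a separate inf-sup (negative-norm) argument: one writes $\intO\xi\difp\pi\,\divo\bpsi\,d\bx$ via the weak formulation for arbitrary $\bpsi\in W^{1,2}_0(\Omega)$, bounds the resulting terms $J_1,\dots,J_6$ using the just-obtained tangential velocity estimates (this is where a factor $\delta^{p-2}$ enters), and applies Lemma~\ref{lem:ism}. You would need to reinstate this step — i.e., return to the formulation with pressure and run the duality argument — before your normal-direction system can be closed.
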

A more precise dependence of various quantities in terms of $\delta$
is given in the proof of the theorem.  Note that we obtain the same
regularity on $(\bu,\pi)$ as in the case of the (linear) Stokes
system, namely
\begin{equation*}
  \bu\in W^{2,2}(\Omega)\quad\text{and}\quad \pi\in
  W^{1,2}(\Omega). 
 \end{equation*}


Let us consider now the case in which there is only the nonlinear part
of the stress tensor.
\begin{theorem}
  \label{thm:MT2}
  Let the extra stress tensor $\T$ in~\eqref{eq} satisfy
  Assumption~\ref{ass:1} for some $p \in (1,2)$, and $\delta \in
  (0,\infty)$, and let $\bF$ be the associated tensor field to
  $\bS$. Let $\O\subset\eR^3$ be a bounded domain with $C^{2,1}$
  boundary and let $\ff\in L^{p'}(\O)$. Then, the unique weak solution
  $\bu\in W^{1,p}_{0,\divo}(\O)$ of the problem~\eqref{eq} satisfies
  \begin{equation*}
    \begin{aligned}
      \label{eq:est-reg-F2}
      \intO  \xi_{0}^2 \abs{\nabla \F(\bD\bu)}^2+
      \xi_{0}^2 \abs{\nabla\pi}^2 \, d\bx
      &\le c
      (\norm{\ff}_{p'},\norm{\xi_0}_{2,\infty},\delta^{-1}) \,, 
      \\
      \intO\xi^2 \abs{\td \F(\bD\bu)}^2+\xi^2 \abs{\td \pi}^2\,
      d\bx
      &\le c (\norm{\ff}_{p'},
      \norm{\xi}_{2,\infty},\norm{a}_{C^{2,1}},\delta^{-1}) \,,
      \\
      \intO\xi^{\frac {8p-4}{3p}} \abs{\tn \F(\bD\bu)}^{\frac
        {8p-4}{3p}}\! +\xi^{\frac {8p-4}{p}} \abs{\F(\bD\bu)}^{\frac
        {8p-4}{p}} \, d\bx 
      &\le c (\norm{\ff}_{p'},\norm{\xi}_{2,\infty},\norm{a}_{C^{2,1}},\delta^{-1}) \,,
      \\
      \int_\Omega \xi^{p} \abs{\tn \pi}^{\frac {4p-2}{p+1}} \, d\bx &\le c
      (\norm{\ff}_{p'},\norm{\xi}_{2,\infty},\norm{a}_{C^{2,1}},\delta^{-1}) \,.
    \end{aligned}
  \end{equation*}  
  Here $\xi_{0}$ is a cut-off function with support in the interior of
  $\Omega$, while $\xi=\xi_{P}$ is a cut-off function with support near to %
  the boundary $\partial \Omega$, as defined in Sec.~\ref{sec:bdr}.
The tangential derivative
  $\partial_\tau$ is defined locally in $\Omega_P$ by~\eqref{eq:1}. This in
  particular implies that $\bF(\bD\bu)\in L^{\frac{8p-4}{p}}(\Omega)$
  and, in terms of derivatives of $\bu$, that
  \begin{equation*}
    \label{eq:est-reg-sob}
      \xi \td \nabla \bu \in L^{\frac {8p-4}{3p}}(\Omega),\quad \xi
      \tn \nabla \bu \in L^{\frac {4p-2}{p+1}}(\Omega), \quad \nabla
      \bu \in L^{{4p-2}}(\Omega),\quad       \nabla ^{2}\bu\in L^{\frac {4p-2}{p+1}}(\Omega)\,,
  \end{equation*}
while the partial derivatives of the pressure satisfy
\begin{equation*}
  \xi\partial_{\tau}\pi\in L^{2}(\Omega),\quad \xi\partial_{3}\pi\in
  L^{{\frac {4p-2}{p+1}}}(\Omega),\quad \pi\in W^{1,{\frac {4p-2}{p+1}}}(\Omega).
\end{equation*}
\end{theorem}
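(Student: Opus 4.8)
The plan is to prove Theorem~\ref{thm:MT2} following the two-step localization scheme announced in Section~\ref{sec:bdr}: first derive tangential regularity in each chart $\Omega_P$ (and, separately, the easier interior estimate with $\xi_0$), then recover the normal derivatives and the pressure from the equation. For the tangential step I would test the weak formulation of~\eqref{eq} with the difference quotient $\difn(\xi^2\difp\bu)$ — more precisely, a divergence-corrected version of it, since $\difp\bu$ is not solenoidal by Lemma~\ref{lem:TD1} — and use the Bogovski\u\i{} operator to handle the pressure term. The left-hand side produces, via Lemma~\ref{lem:hammer} and the equivalences~\eqref{eq:3}--\eqref{eq:3a}, the natural-distance quantity $\int_\Omega\xi^2\abs{\difp\bF(\bD\bu)}^2\,d\bx$; Lemmas~\ref{lem:TD1}--\ref{lem:TD4} are used to move difference quotients around and to control the commutator terms coming from the non-flat chart function $a$, where the bound $\abs{\nabla a}<r_P$ (which can be taken small) absorbs the dangerous terms into the left-hand side. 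Sending $h\to0$ and invoking~\eqref{eq:2a} gives $\xi\,\partial_\tau\bF(\bD\bu)\in L^2$, hence by~\eqref{eq:3} the estimate on $\xi\,\partial_\tau\nabla\bu$ in $L^{(8p-4)/3p}$; the bound on $\xi\,\partial_\tau\pi$ in $L^2$ follows by duality/Nečas from the equation once the tangential stress derivative is controlled.

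The key new device, emphasized in the introduction as what removes the restriction $p>\tfrac32$, is to recover the normal derivatives not by solving an algebraic system for $\partial_3\bu$ directly, but for an intrinsic combination built out of $\bfS(\bD\bu)$. Concretely: from the first line of~\eqref{eq} the combination $-\partial_3 S_{i3}(\bD\bu)+\partial_i\pi$ equals $-f^i$ minus the purely tangential derivatives of $\bfS$, which are already controlled by Step~1; using $\divo\bu=0$ to express $\partial_3 u^3$ tangentially, and the structure~\eqref{eq:ass_S} of $\partial_{kl}S_{ij}$ (a matrix comparable to $\phi''(\abs{\bD\bu})$ times the identity on the relevant subspace), one solves pointwise for the remaining normal components. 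The degeneracy $\phi''(\abs{\bD\bu})\sim(\delta+\abs{\bD\bu})^{p-2}$ forces the gain to be measured in $\bF$ rather than in $\nabla\bu$, and this is exactly why the exponents in the statement are $(8p-4)/p$, $(8p-4)/3p$, $(4p-2)/(p+1)$ rather than $2$: one interpolates the $L^2$-tangential control of $\bF(\bD\bu)$ (which via~\eqref{eq:hammerq} is an $L^{2}$ bound on $\abs{\bD\bu}^{p/2}$, i.e. an $L^p$-type bound on $\nabla\bu$ with one tangential derivative) against the basic energy estimate, using the Sobolev embedding in the two tangential variables, to upgrade integrability before paying for the normal direction. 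The pressure estimates $\xi\,\partial_3\pi\in L^{(4p-2)/(p+1)}$ and $\pi\in W^{1,(4p-2)/(p+1)}$ then come by reading $\partial_i\pi$ off the equation once $\partial_3\bfS(\bD\bu)$ is known with this integrability.

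I expect the main obstacle to be Step~2 in the full range $p\in(1,2)$: the matrix $\partial_{kl}S_{ij}(\bD\bu)$ appearing in the algebraic system is only controlled from below by $\kappa_0\phi''(\abs{\bD\bu})$ on symmetric test directions, and near points where $\abs{\bD\bu}$ is large this weight is small (since $p-2<0$), so inverting the system costs a factor $\phi''(\abs{\bD\bu})^{-1}$ that must be reabsorbed by Hölder against the tangential gain — and keeping the exponents sharp through this reabsorption, while simultaneously tracking the $\delta^{-1}$ dependence and the chart-dependent constants $\norm{a}_{C^{2,1}}$, is the delicate bookkeeping. A secondary technical point is ensuring the corrected test function (with its Bogovski\u\i{} correction supported in $\tfrac34\Omega_P$) is admissible and that the correction term is genuinely lower order; this is routine given the smoothness of $\partial\Omega$ and the mapping properties of $\Bog$, but it must be done carefully so the commutator terms do not spoil the absorption. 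Once both boundary charts and the interior estimate are in hand, compactness of $\partial\Omega$ and a finite covering give the global statement, and the Sobolev-space reformulations at the end of the theorem follow by combining the $\bF$-estimates with~\eqref{eq:hammerq} and~\eqref{eq:3}.
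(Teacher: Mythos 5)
Your outline follows the paper's strategy quite closely at the strategic level: tangential difference quotients $\difn(\xi^2\difp\bu)$ in each chart, an algebraic system for a quantity intrinsic to the stress rather than for $\partial^2_{33}u^\alpha$ (this is indeed what removes $p>\tfrac32$; in the paper the system is for $\mathfrak b_\gamma=\partial_3\bD_{\gamma3}$ with matrix $A_{\alpha\gamma}=\partial_{\gamma3}S_{\alpha3}$), smallness of $\abs{\nabla a}$ to absorb the chart terms, and weighted H\"older to pay for the degenerate factor $\phi''(\abs{\bD\bu})^{-1}$. One genuine deviation is harmless but worth noting: the paper does not divergence-correct the test function with the Bogovski\u\i{} operator; it tests the pressure formulation~\eqref{eq:eq-tlakp} directly and estimates the resulting pressure terms $I_8,\dots,I_{14}$, reserving Lemma~\ref{lem:ism} for the estimate of $\partial_\tau\pi$ itself. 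Your corrected test function would force you to control $\nabla\Bog(\divo\difn(\xi^2\difp\bu))$ in $L^\phi$, an extra (avoidable) burden.

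The genuine gaps are in the normal-direction step, which is the heart of the theorem, and in one claim about the tangential step. First, the tangential estimate does \emph{not} give $\xi\,\partial_\tau\nabla\bu\in L^{(8p-4)/(3p)}$ directly: it gives $\xi\,\partial_\tau\bF(\bD\bu)\in L^2$ and $\phi(\xi\abs{\partial_\tau\nabla\bu})\in L^1$ (roughly an $L^p$ bound); the exponent $\frac{8p-4}{3p}$ for $\partial_\tau\nabla\bu$ only emerges \emph{after} the normal-direction bootstrap has produced $\bF(\bD\bu)\in L^{(8p-4)/p}$, by H\"older against $(\delta+\abs{\bD\bu})^{(2-p)/2}$. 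Second, your description of how the exponents arise --- ``interpolating the $L^2$-tangential control of $\bF$ against the basic energy estimate'' --- is not the actual mechanism and would not produce $\frac{8p-4}{p}$. The paper's argument is a two-stage self-improving scheme built on the anisotropic (Troisi) embedding $\norm{\tilde\xi\bF}_{3q}\le c(1+\norm{\tilde\xi\partial_3\bF}_q)$, where the tangential derivatives of $\bF$ enter with exponent $2\ge q$: a preliminary pass with $q=p$ and the energy bounds gives $\bF(\bD\bu)\in L^{3p}$ and $\nabla\bu\in L^{3p^2/2}$; then, in the main pass, the weighted H\"older estimate of $\phi''(\abs{\bD\bu})^{q/2}\abs{\partial_\tau\nabla\bu}^q$ produces the factor $\norm{\bF\tilde\xi}_{3q}^{q(2-p)/p}$ \emph{of the same norm one is estimating}, and the exponent $q=\frac{8p-4}{3p}$ is determined precisely by the closure condition $\frac{2-p}{p}\cdot\frac{2q}{2-q}=3q$, after which Young's inequality (using $\frac{2-p}{p}<1$) absorbs it. Without this self-absorption --- which you defer to ``delicate bookkeeping'' --- the exponents in the statement cannot be reached, so the proposal as written does not yet constitute a proof of the third and fourth displayed estimates.
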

We observe that the regularity results in
Theorems~\ref{thm:MT1}-\ref{thm:MT2} are the same as in the flat-case,
which are proved (for the restricted range $\frac{3}{2}<p<2$)
in~\cite{Crispo-2009} and~\cite{Ber2009a}, respectively. Nevertheless,
comprehensive analysis of the changes deriving from presence/absence
of the stress tensor with the quadratic part is given here with full
detail. This suggests that our results are quite sharp. (Note also
that in the flat case the estimates obtained are better behaved in
terms of $\delta$.) The main obstacle to the proof --and to possible
improvements-- of the above theorems is given by the coupling of the
boundary condition prescribed on the non-flat boundary $\partial \O$,
together with the solenoidal constraint, which results in the
appearance of the pressure term $\nabla\pi$ in the
equations~\eqref{eq}.

The case of systems where the extra stress tensor depends on the
symmetric velocity gradient, but without a solenoidal constraint, is
completely solved in the case of a flat boundary and $p<2$
in~\cite{SS00}.

\subsection{Auxiliary results}
Here we collect some auxiliary results needed in the sequel of the
paper. 
\begin{Lemma}
  \label{lem:quasinormtrick}
  For all $\epsilon>0$, there exists a constant $c_\epsilon>0$
  depending only on $\epsilon>0$ and the characteristics of $\bfS$
  such that for all sufficiently smooth vector fields $\bu$, $\bv$,
  and $\bw$ we have
  \begin{align*}
    &\skp{\bfS(\bD\bfu) - \bfS(\bD\bfv)}{\bD\bfw - \bD \bfv} \leq
    \epsilon\, \norm{\bfF(\bD\bfu) - \bfF(\bD\bfv)}_2^2 +c_\epsilon\,
    \norm{\bfF(\bD\bfw) - \bfF(\bD\bfv)}_2^2\,.
  \end{align*}
\end{Lemma}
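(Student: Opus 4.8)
The plan is to reduce the assertion to a pointwise inequality and then apply a refined Young inequality for the shifted N-function $\phi_{\abs{\bD\bfv}}$, exploiting that the whole shifted family is uniformly $\Delta_2$. First, by Cauchy--Schwarz, pointwise a.e.,
\[
\big(\bfS(\bD\bfu) - \bfS(\bD\bfv)\big)\cdot\big(\bD\bfw - \bD\bfv\big)\le \abs{\bfS(\bD\bfu) - \bfS(\bD\bfv)}\,\abs{\bD\bfw - \bD\bfv},
\]
and by~\eqref{eq:hammere}, applied with $\bfP=\bD\bfv$ and $\bfQ=\bD\bfu$ (recall that only symmetric arguments are inserted, so the superscript $^\sym$ is dropped), the first factor satisfies $\abs{\bfS(\bD\bfu) - \bfS(\bD\bfv)}\sim \phi'_{\abs{\bD\bfv}}(\abs{\bD\bfu - \bD\bfv})$, with equivalence constants depending only on the characteristics of $\bfS$.

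Next I apply the refined Young inequality~\eqref{eq:ineq:young}. By Remark~\ref{rem:phi_a} both $\phi_{\abs{\bD\bfv}}$ and its conjugate $(\phi_{\abs{\bD\bfv}})^*$ satisfy the $\Delta_2$-condition with constants depending only on $p$ (in particular independent of $\bD\bfv$ and of $\delta$). Applying~\eqref{eq:ineq:young} to the N-function $(\phi_{\abs{\bD\bfv}})^*$, with the variables chosen so that the small parameter falls on the conjugate side, gives for every $\epsilon>0$ a constant $c_\epsilon$ (depending only on $\Delta_2(\phi),\Delta_2(\phi^*)$, hence only on $p$) such that, writing $a:=\abs{\bD\bfv}$,
\[
\phi'_a(\abs{\bD\bfu - \bD\bfv})\,\abs{\bD\bfw - \bD\bfv}\le \epsilon\,(\phi_a)^*\!\big(\phi'_a(\abs{\bD\bfu - \bD\bfv})\big) + c_\epsilon\,\phi_a(\abs{\bD\bfw - \bD\bfv}).
\]
Using the standard identity $(\phi_a)^*(\phi'_a(t))\sim \phi_a(t)$, valid uniformly in $a\ge 0$, the first term on the right is bounded by $c\,\epsilon\,\phi_a(\abs{\bD\bfu - \bD\bfv})$.

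Finally I translate the two $\phi_a$-terms back to natural distances: by~\eqref{eq:hammera}--\eqref{eq:hammerb}, with $\bfP=\bD\bfv$ and $\bfQ$ equal to $\bD\bfw$ resp.\ $\bD\bfu$, one has $\phi_{\abs{\bD\bfv}}(\abs{\bD\bfw - \bD\bfv})\sim\abs{\bfF(\bD\bfw)-\bfF(\bD\bfv)}^2$ and $\phi_{\abs{\bD\bfv}}(\abs{\bD\bfu - \bD\bfv})\sim\abs{\bfF(\bD\bfu)-\bfF(\bD\bfv)}^2$, again with constants depending only on the characteristics of $\bfS$. Chaining the three displays, integrating over $\Omega$, and renaming the (harmless) equivalence factor $c\,\epsilon\rightsquigarrow\epsilon$ (adjusting $c_\epsilon$ accordingly) yields the claimed estimate. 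The only genuinely delicate point is to invoke~\eqref{eq:ineq:young} in the correct direction, so that the small parameter multiplies $(\phi_{\abs{\bD\bfv}})^*\!\big(\abs{\bfS(\bD\bfu)-\bfS(\bD\bfv)}\big)\sim\abs{\bfF(\bD\bfu)-\bfF(\bD\bfv)}^2$ — the quantity that is later absorbed into a coercivity estimate — and not the $\bfF(\bD\bfw)-\bfF(\bD\bfv)$ term; the uniform $\Delta_2$-control of the shifted family ensures all constants are independent of $\bD\bfv$ and of $\delta$.
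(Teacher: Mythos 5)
Your proof is correct, and since the paper does not prove this lemma itself but only cites \cite[Lemma 2.3]{der}, your self-contained argument is worth comparing against the standard one rather than against anything in the text. The chain you use --- pointwise Cauchy--Schwarz, the equivalence \eqref{eq:hammere} to replace $\abs{\bfS(\bD\bfu)-\bfS(\bfD\bfv)}$ by $\phi'_{\abs{\bD\bfv}}(\abs{\bD\bfu-\bD\bfv})$, a Young inequality for the shifted N-function with the small parameter on the conjugate side, and finally \eqref{eq:hammera}--\eqref{eq:hammerb} to convert both shifted modulars into natural distances --- is exactly the ``quasi-norm trick'' of the cited reference, and every constant you invoke is uniform in the shift by Remark~\ref{rem:phi_a}. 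Two small remarks. First, your detour through $(\phi_a)^*\big(\phi'_a(t)\big)\sim\phi_a(t)$ is avoidable: the second line of \eqref{eq:ineq:young}, applied to $\psi=\phi_a$ with the roles of $s$ and $t$ arranged so that $\epsilon$ lands on $\phi_a(\abs{\bD\bfu-\bD\bfv})$, gives
\begin{equation*}
  \phi'_{\abs{\bD\bfv}}(\abs{\bD\bfu-\bD\bfv})\,\abs{\bD\bfw-\bD\bfv}
  \le \epsilon\,\phi_{\abs{\bD\bfv}}(\abs{\bD\bfu-\bD\bfv})
  + c_\epsilon\,\phi_{\abs{\bD\bfv}}(\abs{\bD\bfw-\bD\bfv})
\end{equation*}
in one step; in any case the upper bound $(\phi_a)^*(\phi'_a(t))\le t\,\phi'_a(t)\le c\,\phi_a(t)$ that you actually need follows from \eqref{eq:equi2} for the shifted family, so your version is sound. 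Second, your closing emphasis on applying Young in the correct direction is indeed the only delicate point, and you handle it correctly: the $\epsilon$-term must sit on $\abs{\bfF(\bD\bfu)-\bfF(\bD\bfv)}^2$, which is the quantity absorbed later.
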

\begin{proof}
  This is proved in~\cite[Lemma 2.3]{der}.
\end{proof}
\begin{Lemma}\label{lem:change}
  Let $\psi $ be an N--function satisfying the
  $\Delta_2$--condition. Then, for all $\bP,\bQ \in \setR^{n\times n}$
  and all $t\ge 0$ there holds
  \begin{align}
    \label{eq:33}
    \psi'_{\abs{\bP}}(t) &\le 2\Delta_2(\psi')\, \psi'_{\abs{\bQ}}(t) +
    \psi'_\abs{\bP}(\abs{\bP-\bQ}) \,,\\[1mm]
    \psi'_{\abs{\bP}}(t) &\le 2\Delta_2(\psi')\,\big ( \psi'_{\abs{\bQ}}(t) +
    \psi'_\abs{\bQ}(\abs{\bP-\bQ}) \big )\,.\label{eq:33a}
  \end{align}
\end{Lemma}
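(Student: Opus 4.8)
\textbf{Proof plan for Lemma~\ref{lem:change}.}

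The plan is to work directly from the definition~\eqref{eq:phi_shifted} of the shifted N--function, namely $\psi'_a(t)=\psi'(a+t)\,\frac{t}{a+t}$, treating the two inequalities separately. For~\eqref{eq:33} I would start from $\psi'_{\abs{\bP}}(t)=\psi'(\abs{\bP}+t)\,\frac{t}{\abs{\bP}+t}$ and split into the two regimes that naturally govern the behaviour of a shifted N--function: the case $\abs{\bP-\bQ}\le \abs{\bQ}$ (equivalently $\abs{\bP}\lesssim\abs{\bQ}$), where one wants to compare $\psi'_{\abs{\bP}}(t)$ with $\psi'_{\abs{\bQ}}(t)$, and the case $\abs{\bP-\bQ}\ge \abs{\bQ}$ (equivalently $\abs{\bP}\lesssim\abs{\bP-\bQ}$), where one wants to compare $\psi'_{\abs{\bP}}(t)$ with $\psi'_{\abs{\bP}}(\abs{\bP-\bQ})$. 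In the first regime, $\abs{\bP}+t\le \abs{\bQ}+\abs{\bP-\bQ}+t\le 2(\abs{\bQ}+t)$ provided $\abs{\bP-\bQ}\le t$ (and one handles $\abs{\bP-\bQ}>t$ by the monotonicity $\psi'_{\abs{\bP}}(t)\le \psi'_{\abs{\bP}}(\abs{\bP-\bQ})$, which feeds into the other term on the right-hand side); then the $\Delta_2$--condition on $\psi'$ (i.e.\ $\psi'(2s)\le \Delta_2(\psi')\,\psi'(s)$) upgrades $\psi'(\abs{\bP}+t)$ to a constant times $\psi'(\abs{\bQ}+t)$, while the factor $\frac{t}{\abs{\bP}+t}$ is bounded by $\frac{t}{\abs{\bQ}+t}$ up to a factor $2$ as well. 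In the second regime one uses $\abs{\bP}+t\ge$ something comparable to $\abs{\bP}+\abs{\bP-\bQ}$ only when $t\le\abs{\bP-\bQ}$, and again monotonicity of $t\mapsto\psi'_{a}(t)$ together with $\Delta_2(\psi')$ gives $\psi'_{\abs{\bP}}(t)\le c\,\psi'_{\abs{\bP}}(\abs{\bP-\bQ})$ with $c$ controlled by $\Delta_2(\psi')$; the remaining subcase $t>\abs{\bP-\bQ}$ is again absorbed into the first regime's term. Collecting the worst constant across the subcases yields the factor $2\Delta_2(\psi')$ in front of $\psi'_{\abs{\bQ}}(t)$.

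For the second inequality~\eqref{eq:33a} I would simply post-process~\eqref{eq:33}: the extra term $\psi'_{\abs{\bP}}(\abs{\bP-\bQ})$ appearing there has to be re-expressed in terms of $\psi'_{\abs{\bQ}}$. Applying~\eqref{eq:33} once more with the roles arranged so that the shift base becomes $\abs{\bQ}$ and the argument becomes $\abs{\bP-\bQ}$ — that is, using $\psi'_{\abs{\bP}}(\abs{\bP-\bQ})\le 2\Delta_2(\psi')\,\psi'_{\abs{\bQ}}(\abs{\bP-\bQ}) + \psi'_{\abs{\bP}}(0)$ and noting $\psi'_{\abs{\bP}}(0)=0$ — gives $\psi'_{\abs{\bP}}(\abs{\bP-\bQ})\le 2\Delta_2(\psi')\,\psi'_{\abs{\bQ}}(\abs{\bP-\bQ})$. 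Substituting this back into~\eqref{eq:33} and, if necessary, crudely enlarging the constant to a common $2\Delta_2(\psi')$ produces~\eqref{eq:33a}. Alternatively one can rerun the same two-regime argument as above but always comparing against $\psi'_{\abs{\bQ}}$, which is arguably cleaner; either route is routine once~\eqref{eq:33} is in hand.

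The only mildly delicate point — and the step I would be most careful about — is the bookkeeping of the case split on $t$ versus $\abs{\bP-\bQ}$ (and on $\abs{\bP}$ versus $\abs{\bQ}$), making sure that in each subcase the quantity one drops really is dominated by one of the two terms allowed on the right-hand side, and that the constants genuinely collapse to $2\Delta_2(\psi')$ rather than some larger power. This is the standard ``change of shift'' lemma for Orlicz--Musielak settings, so beyond this case analysis there is no real obstacle; everything else reduces to monotonicity of $s\mapsto\psi'(s)$, monotonicity of $t\mapsto \psi'_a(t)$, and one or two applications of the $\Delta_2$--condition for $\psi'$.
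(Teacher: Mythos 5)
The paper does not actually prove this lemma; it simply cites \cite[Lemma 5.13, Remark 5.14]{dr-nafsa}, so your proposal has to be judged on its own. Your core mechanism for \eqref{eq:33} is the standard (and correct) one: split on $t\ge\abs{\bP-\bQ}$ versus $t<\abs{\bP-\bQ}$; in the first case $\abs{\bP}+t\le 2(\abs{\bQ}+t)$ and $\abs{\bQ}+t\le 2(\abs{\bP}+t)$, so monotonicity of $\psi'$ plus one application of $\Delta_2(\psi')$ on the factor $\psi'(\abs{\bP}+t)$ and a factor $2$ on $t/(\abs{\bP}+t)$ give exactly $2\Delta_2(\psi')\,\psi'_{\abs{\bQ}}(t)$; in the second case monotonicity of $t\mapsto\psi'_a(t)$ gives $\psi'_{\abs{\bP}}(t)\le\psi'_{\abs{\bP}}(\abs{\bP-\bQ})$. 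The additional outer split on $\abs{\bP-\bQ}$ versus $\abs{\bQ}$ is harmless but superfluous; the single split on $t$ versus $\abs{\bP-\bQ}$ already yields \eqref{eq:33}.

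There is, however, one genuinely broken step in your passage from \eqref{eq:33} to \eqref{eq:33a}. You claim to ``apply \eqref{eq:33} once more'' to obtain
$\psi'_{\abs{\bP}}(\abs{\bP-\bQ})\le 2\Delta_2(\psi')\,\psi'_{\abs{\bQ}}(\abs{\bP-\bQ})+\psi'_{\abs{\bP}}(0)$.
This is not an instance of \eqref{eq:33}: the last term in \eqref{eq:33} is always $\psi'_{\abs{\bP}}(\abs{\bP-\bQ})$ (it does not depend on $t$), so substituting $t=\abs{\bP-\bQ}$ into \eqref{eq:33} only gives the tautology $\psi'_{\abs{\bP}}(\abs{\bP-\bQ})\le 2\Delta_2(\psi')\,\psi'_{\abs{\bQ}}(\abs{\bP-\bQ})+\psi'_{\abs{\bP}}(\abs{\bP-\bQ})$. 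The inequality you actually need, $\psi'_{\abs{\bP}}(\abs{\bP-\bQ})\le 2\Delta_2(\psi')\,\psi'_{\abs{\bQ}}(\abs{\bP-\bQ})$, is nevertheless true: it is precisely the first case ($t\ge\abs{\bP-\bQ}$) of your own argument applied with $t=\abs{\bP-\bQ}$, where the extra term never appears. With that justification (or your ``alternative'' of rerunning the two-regime argument always comparing against $\psi'_{\abs{\bQ}}$, which is the cleaner route), \eqref{eq:33a} follows by substitution into \eqref{eq:33}. So the plan is sound, but the specific re-application of \eqref{eq:33} as stated does not do what you claim and must be replaced by the direct case-one estimate.
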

\begin{proof}
  This is proved in~\cite[Lemma 5.13, Remark 5.14]{dr-nafsa}.
\end{proof}
\begin{Lemma}[Change of shift]
  \label{lem:change2}
  Let $\psi$ be an N--function such that $\psi$ and $\psi^*$ satisfy
  the $\Delta_2$--condition.  Then for all $\delta \in (0,1)$ there
  exists $c_\vep=c_\vep (\Delta_2(\psi'))$ such that all $\bP,\bQ
  \in \setR^{n\times n}$, and all $t\ge 0$
  \begin{align}
    \label{eq:34c1}
    \psi_{\abs{\bP}}(t) &\le c_\vep\, \psi_{\abs{\bQ}}(t) +
    \vep\,\psi_\abs{\bP}\big    (\abs{\bP-\bQ}\big )\,\\[1mm]
    \label{eq:34c}
    \psi_{\abs{\bP}}(t)&\le c_\vep\,\psi_{\abs{\bQ}}(t) +
    \vep\,\psi_\abs{\bQ}(\abs{\bP-\bQ}) \,.
  \end{align}
  \begin{align}
    \label{eq:35c1}
    \big (\psi_{\abs{\bP}}\big )^*(t) &\le c_\vep\, \big
    (\psi_{\abs{\bQ}}\big )^*(t) +
    \vep\,\psi_\abs{\bP}\big    (\abs{\bP-\bQ}\big )\,\\[1mm]
    \label{eq:35c}
    \big (\psi_{\abs{\bP}}\big )^*(t)&\le c_\vep\,\big
    (\psi_{\abs{\bQ}}\big )^*(t) +
    \vep\,\psi_\abs{\bQ}(\abs{\bP-\bQ}) \,.
  \end{align}
\end{Lemma}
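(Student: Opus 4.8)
This is a classical fact about shifted N--functions (see~\cite[Chap.~5]{dr-nafsa} and~\cite{die-ett}); the plan of a self-contained proof is as follows. All four statements are purely pointwise in $\bP,\bQ,t$, and the idea is to reduce them to Lemma~\ref{lem:change} together with the refined Young inequality~\eqref{eq:ineq:young}. First I would observe that it is enough to prove the two inequalities~\eqref{eq:34c} and~\eqref{eq:35c}, which carry the shift $\abs{\bQ}$ in the last term. Indeed, from the definition~\eqref{eq:phi_shifted} one checks $\psi_a(t)\sim \psi'(a+t)\,\tfrac{t^2}{a+t}$ with constants depending only on $\Delta_2(\psi')$; since one always has $\abs{\bP}+\abs{\bP-\bQ}\sim \abs{\bQ}+\abs{\bP-\bQ}$, monotonicity of $\psi'$ and the $\Delta_2$--condition then give $\psi_{\abs{\bP}}(\abs{\bP-\bQ})\sim \psi_{\abs{\bQ}}(\abs{\bP-\bQ})$, so that~\eqref{eq:34c1} follows from~\eqref{eq:34c} and~\eqref{eq:35c1} from~\eqref{eq:35c} after a harmless rescaling of $\epsilon$.

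To prove~\eqref{eq:34c} I would start from~\eqref{eq:33a}, i.e. $\psi'_{\abs{\bP}}(s)\le 2\Delta_2(\psi')\big(\psi'_{\abs{\bQ}}(s)+\psi'_{\abs{\bQ}}(\abs{\bP-\bQ})\big)$ for all $s\ge0$, and integrate in $s$ over $[0,t]$, which by~\eqref{eq:phi_shifted} yields
\begin{equation*}
  \psi_{\abs{\bP}}(t)\le 2\Delta_2(\psi')\,\psi_{\abs{\bQ}}(t)+2\Delta_2(\psi')\,t\,\psi'_{\abs{\bQ}}(\abs{\bP-\bQ}).
\end{equation*}
The remaining term $t\,\psi'_{\abs{\bQ}}(\abs{\bP-\bQ})$ is then handled by the version of~\eqref{eq:ineq:young} applied to the conjugate function, namely $a\,b\le c_\eta\,\psi_{\abs{\bQ}}(a)+\eta\,(\psi_{\abs{\bQ}})^*(b)$, which is legitimate precisely because $\psi^*$, and hence the whole family $(\psi_a)^*$ uniformly in $a$, satisfies the $\Delta_2$--condition. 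Combining this with the Young identity $(\psi_{\abs{\bQ}})^*\big(\psi'_{\abs{\bQ}}(s)\big)=s\,\psi'_{\abs{\bQ}}(s)-\psi_{\abs{\bQ}}(s)\le c\,\psi_{\abs{\bQ}}(s)$ gives $t\,\psi'_{\abs{\bQ}}(\abs{\bP-\bQ})\le c_\eta\,\psi_{\abs{\bQ}}(t)+c\,\eta\,\psi_{\abs{\bQ}}(\abs{\bP-\bQ})$, and choosing $\eta$ proportional to $\epsilon$ produces~\eqref{eq:34c} with a constant depending only on $\Delta_2(\psi)$, $\Delta_2(\psi^*)$ and $\epsilon$.

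For the conjugate inequalities I would invoke the shift--conjugation equivalence $(\psi_a)^*(t)\sim(\psi^*)_{\psi'(a)}(t)$, valid under the present hypotheses, and then apply~\eqref{eq:34c} --- already established, now for the N--function $\psi^*$ and with the scalar shifts $\psi'(\abs{\bP})$, $\psi'(\abs{\bQ})$ --- to obtain
\begin{equation*}
  (\psi^*)_{\psi'(\abs{\bP})}(t)\le c_\epsilon\,(\psi^*)_{\psi'(\abs{\bQ})}(t)+\epsilon\,(\psi^*)_{\psi'(\abs{\bQ})}\big(|\psi'(\abs{\bP})-\psi'(\abs{\bQ})|\big).
\end{equation*}
It then remains to estimate the last term by $\psi_{\abs{\bQ}}(\abs{\bP-\bQ})$, which follows from the standard estimate $|\psi'(\abs{\bP})-\psi'(\abs{\bQ})|\le c\,\psi'_{\abs{\bQ}}(\abs{\bP-\bQ})$ (again a consequence of $\psi,\psi^*\in\Delta_2$, cf.~\cite{dr-nafsa}) together with $(\psi^*)_{\psi'(a)}\big(\psi'_a(s)\big)\sim(\psi_a)^*\big(\psi'_a(s)\big)\sim\psi_a(s)$ and the $\Delta_2$--condition; this gives~\eqref{eq:35c}, and~\eqref{eq:35c1} follows as above.

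The delicate point --- and what I expect to be the main obstacle --- is that the small parameter $\epsilon$ must end up in front of the \emph{off--diagonal} term $\psi_{\abs{\bQ}}(\abs{\bP-\bQ})$ (respectively the $(\psi_{\abs{\bQ}})^*$--term), while the coefficient of the diagonal term $\psi_{\abs{\bQ}}(t)$ is allowed to degenerate. A naive integration of Lemma~\ref{lem:change} followed by the ordinary Young inequality puts the large constant on the wrong term; the remedy is to use the refined Young inequality in its form acting on the conjugate side, which is exactly where the hypothesis $\psi^*\in\Delta_2$ is essential. The second, more bookkeeping-type, difficulty is transferring a shift written in the variable $\abs{\bQ}$ to one written in $\psi'(\abs{\bQ})$ for the conjugate estimates, which is taken care of by the shift--conjugation equivalence and the derivative--difference estimate quoted above.
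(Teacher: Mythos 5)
The paper itself gives no argument for this lemma: its ``proof'' is just the pointer to Lemmas 5.15 and 5.18 of~\cite{dr-nafsa}, so your sketch has to be judged on its own merits. Your treatment of \eqref{eq:34c1}--\eqref{eq:34c} is correct and is the standard argument: the reduction of \eqref{eq:34c1} to \eqref{eq:34c} via $\psi_{\abs{\bP}}(\abs{\bP-\bQ})\sim\psi_{\abs{\bQ}}(\abs{\bP-\bQ})$ is sound (both sides are comparable to $\psi'(\abs{\bP}+\abs{\bP-\bQ})\,\abs{\bP-\bQ}^2/(\abs{\bP}+\abs{\bP-\bQ})$, and $\abs{\bP}+\abs{\bP-\bQ}\sim\abs{\bQ}+\abs{\bP-\bQ}$); integrating \eqref{eq:33a} is exactly the right move; and putting the small parameter on the conjugate side of Young's inequality together with $(\psi_{\abs{\bQ}})^*\big(\psi'_{\abs{\bQ}}(s)\big)\le c\,\psi_{\abs{\bQ}}(s)$ finishes \eqref{eq:34c}. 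You also correctly identify where the $\epsilon$ has to land.

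The weak point is the conjugate half. You derive \eqref{eq:35c} from the scalar estimate $\abs{\psi'(\abs{\bP})-\psi'(\abs{\bQ})}\le c\,\psi'_{\abs{\bQ}}(\abs{\bP-\bQ})$, which you attribute to the $\Delta_2$-conditions alone. That estimate is \emph{not} a consequence of $\Delta_2(\psi),\Delta_2(\psi^*)<\infty$: take $\psi'(t)=t$ for $t\le 1$, let $\psi'$ rise with slope $M$ from $1$ to $2$ on $[1,1+1/M]$, and let it grow like $2t$ afterwards; then $\psi'(t)\sim t$, so all $\Delta_2$-constants stay bounded in $M$, yet $\psi'(1+1/M)-\psi'(1)=1$ while $\psi'_1(1/M)\approx 2/M$. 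The inequality genuinely needs the extra regularity $\psi''(t)\,t\sim\psi'(t)$, which the $\varphi$ of this paper has by \eqref{eq:equi1} but which is not among the hypotheses of the lemma as stated; the same caveat applies to the equivalence $(\psi_a)^*\sim(\psi^*)_{\psi'(a)}$ you invoke. Under the stated hypotheses the clean way to obtain \eqref{eq:35c1}--\eqref{eq:35c} is by duality from what you have already proved: swap $\bP$ and $\bQ$ in \eqref{eq:34c1}, rearrange to $\psi_{\abs{\bP}}(s)\ge c_\epsilon^{-1}\psi_{\abs{\bQ}}(s)-\epsilon\, c_\epsilon^{-1}\psi_{\abs{\bQ}}(\abs{\bP-\bQ})$, take the Legendre transform using $\sup_{s}\big(st-\lambda\psi(s)\big)=\lambda\,\psi^*(t/\lambda)$, and absorb the dilation $t\mapsto c_\epsilon t$ with $\Delta_2\big((\psi_{\abs{\bQ}})^*\big)$, which is finite uniformly in the shift. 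With that replacement your proof is complete.
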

\begin{proof}
  This is proved in~\cite[Lemma 5.15, Lemma 5.18]{dr-nafsa}.
\end{proof}
%
%
\begin{Proposition}[Divergence equation in Orlicz spaces]
  \label{thm:bogovskii}
  Let $G \subset \setR^n$ be a \linebreak bounded John domain.  Then,
  there exists a linear operator $\Bog\,:\,C^\infty_{0,0}(G) \to
  C^\infty_0(G)$ which extends uniquely for all N-functions $\psi$
  with $\Delta_2(\psi), \Delta_2(\psi^*)<\infty$ to an operator
  $\Bog:\,L^\psi_0(G)\to W^{1,\psi}_0(G)$, satisfying $\divergence \Bog
  f = f$, and
  \begin{align*}
    \norm{\nabla \Bog f}_{L^\psi(G)} &\leq c\,
    \norm{f}_{L^\psi_0(G)},
    \\
    \int_G \psi\big(\abs{\nabla \Bog f}\big)\, d\bx &\leq c\,
    \int_G \psi\big(\abs{f}\big)\, d\bx.
  \end{align*}
  The constant~$c$ depends on $\Delta_2(\psi)$, $\Delta_2(\psi^*)$,
  and the John constant of $G$.
\end{Proposition}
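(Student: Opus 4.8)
The plan is to reduce everything to the classical Bogovskii construction on pieces star-shaped with respect to a ball, to patch these pieces along chains adapted to the John geometry, and finally to transfer the resulting Calder\'on--Zygmund structure from $L^q$ to the Orlicz scale by maximal-function methods. On a bounded domain $G_0$ star-shaped with respect to a ball $B\subset G_0$, fix $\omega\in C^\infty_0(B)$ with $\int_B\omega\,d\bx=1$ and, for $f\in C^\infty_{0,0}(G_0)$, set
\[
  \Bog_{G_0}f(\bx):=\int_{G_0}f(\by)\,\frac{\bx-\by}{\abs{\bx-\by}^n}
  \int_{\abs{\bx-\by}}^{\infty}\omega\Big(\by+r\,\tfrac{\bx-\by}{\abs{\bx-\by}}\Big)\,r^{n-1}\,dr\;d\by .
\]
A classical computation (Bogovskii; cf.\ Galdi's monograph on the Navier--Stokes equations, Ch.~III) shows that $\Bog_{G_0}f\in C^\infty_0(G_0)$, that $\divergence\Bog_{G_0}f=f$, and that
\[
  \partial_j(\Bog_{G_0}f)_i(\bx)=\mathrm{p.v.}\!\int_{G_0}K_{ij}(\bx,\bx-\by)\,f(\by)\,d\by+g_{ij}(\bx)\,f(\bx),
\]
where $g_{ij}\in L^\infty$ and $\bx\mapsto K_{ij}(\bx,\cdot)$ is a family of Calder\'on--Zygmund kernels (homogeneous of degree $-n$ in the second slot, with vanishing mean over spheres and controlled derivatives), all constants depending only on $n$ and on the ratio of $\operatorname{diam}(G_0)$ to the radius of $B$.

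For a general bounded John domain $G$ I would use the decomposition technique of Diening--R\r u\v zi\v cka--Schumacher: cover $G$ by a locally finite family of open cubes $\{Q_i\}$ of bounded overlap, each comparable to a Whitney cube and star-shaped with respect to a fixed fraction of itself, so that any two cubes are joined by a chain of neighbours whose length is controlled by the John constant of $G$. Correspondingly, decompose any $f\in L^\psi_0(G)$ as $f=\sum_i f_i$ with $\spt f_i\subset Q_i$, $\int_{Q_i}f_i\,d\bx=0$, and with the modular bound $\sum_i\int_{Q_i}\psi(\abs{f_i})\,d\bx\le c\int_G\psi(\abs f)\,d\bx$, the mass transported along the chains being estimated by the Hardy--Littlewood maximal operator (here $\Delta_2(\psi)$ and $\Delta_2(\psi^*)$ enter). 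Then $\Bog f:=\sum_i\Bog_{Q_i}f_i$ has compact support in $G$ and satisfies $\divergence\Bog f=\sum_i f_i=f$; the point of the chain condition is that, globally on $\setR^n$, $\abs{\nabla\Bog f}$ is dominated by $M f$ plus the action of a Calder\'on--Zygmund operator, with constants depending only on $n$ and the John constant.

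It then remains to run the Orlicz machinery. Since $\psi$ and $\psi^*$ satisfy the $\Delta_2$-condition, the Boyd indices of $L^\psi(\setR^n)$ lie strictly between $1$ and $\infty$, so the Hardy--Littlewood maximal operator is bounded on $L^\psi$ and obeys the modular inequality $\int\psi(Mh)\,d\bx\le c\int\psi(\abs h)\,d\bx$ with $c=c(\Delta_2(\psi),\Delta_2(\psi^*))$. Combining this with the Coifman--Fefferman pointwise estimate $M^{\sharp}(Th)\le c\,M h$ for Calder\'on--Zygmund operators $T$ and the (modular) Orlicz form of the Fefferman--Stein inequality $\norm{h}_\psi\le c\norm{M^{\sharp}h}_\psi$, one obtains
\[
  \norm{\nabla\Bog f}_{\psi}\le c\,\norm{f}_{\psi},\qquad
  \int_G\psi\big(\abs{\nabla\Bog f}\big)\,d\bx\le c\int_G\psi\big(\abs f\big)\,d\bx,
\]
with $c$ depending only on $\Delta_2(\psi)$, $\Delta_2(\psi^*)$ and the John constant; since $C^\infty_{0,0}(G)$ is dense in $L^\psi_0(G)$ (again by $\Delta_2$), these a priori estimates uniquely extend $\Bog$ to a bounded operator $L^\psi_0(G)\to W^{1,\psi}_0(G)$. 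I expect the main obstacle to be the John-domain step: building the zero-mean decomposition $f=\sum_i f_i$ compatibly with the chain structure and checking that the patched operator retains uniform Calder\'on--Zygmund bounds depending only on the John constant; everything downstream is soft maximal-function and extrapolation theory for Orlicz spaces, and the star-shaped computation upstream is classical.
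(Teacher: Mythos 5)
The paper does not actually prove this proposition; it simply cites \cite[Theorem~4.2]{BBDR2012}, whose proof (resting on the decomposition technique for John domains of \cite{DieRS10}) follows precisely the route you outline: local Bogovskii operators on star-shaped pieces with Calder\'on--Zygmund structure, a zero-mean chain decomposition adapted to the John geometry, and maximal-function and Calder\'on--Zygmund bounds in Orlicz spaces under the $\Delta_2$-conditions on $\psi$ and $\psi^*$. Your sketch is therefore consistent with the cited source; the only loose point is that for a John domain the chains need not have uniformly bounded length --- what \cite{DieRS10} actually establishes is an emanating-chain (mass-transport) estimate controlled by the Hardy--Littlewood maximal operator, which is exactly the step you correctly single out as the main obstacle.
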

\begin{proof}
  This is Theorem~4.2 in~\cite{BBDR2012}.
\end{proof}
\begin{Proposition}[Korn's inequality in Orlicz spaces]
  \label{thm:korn_orlicz}
  Let $\psi$ be an N-function with $\Delta_2(\psi)$, $
  \Delta_2(\psi^*)<\infty$ and let $G \subset \setR^n$ be a bounded
  John domain.  Then, for all $\bfw \in W_0^{1,\psi}(G)$ 
  \begin{align*}
    \int_{G} \psi\big(\bigabs{\nabla \bfw } \big)\, d\bx &\leq c\, \int_{G}
    \psi\big(\abs{\bfD \bfw}\big)\, d\bx\,.
  \end{align*}
  The constant $c$ depends only on the John constant, $\Delta_2(\psi)$, and
  $\Delta_2(\psi^*)$.
\end{Proposition}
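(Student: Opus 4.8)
The plan is to reduce the claimed \emph{modular} inequality to the classical Calderón--Zygmund representation of the full gradient in terms of the symmetric gradient, and then to invoke the (modular) boundedness of compositions of Riesz transforms on the Orlicz space $L^\psi(\setR^n)$, which is exactly the point where the two $\Delta_2$-conditions are used. First I would observe that, by the very definition of $W_0^{1,\psi}(G)$, the space $C_0^\infty(G)$ is dense in it, and that, since $\psi$ and $\psi^*$ satisfy the $\Delta_2$-condition, norm convergence in $W_0^{1,\psi}(G)$ forces convergence of the associated modulars; hence it suffices to prove the estimate for $\bfw\in C_0^\infty(G)$ and then pass to the limit, using Fatou's lemma on the left-hand side. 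For such a $\bfw$ I would extend it by zero to all of $\setR^n$, obtaining $\bfw\in C_0^\infty(\setR^n)$ whose full and symmetric gradients are the zero extensions of the corresponding quantities on $G$; in particular the modulars of $\nabla\bfw$ and of $\bfD\bfw$ over $G$ and over $\setR^n$ agree.

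Next I would record the standard pointwise identity: from $\Delta w^i = 2\sum_j\partial_j D_{ij}(\bfw)-\partial_i\,\divo\bfw$, applying $\Delta^{-1}$ (meaningful on compactly supported functions) together with one further derivative gives
\begin{equation*}
  \partial_k w^i = -2\sum_{j} R_kR_j\big[D_{ij}(\bfw)\big] + \sum_{j} R_kR_i\big[D_{jj}(\bfw)\big],
\end{equation*}
where $R_k$ denotes the $k$-th Riesz transform. Thus every component of $\nabla\bfw$ is a finite linear combination of second-order Riesz transforms $R_aR_b$ applied to components of $\bfD\bfw$, whence the pointwise bound $\abs{\nabla\bfw(\bx)}\le c\sum_{a,b,m,n}\abs{R_aR_b[D_{mn}(\bfw)](\bx)}$ for a.e.\ $\bx\in\setR^n$.

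The analytic core, which I would quote rather than reprove, is the modular estimate for Calderón--Zygmund operators on Orlicz spaces: since each $R_aR_b$ is a Calderón--Zygmund operator, there is a constant $c=c(\Delta_2(\psi),\Delta_2(\psi^*),n)$ with $\int_{\setR^n}\psi(\abs{R_aR_b f})\,d\bx\le c\int_{\setR^n}\psi(\abs{f})\,d\bx$ for all $f\in L^\psi(\setR^n)$ (see \cite{dr-nafsa} and the references therein; the assumption that both $\psi$ and $\psi^*$ satisfy the $\Delta_2$-condition is precisely what keeps the Boyd indices of $\psi$ strictly between $1$ and $\infty$). Combining this with the pointwise bound above, and using $\Delta_2(\psi)$ to split the finite sum and to absorb multiplicative constants inside $\psi$, yields
\begin{equation*}
  \int_G\psi(\abs{\nabla\bfw})\,d\bx=\int_{\setR^n}\psi(\abs{\nabla\bfw})\,d\bx\le c\!\sum_{a,b,m,n}\!\int_{\setR^n}\!\psi(\abs{R_aR_b D_{mn}(\bfw)})\,d\bx\le c\int_{\setR^n}\psi(\abs{\bfD\bfw})\,d\bx=c\int_G\psi(\abs{\bfD\bfw})\,d\bx .
\end{equation*}
Passing to the limit along an approximating sequence in $C_0^\infty(G)$ then proves the proposition. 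The main obstacle is really only the quoted modular boundedness of the singular integrals $R_aR_b$ on $L^\psi(\setR^n)$; everything else is routine bookkeeping with the $\Delta_2$-condition. I would finally remark that for functions with vanishing boundary values the geometry of $G$ intervenes only through its boundedness (needed to ensure $\psi(\abs{\bfD\bfw})\in L^1(G)$), so the John hypothesis---included here for uniformity with Proposition~\ref{thm:bogovskii}---is not actually required by this argument; if one prefers a proof in which the John constant genuinely appears, Korn's inequality can instead be deduced from the solvability of the divergence equation of Proposition~\ref{thm:bogovskii} via the classical equivalence between the two statements.
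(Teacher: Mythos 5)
Your argument is correct, but it is not the route the paper takes: the paper disposes of this proposition in one line by citing Theorem~6.10 of~\cite{DieRS10}, which is a general Korn inequality on John domains obtained there via a decomposition technique (and which covers functions without zero trace, where one must subtract a rigid motion and the John geometry genuinely enters). Your proof is the classical one for the zero--trace case: extend $\bfw\in C_0^\infty(G)$ by zero, use the identity $\partial_k w^i=-2\sum_j R_kR_j[D_{ij}(\bfw)]+\sum_j R_kR_i[D_{jj}(\bfw)]$ (which is easily verified on the Fourier side), and invoke the modular boundedness of second-order Riesz transforms on $L^\psi(\setR^n)$ under $\Delta_2(\psi),\Delta_2(\psi^*)<\infty$ — a fact that is indeed available in~\cite{dr-nafsa} via the chain $\int\psi(\abs{Tf})\le c\int\psi(Mf)\le c\int\psi(\abs{f})$, where the first step uses $\Delta_2(\psi)$ and the second $\Delta_2(\psi^*)$. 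The density and limit-passage bookkeeping is handled properly (Fatou on the left, modular continuity under $\Delta_2$ on the right). What your approach buys is a self-contained, more elementary proof and the sharper observation — which you correctly make — that for $W_0^{1,\psi}$ the constant does not depend on the John constant at all; what the paper's citation buys is uniformity with the other auxiliary results (Bogovskii, Ne\v{c}as-type pressure estimates), all drawn from the same John-domain framework. Your closing remark on deducing Korn from the divergence equation of Proposition~\ref{thm:bogovskii} is also a legitimate alternative, though not needed.
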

\begin{proof}
  This is a special case of~\cite[Thm.~6.10
]{DieRS10}.
\end{proof}
%
\begin{Proposition}[\Poincare{} inequality]
  \label{lem:Poincare-inequality}
  Let $G \subset \setR^n$ be open and bounded. Let $\psi$ be an
  N-function with $\Delta_2(\psi),\Delta_2(\psi^*)< \infty$.  Then, 
  there exists $c>0$ only depending on $\Delta_2(\psi)$ and
  $\Delta_2(\psi^*)$ such that
  \begin{equation}
    \label{eq:Poincare_2}
    \int_G\psi\bigg( \frac{\abs{u}}{\diameter(G)}\bigg)\, d\bx\leq
    c\int_G\psi(|\nabla u|)\, d\bx\qquad \forall\,u\in
    W^{1,\psi}_0(G). 
  \end{equation}
\end{Proposition}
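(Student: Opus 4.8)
The plan is to reduce, by density, to functions $u\in C^\infty_0(G)$ and then to combine the elementary one--dimensional slicing estimate with Jensen's inequality for the convex function $\psi$. For the reduction, recall that $C^\infty_0(G)$ is dense in $W^{1,\psi}_0(G)$, so for $u\in W^{1,\psi}_0(G)$ we may pick $u_k\in C^\infty_0(G)$ with $u_k\to u$ and $\nabla u_k\to\nabla u$ in $L^\psi(G)$; since $\Delta_2(\psi)<\infty$, norm convergence implies modular convergence, i.e. $\rho_\psi(u_k-u)\to0$ and $\rho_\psi(\nabla u_k-\nabla u)\to0$. Granting the inequality with $c=1$ for smooth functions (proved below) and using the $\Delta_2$--condition in the form $\psi(s+t)\le c_\Delta(\psi(s)+\psi(t))$, we obtain for every $k$
\begin{equation*}
  \int_G\psi\Big(\frac{\abs{u_k}}{\diameter(G)}\Big)\,d\bx\le \int_G\psi\big(\abs{\nabla u_k}\big)\,d\bx\le c_\Delta\int_G\psi\big(\abs{\nabla u}\big)\,d\bx+c_\Delta\,\rho_\psi(\nabla u_k-\nabla u).
\end{equation*}
Letting $k\to\infty$ on the right, and passing to a subsequence along which $u_k\to u$ a.e. and applying Fatou's lemma on the left, yields \eqref{eq:Poincare_2} with $c=c_\Delta$ depending only on $\Delta_2(\psi)$. (The hypothesis $\Delta_2(\psi^*)<\infty$ is only recorded here to match the standing assumptions; it is not used in this proof.)

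It remains to prove \eqref{eq:Poincare_2} with $c=1$ for $u\in C^\infty_0(G)$. Put $d:=\diameter(G)$; after a translation we may assume $G\subset(0,d)\times\setR^{n-1}$, since the orthogonal projection of $G$ onto the $x_1$--axis is contained in an interval of length at most $d$. Extend $u$ by zero to $\setR^n$ and write $\bx=(x_1,x')$ with $x'\in\setR^{n-1}$. The fundamental theorem of calculus gives $u(x_1,x')=\int_0^{x_1}\partial_1 u(s,x')\,ds$, hence $\abs{u(x_1,x')}\le\int_0^d\abs{\partial_1 u(s,x')}\,ds$ for all $x_1$. By monotonicity of $\psi$ and Jensen's inequality applied with the probability measure $\tfrac1d\,ds$ on $(0,d)$,
\begin{equation*}
  \psi\Big(\frac{\abs{u(x_1,x')}}{d}\Big)\le \psi\Big(\frac1d\int_0^d\abs{\partial_1 u(s,x')}\,ds\Big)\le \frac1d\int_0^d\psi\big(\abs{\partial_1 u(s,x')}\big)\,ds.
\end{equation*}
Integrating in $x_1\in(0,d)$ and then in $x'\in\setR^{n-1}$, using Fubini's theorem, $\abs{\partial_1 u}\le\abs{\nabla u}$ and the monotonicity of $\psi$, and recalling that $u$ and $\nabla u$ are supported in $G$, we get $\int_G\psi(\abs{u}/d)\,d\bx\le\int_G\psi(\abs{\nabla u})\,d\bx$, as claimed.

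There is no genuine obstacle in this argument; the chain of inequalities is routine and the only point deserving care is the density step, where the $\Delta_2$--condition is what makes the modular quantities converge along the approximating sequence. Alternatively one could avoid density altogether: since $\psi(t)/t\to\infty$, for bounded $G$ one has $W^{1,\psi}_0(G)\hookrightarrow W^{1,1}_0(G)$, so a suitable representative of $u$ is absolutely continuous on almost every line parallel to $\be^1$ and vanishes outside $G$, which legitimises the slicing step directly, the rest of the computation being unchanged.
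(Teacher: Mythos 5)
Your argument is correct, but it is genuinely different from what the paper does: the paper does not prove the proposition at all, it simply quotes Lemma~6.3 of~\cite{BBDR2012}, whose proof goes through pointwise estimates involving the Hardy--Littlewood maximal operator. Your route --- reduce by density to $u\in C^\infty_0(G)$, slice along lines parallel to $\be^1$, apply the fundamental theorem of calculus and Jensen's inequality for the convex N-function $\psi$ with the probability measure $\tfrac1d\,ds$ on $(0,d)$ --- is elementary and self-contained, yields the clean constant $c=1$ at the level of smooth functions, and makes transparent that only $\Delta_2(\psi)<\infty$ enters (and only through the limiting step; in fact one could even keep $c=1$ for general $u$ by lower semicontinuity of the modular). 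The details you need to be careful about are exactly the ones you flag: the density of $C^\infty_0(G)$ in $W^{1,\psi}_0(G)$ holds here by the paper's very definition of that space, and norm convergence implies modular convergence precisely because of $\Delta_2(\psi)$. What the maximal-function approach of~\cite{BBDR2012} buys instead is uniformity with the rest of the Orlicz-space toolbox used in this paper (the Bogovski\u{\i} operator, Korn's inequality, the negative-norm estimate of Lemma~\ref{lem:ism}), and it adapts to mean-value versions of the \Poincare{} inequality on balls and John domains where a zero-boundary-value slicing argument is not available; for the particular statement at hand, with $u\in W^{1,\psi}_0(G)$ and $G$ merely open and bounded, your elementary proof is perfectly adequate and arguably preferable.
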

\begin{proof}
  This is Lemma~6.3 in~\cite{BBDR2012} and is based on the
  properties of the maximal function. 
\end{proof}
\begin{lemma}
  \label{lem:ism}
  Let $G \subset \setR^n$ be a bounded John domain and let $\psi$ be
  an N-function with $\Delta_2(\psi), \Delta_2(\psi^*)<\infty$. Then, 
  for all $q \in L^{\psi^*}_0(G)$ we have
  \begin{align*}
    \norm{q}_{L^{\psi^*}_0(G) } &\le c\, \sup_{\norm{\bv}_{
        W^{1,\psi}_0(G)} \le 1} \skp{q}{\divo \bv},
    \\
     \intertext{and also} 
    \int_G \psi^*(\abs{q})\, d\bx &\le \sup_{\bv
      \in W^{1,\psi}_0(G) } \bigg [\int _G q \divo \bv\, d\bx -
    \frac 1c \, \int_G \psi(\abs{\nabla \bv})\, d\bx\bigg ],
  \end{align*}
  where the constants depend only on $\Delta_2(\psi)$,
  $\Delta_2(\psi^*)$, and the John constant of~$G$.  
\end{lemma}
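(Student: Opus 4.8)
The plan is to establish the two inequalities in Lemma~\ref{lem:ism} by combining the surjectivity of the divergence operator on John domains (Proposition~\ref{thm:bogovskii}) with a duality argument. For the first estimate, I would start from the well-known characterization of the Orlicz norm by duality: since $\psi^*$ satisfies the $\Delta_2$-condition, for $q \in L^{\psi^*}_0(G)$ one has $\norm{q}_{L^{\psi^*}(G)} \sim \sup \{ \int_G q\, g\, d\bx \fdg g \in L^\psi(G),\ \norm{g}_{L^\psi(G)} \le 1\}$. The point is to restrict the supremum to $g$ of the form $\divo \bv$ with $\bv \in W^{1,\psi}_0(G)$. Given an arbitrary admissible $g$, I would first replace it by $g - \mean{g}_G \in L^\psi_0(G)$ (this changes nothing since $\mean{q}_G = 0$), and then apply the Bogovskii operator $\Bog$ from Proposition~\ref{thm:bogovskii} to obtain $\bv := \Bog(g - \mean{g}_G) \in W^{1,\psi}_0(G)$ with $\divo \bv = g - \mean{g}_G$ and $\norm{\nabla \bv}_{L^\psi(G)} \le c\, \norm{g - \mean{g}_G}_{L^\psi(G)} \le c\, \norm{g}_{L^\psi(G)} \le c$, where the second inequality uses boundedness of the averaging projection on $L^\psi$ (a consequence of the $\Delta_2$-condition, or equivalently of Jensen's inequality together with Proposition~\ref{lem:Poincare-inequality}). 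Rescaling $\bv$ by the constant $c$ gives an admissible competitor in the supremum on the right-hand side, and $\int_G q\, g\, d\bx = \int_G q\, \divo \bv\, d\bx$, which yields the first claimed inequality.

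For the second, modular version, the natural tool is the Fenchel--Young inequality in the form~\eqref{eq:ineq:young}, reading $q\, s \le \psi^*(q) + \psi(s)$ applied pointwise with $s = \abs{\nabla \bv}$ after an appropriate choice of test field, but organized as a variational principle. Concretely, for fixed $q \in L^{\psi^*}_0(G)$ I would again set $\bv := \Bog q \in W^{1,\psi}_0(G)$, so that $\divo \bv = q$ and, by the modular estimate in Proposition~\ref{thm:bogovskii}, $\int_G \psi(\abs{\nabla \bv})\, d\bx \le c\, \int_G \psi(\abs{q})\, d\bx$. Then this particular $\bv$ makes the bracketed expression on the right-hand side at least
\[
  \int_G q\, \divo \bv\, d\bx - \frac 1c \int_G \psi(\abs{\nabla \bv})\, d\bx \ge \int_G \abs{q}^2\, d\bx \cdot 0 + \int_G q^2\, d\bx \cdots
\]
— more carefully, $\int_G q\,\divo\bv\,d\bx = \int_G q^2\,d\bx$ is not directly what we want; instead one uses that $\int_G q \divo \bv\, d\bx = \int_G \psi^*(\abs q)\,d\bx + \int_G \big(q\,\divo\bv - \psi^*(\abs q)\big)\, d\bx$ and estimates the last term from below using Young's inequality $q\,\divo\bv \le \psi^*(\abs q) + \psi(\abs{\divo\bv})$ together with $\abs{\divo \bv} \le c\,\abs{\nabla \bv}$ and, crucially, absorbing $\int_G \psi(\abs{\nabla\bv})\,d\bx$ by the subtracted term after arranging the constant $c$ in the statement to dominate the one from Proposition~\ref{thm:bogovskii}. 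Choosing the constant $c$ in the lemma as a suitable multiple of $\max$ of the Bogovskii constant and the $\Delta_2$-constants makes the subtracted $\frac 1c \int_G \psi(\abs{\nabla \bv})\, d\bx$ small enough to leave a positive multiple of $\int_G \psi^*(\abs q)\, d\bx$, giving the second inequality.

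The main obstacle I anticipate is the careful bookkeeping of constants in the second inequality: one must interleave the modular bound from Proposition~\ref{thm:bogovskii} (whose constant depends on the John constant and on $\Delta_2(\psi), \Delta_2(\psi^*)$), the pointwise Young inequality~\eqref{eq:ineq:young} (whose splitting constant $c_\vep$ depends on the same $\Delta_2$-data), and the comparison $\abs{\divo \bv} \le \sqrt{n}\,\abs{\nabla\bv}$, in such a way that the final constant $c$ genuinely depends only on $\Delta_2(\psi)$, $\Delta_2(\psi^*)$, and the John constant, and not on $q$. A secondary technical point is justifying the mean-value subtraction step in the first inequality, i.e.\ that $f \mapsto f - \mean{f}_G$ is bounded on $L^\psi(G)$ with constant depending only on $\Delta_2(\psi)$; this follows from Jensen's inequality for the convex function $\psi$ applied to the averaging operator, but should be stated explicitly. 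Everything else — the duality characterization of $\norm{\cdot}_{L^{\psi^*}_0}$ and the elementary manipulations of modulars — is standard once the $\Delta_2$-conditions are in force.
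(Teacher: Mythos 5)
Your proof of the first (norm) inequality is correct and is the standard argument: you pass from the duality characterization of $\norm{\cdot}_{L^{\psi^*}}$ to the restricted supremum over divergences via the Bogovskii operator of Proposition~\ref{thm:bogovskii}, after projecting the test function onto $L^\psi_0(G)$; the boundedness of $g\mapsto g-\mean{g}_G$ on $L^\psi(G)$ is indeed elementary. (The paper itself gives no proof but cites \cite[Lemma~4.3]{BBDR2012}; for this part your argument is the expected one.)

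The second (modular) inequality, however, has a genuine gap. Choosing $\bv=\Bog q$ gives $\int_G q\,\divo\bv\,d\bx=\int_G q^2\,d\bx$, which is not comparable to $\int_G\psi^*(\abs q)\,d\bx$ and need not even be finite for $q\in L^{\psi^*}_0(G)$. Your attempted repair --- writing $\int_G q\,\divo\bv\,d\bx=\int_G\psi^*(\abs q)\,d\bx+\int_G\big(q\,\divo\bv-\psi^*(\abs q)\big)\,d\bx$ and ``estimating the last term from below by Young's inequality'' --- cannot work: Young's inequality $q\,\divo\bv\le\psi^*(\abs q)+\psi(\abs{\divo\bv})$ bounds that remainder from \emph{above}, and no reverse inequality holds (the remainder equals $-\psi^*(\abs q)$ wherever $\divo\bv$ vanishes). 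The correct test function is the one realizing \emph{equality} in Young's inequality: set $g:=(\psi^*)'(\abs q)\,q/\abs{q}$ (extended by $0$ where $q=0$) and $\bv:=\Bog(g-\mean{g}_G)$. Since $\mean{q}_G=0$, one gets $\int_G q\,\divo\bv\,d\bx=\int_G\abs{q}\,(\psi^*)'(\abs q)\,d\bx=\int_G\psi^*(\abs q)+\psi(\abs g)\,d\bx$, using the equality case $t(\psi^*)'(t)=\psi^*(t)+\psi\big((\psi^*)'(t)\big)$. On the other hand, the modular estimate of Proposition~\ref{thm:bogovskii}, Jensen's inequality for the mean, and $\psi\big((\psi^*)'(t)\big)\le C\,\psi^*(t)$ (a consequence of the $\Delta_2$-conditions) give $\int_G\psi(\abs{\nabla\bv})\,d\bx\le C'\int_G\psi(\abs g)\,d\bx$, so taking $c\ge C'$ in the statement the subtracted term $\tfrac1c\int_G\psi(\abs{\nabla\bv})\,d\bx$ is absorbed by $\int_G\psi(\abs g)\,d\bx$ and the bracket is at least $\int_G\psi^*(\abs q)\,d\bx$. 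With this substitution your remaining bookkeeping goes through, and the constant depends only on $\Delta_2(\psi)$, $\Delta_2(\psi^*)$, and the John constant of $G$.
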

\begin{proof}
  This is Lemma~4.3 in~\cite{BBDR2012} and is based on the
  properties of the divergence operator.
\end{proof}
We conclude this section by the following anisotropic embedding theorem.
\begin{theorem}
  \label{thm:aniso}
  Let $P\in\partial\Omega$, $g\in W^{1,1}(\Omega_P)$ with $\spt
  g\subset\spt\xi_P$. Let all tangential derivatives $\partial_\tau g$
  satisfy $\partial_\tau g\in L^q(\Omega_P)$, $q>1$, and let also
  $\partial_3 g\in L^r(\Omega_P)$, $r>1$, with
  $\frac{2}{q}+\frac{1}{r}>1$. Then $g\in L^s(\Omega_P)$ with $s$
  given by
  \begin{equation*}
    \frac1s=\frac13 \bigg(\frac{2}q+\frac1r-1\bigg),
  \end{equation*}
    and the following inequalities hold true
  \begin{equation}
    \label{eq:troisi}
    \begin{aligned}
      &      \|g\|_{s,\Omega} \le c\, 
      \|\partial
      _{\tau_1}g\|_{q,\Omega_{P}}^{1/3}  \|\partial _{\tau_2}g\|_{q,\Omega_{P}}^{1/3}\|\partial _3
      g\|_{r,\Omega_{P}}^{1/3}\,,
      \\
      &    \|g\|_{s,\Omega} \le c\, \big (
      \|\partial
      _{\tau_1}g\|_{q,\Omega_{P}} +\|\partial _{\tau_2}g\|_{q,\Omega_{P}}+\|\partial _3
      g\|_{r,\Omega_{P}}\big )\,,
    \end{aligned}
  \end{equation}
  where $c$ depends only on $q$ and $r$.
\end{theorem}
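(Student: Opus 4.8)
The plan is to prove the anisotropic embedding of Theorem~\ref{thm:aniso} by a Gagliardo--Nirenberg type argument adapted to the tubular coordinates near $\partial\Omega$. First I would flatten the boundary: since $\spt g\subset \spt\xi_P\subset \tfrac34\Omega_P$, introduce the local diffeomorphism $y'=x'$, $y_3=x_3-a_P(x')$ which straightens $\Omega_P$ onto a box $\omega\times(0,L)\subset\setR^2\times\setR$, with Jacobian identically one (the map is volume preserving). Under this change, by~\eqref{eq:1} the tangential derivatives $\partial_{\tau_1}g,\partial_{\tau_2}g$ become exactly the flat partials $\partial_{y_1}\tilde g,\partial_{y_2}\tilde g$ of the pulled-back function $\tilde g(y):=g(x(y))$, while $\partial_3 g=\partial_{y_3}\tilde g$ since $\partial_3$ is unchanged. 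Because $a_P\in C^{2,1}$ with $\abs{\nabla a_P}<r_P$ bounded, all $L^q$ and $L^r$ norms are preserved up to constants depending only on $\norm{a_P}_{C^{2,1}}$ — and in fact, since the Jacobian is one, with constant exactly one for the relevant first-order quantities; thus it suffices to prove the inequality for $\tilde g$ on the box $Q=\omega\times(0,L)$, extended by zero to all of $\setR^3$ (the zero extension is legitimate because $\spt\tilde g\Subset Q$).

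On $\setR^3$ the anisotropic inequality I want is the classical one: for $g$ with compact support,
\begin{equation*}
  \|g\|_{L^s(\setR^3)}\le c\,\|\partial_1 g\|_{L^q}^{1/3}\|\partial_2 g\|_{L^q}^{1/3}\|\partial_3 g\|_{L^r}^{1/3},\qquad \frac1s=\frac13\Bigl(\frac2q+\frac1r-1\Bigr),
\end{equation*}
valid when the right-hand exponent is positive, i.e. $\tfrac2q+\tfrac1r>1$. The standard route is the Gagliardo--Nirenberg--Sobolev interpolation: one first proves it for $q=r=1$, $s=3/2$, by the classical Loomis--Whitney / Gagliardo slicing argument (write $\abs{g}^{3/2}$ as a product of three factors, each depending on only two of the three variables via $\abs{g(x)}\le \int\abs{\partial_i g}\,dx_i$, and integrate successively using the generalized H\"older inequality), and then upgrades the exponents by applying the case $q=r=1$ to a power $\abs{g}^{\beta-1}g$ with a suitable $\beta>1$ and distributing derivatives by the chain rule and H\"older. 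This yields~\eqref{eq:troisi}$_1$; the symmetrized form~\eqref{eq:troisi}$_2$ then follows from the elementary inequality $abc\le \tfrac13(a^3+b^3+c^3)$ (equivalently, between the geometric and arithmetic means). Since the zero extension of $\tilde g$ is compactly supported in $\setR^3$ and lies in $W^{1,1}$ with the stated integrability of its partials, the $\setR^3$ inequality applies verbatim, and pulling back gives the claim on $\Omega_P$.

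The condition $\tfrac2q+\tfrac1r-1>0$ is exactly what guarantees $s<\infty$ and makes the interpolation exponent admissible; the hypothesis $g\in W^{1,1}(\Omega_P)$ together with $q,r>1$ and boundedness of $\Omega_P$ ensures all the integrals appearing in the slicing argument are finite, so no density/approximation subtlety arises beyond mollifying $\tilde g$ to justify the pointwise fundamental-theorem-of-calculus bounds and then passing to the limit. The main obstacle — really the only nonroutine point — is bookkeeping the coordinate change carefully enough to be sure that it is the \emph{tangential} derivatives $\partial_\tau g$ (not the plain $\partial_\alpha g$) that transform into the flat partials, which is precisely the content of formula~\eqref{eq:1}; everything else is the classical anisotropic Gagliardo--Nirenberg machinery, which can also simply be invoked from the literature (e.g.\ Troisi's inequality, whence the label).
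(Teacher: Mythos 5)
Your overall strategy — flatten the boundary with the volume-preserving map $(x',x_3)\mapsto(x',x_3-a_P(x'))$, observe via~\eqref{eq:1} that the tangential derivatives become the flat partials $\partial_{y_1},\partial_{y_2}$ while $\partial_3$ is unchanged, and then invoke the anisotropic (Troisi) inequality on $\setR^3$ — is exactly the paper's route, and your identification of the derivative transformation is correct. However, there is one genuine gap: the claim that $\spt\tilde g\Subset Q$ and that one may therefore extend $\tilde g$ by zero. This is false. The cut-off $\xi_P$ satisfies $\chi_{\frac12\Omega_P}\le\xi_P\le\chi_{\frac34\Omega_P}$, so $\spt g$ reaches the boundary portion $\{x_3=a_P(x')\}\subset\partial\Omega$ (and in the intended application $g=\tilde\xi_P\,\bF(\bD\bu)$, which certainly does not vanish there — that is the whole point of a boundary estimate). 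After flattening, $\tilde g$ is compactly supported away from the lateral and top faces of the box but \emph{not} away from the bottom face $\{y_3=0\}$. Extending by zero across $\{y_3=0\}$ would create a jump, so the extension is not in $W^{1,1}(\setR^3)$ and the slicing bound $\abs{g(x)}\le\int\abs{\partial_i g}\,dx_i$ (absolute continuity on lines) fails in the $y_3$ direction.

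The repair is what the paper does: instead of a zero extension, take the \emph{even} reflection $\widetilde G(y',y_3):=G(y',\abs{y_3})$ across $\{y_3=0\}$. This produces a function in $W^{1,1}_0(K)$ for a compact $K\subset\setR^3$, with $\norm{\partial_\alpha\widetilde G}_q\le 2\norm{\partial_\alpha G}_q$ and $\norm{\partial_3\widetilde G}_r\le 2\norm{\partial_3 G}_r$, to which the compactly supported Troisi inequality applies; restricting back to the half-space and undoing the (Jacobian-one) change of variables gives~\eqref{eq:troisi}$_1$, and the additive form follows by Young's inequality exactly as you say. The rest of your argument — the Loomis--Whitney/Gagliardo proof of the flat inequality and the AM--GM step — is fine, and is simply cited rather than reproved in the paper.
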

\begin{proof}
  The theorem is proved in~\cite{troisi} provided the Cartesian
  product of smooth open intervals (in that case one can assume that
  different partial derivatives belong to appropriate Lebesgue
  spaces).  The case of the non-flat boundary can be converted to the
  previous one by the coordinate transformation
  $\Phi:(x',x_{3})\to(x',x_3+a(x'))$. By defining $G:=g\circ\Phi$ one
  obtains a function with compact support in the (closed) upper
  half-space and if one defines $\widetilde{G}$ by an even extension
  of $G$ with respect to the $x_3$ direction, it turns out that
  $\widetilde{G}$ belongs to $W^{1,1}_0(K)$, where $K\subset\setR^3$
  is a compact set.  We then apply~\cite[Theorem~1.2]{troisi} to
  $\widetilde{G}$, which can be approximated by functions in
  $C^\infty_0(\setR^3)$, to obtain
  \begin{equation*}
    \begin{aligned}
      \|G\|_{s,\setR^3_+}\leq \|\widetilde{G}\|_{s}&\leq c
      \|\partial_1\widetilde{G}\|^{1/3}_{q}\|\partial_2\widetilde{G}\|^{1/3}_{q}
      \|\partial_3\widetilde{G}\|^{1/3}_{r} 
      \\
      &\leq 2c
      \|\partial_1{G}\|^{1/3}_{q,\setR^3_+}\|\partial_2{G}\|^{1/3}_{q,\setR^3_+}
      \|\partial_3{G}\|^{1/3}_{r,\setR^3_+}.
    \end{aligned}
  \end{equation*}
  Since the Jacobian of the transformation $\Phi$ is equal to one,
  using the reverse transformation to $\Phi$ one gets the first
  statement of the theorem by a change of variables. In fact, by the
  definition $G:=g\circ\Phi$, it turns out that
  \begin{equation*}
 \partial_\alpha G=(\partial_\alpha g+\partial_\alpha a \,\partial_3
  g)\circ\Phi=(\partial_\tau
  g)\circ \Phi,
\end{equation*}
which is a tangential derivative of $g$ composed with $\Phi$, and also
$\partial_3 G=(\partial_3 g)\circ\Phi$.  The additive version is then
proved by Young's inequality.
\end{proof}
%
%
\section{Proof of Theorem~\ref{thm:MT1}}
\label{sec:Stokes}
We assume that $\T$ satisfies the assumption of Theorem~\ref{thm:MT1},
i.e.~$\bS =\bS^0 + \bS^1$ with $\bS^0$ having quadratic-structure and
$\bS^1$ having $(p,\delta)$-structure. Moreover, $\ff$ belongs to
$L^2(\Omega)$.

From the properties of $\bS$ and the standard theory of monotone
operators we easily obtain the existence of a unique
$\bu\in
W^{1,2}_{0,\divo}(\O)$, satisfying for all
$\bv\in 
W^{1,2}_{0,\divo}(\O)$
\begin{equation*}
  \intO\T(\bD\bu)\cdot\bD\bv\, d\bx=\intO \ff \cdot \bv\,
  d\bx\,. 
\end{equation*}
Using Proposition~\ref{lem:hammer}, the properties of $\bS$,
\Poincare's and Korn's inequalities as well as Young's inequality, we
obtain that this solution satisfies the a-priori estimate
\begin{align}
  \label{eq:7}
  \kappa_0(2)\intO \abs{\nabla \bu } ^2 \, d\bx+ \kappa_0(p)\intO
  \varphi(\abs{\nabla \bu }) \, d\bx \le c\, \intO \abs{\ff}^{2} \,
  d\bx\,.
\end{align}
Here and in the sequel we denote by $\kappa_i(2)$ and $\kappa_i(p)$,
$i=0,1$, respectively, the constants in Definition~\ref{ass:1} for $2$
and $p$, respectively. Moreover, in the above estimate and in the
sequel all constants can depend on the characteristics of $\bS^0$ and
$\bS^1$, on $\diameter (\Omega)$, $\abs{\Omega}$, on the space
dimension, and on the John constants of $\Omega$.
Finally, the constants can also depend\footnote{This dependence occurs
  in most cases due to a shift change and results in an additive
  constant.} on $\delta_0$ (cf.~Remark~\ref{rem:delta0}). All these
dependencies will not be mentioned explicitly, while the dependence on
other quantities is made explicit.

It is possible to associate to the $\bu$ a unique pressure $\pi\in
L^{2}_0(\O)$ satisfying for all $\bv\in W^{1,2}_0(\O)$
\begin{equation}
  \label{eq:eq-tlak}
  \intO\T(\bD\bu)\cdot\bD\bv-\pi\diver\bv \, d\bx =\intO \ff \cdot
  \bv\, d\bx\,. 
\end{equation}
Using the properties of $\bS$, Lemma~\ref{lem:ism}, $p <2$,
Lemma~\ref{lem:change2}, and Young's inequality we thus obtain the
following estimate
\begin{align}
  \label{eq:est-tlak}
  \intO\abs{\pi }^{2}\, d\bx \le c\,\Big ( 1 + 
  \norm{\ff}^2_2 
\Big )\,.
\end{align}
\subsection{Regularity in tangential directions and in the
  interior} 
\label{sec:tan}
Let us start with the regularity in tangential directions. The
interior regularity follows along the same lines of reasoning, but
with several simplifications.

The main results of this section are summarized in the following
proposition, which ensures local boundary estimates for tangential
derivatives, that depend on $P \in \bou$ only through
$\norm{\xi_P}_{2,\infty}$ and $\norm{a_P}_{C^{2,1}}$.
\begin{Proposition}
\label{prop:tangential1}
Let the assumptions of Theorem~\ref{thm:MT1} be satisfied and let the
local description $a_P$ of the boundary and the localization function
$\xi_P$ satisfy $(b1)$--\,$(b3)$ and $(\ell 1)$
(cf.~Section~\ref{sec:bdr}). Then, there exist functions $M_0\in
C(\setR^{\ge 0}\times \setR^{\ge 0})$, $M_1 \in
C(\setR^{>0}\times\setR^{\ge 0}\times \setR^{\ge 0})$ such that for
every $P\in \partial \Omega$
\begin{equation}
  \label{eq:tang-final}
  \begin{aligned}
    \intO \xi^2_P \bigabs{ \partial_\tau \nabla \bu }^2+& \xi^2_P
    \bigabs{ \nabla\partial_\tau \bu }^2 + \phi (\xi_P
    \abs{\partial_\tau \nabla \bu})+\phi (\xi_P
    \abs{\nabla \partial_\tau \bu})\,d\bx
    \\
    &+ \int_\Omega \xi^2_P \bigabs{\partial _\tau \F (\bD\bu)
    }^2\,d\bx\leq M_0(\norm{\xi_P}_{2,\infty},\norm{a_P}_{C^{2,1}}) 
    \Big(1+\norm{\bff}_2^2
    \Big)\,,
  \end{aligned}
\end{equation}
and 
\begin{align}
  \label{eq:tang-final-pres}
  \intO \xi^2_P \bigabs{ \partial_\tau \pi }^2 \, d\bx \le M_1(\delta^{p-2},
  \norm{\xi_P}_{2,\infty},\norm{a_P}_{C^{1,1}}) \Big ( 1+ \norm{\bff}_2^2
  \Big)\,,
\end{align}
where $\norm{a_P}_{C^{k,1}}$ means
$\norm{a_P}_{C^{k,1}(\overline{B^2_{3/4R_P}(0)})}$, for $k=1,2$.
\end{Proposition}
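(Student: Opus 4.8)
\textbf{Strategy and the velocity estimate~\eqref{eq:tang-final}.}
The plan is to prove both estimates by testing the equations with a localized second tangential difference quotient and exploiting the $(p,\delta)$-structure of $\bS=\bS^0+\bS^1$. Fix $P\in\bou$ and abbreviate $\xi:=\xi_P$, $a:=a_P$; the interior estimate (with $\xi_{00}$) follows by the same, simpler, argument without boundary commutators, so I treat only the boundary piece. To avoid the pressure in~\eqref{eq:tang-final} I use a \emph{solenoidal} test function: setting $g_P:=\divo\difn(\xi^2\difp\bu)$ and $\bv:=\difn(\xi^2\difp\bu)-\Bog(g_P)$, with $\Bog$ the Bogovskii operator of Proposition~\ref{thm:bogovskii} on the John domain $\tfrac34\Omega_P$, one has $\bv\in W^{1,2}_{0,\divo}(\O)$ and the weak formulation gives
\[
  \intO\T(\bD\bu)\cdot\bD\difn(\xi^2\difp\bu)\,d\bx=\intO\T(\bD\bu)\cdot\bD\Bog(g_P)\,d\bx+\intO\ff\cdot\bv\,d\bx.
\]
Transferring $\difn$ by Lemma~\ref{lem:TD3} and commuting $\bD$ past $\difp$ and past the multiplication by $\xi^2$ by Lemma~\ref{lem:TD1}, the left-hand side equals $\intO\xi^2\,\difp\T(\bD\bu)\cdot\difp\bD\bu\,d\bx+R_\T$; by~\eqref{eq:3a} and $\T=\bS^0+\bS^1$ its leading term is bounded below by $c\,I$, where $I:=\intO\xi^2\abs{\difp\bD\bu}^2+\xi^2\abs{\difp\F(\bD\bu)}^2\,d\bx$ and $\F$ is associated with $\bS^1$, while $R_\T$ collects commutator terms carrying factors $\nabla\xi$, $\difp\nabla a$, $\difn\nabla a$ and second tangential differences of $a$ (hence $\norm{a}_{C^{2,1}}$).

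\textbf{Absorbing the remaining terms.}
The point that makes~\eqref{eq:tang-final} $\delta$-independent is that $\divo\bu=0$ forces $g_P$ (computed via Lemma~\ref{lem:TD1}) to consist of commutator terms only; hence, by the $L^2$- and the modular bounds of Proposition~\ref{thm:bogovskii}, the quantities $\norm{\nabla\Bog(g_P)}_2^2$ and $\intO\phi(\abs{\nabla\Bog(g_P)})\,d\bx$ are estimated by $I$, by the modular terms $\intO\phi(\xi\abs{\difp\nabla\bu})\,d\bx$ of~\eqref{eq:tang-final}, and by $\intO\abs{\nabla\bu}^2+\phi(\abs{\nabla\bu})\,d\bx$ (after using the localized Korn inequality, Proposition~\ref{thm:korn_orlicz}, and~\eqref{eq:est-l}). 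Then $\intO\bS^0(\bD\bu)\cdot\bD\Bog(g_P)\,d\bx$ is controlled by $\norm{\bD\bu}_2\norm{\nabla\Bog(g_P)}_2$, and $\intO\bS^1(\bD\bu)\cdot\bD\Bog(g_P)\,d\bx$ by Young's inequality for the pair $(\phi,\phi^*)$ together with $\phi^*(\abs{\bS^1(\bD\bu)})\sim\phi(\abs{\bD\bu})$ (Proposition~\ref{lem:hammer}); the terms of $R_\T$ and $\intO\ff\cdot\bv\,d\bx$ are treated likewise, using~\eqref{eq:3}, the refined Young inequality~\eqref{eq:ineq:young} with the shifted N-function $\phi_{\abs{\bD\bu}+\abs{\Delta^+\bD\bu}}$, and the change-of-shift Lemmas~\ref{lem:change}--\ref{lem:change2}. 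Choosing the arising parameters $\epsilon$ small, one absorbs $\epsilon\,I$ and $\epsilon\intO\phi(\xi\abs{\difp\nabla\bu})\,d\bx$ into the left-hand side and bounds everything else by $c\,(1+\norm{\ff}_2^2)$ via the a priori estimate~\eqref{eq:7}. Since no $\delta$-dependent constant is ever used, this yields~\eqref{eq:tang-final} with $M_0$ depending only on $\norm{\xi}_{2,\infty}$ and $\norm{a}_{C^{2,1}}$.

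\textbf{The pressure estimate~\eqref{eq:tang-final-pres}.}
For this I use the inf--sup characterization of the pressure, Lemma~\ref{lem:ism}, on $\tfrac34\Omega_P$ applied to $\xi\,\difp\pi$ minus its mean value (which equals $\intO(\difn\xi)\,\pi\,d\bx$ up to normalization, hence is controlled by $\norm{\pi}_2$). Testing~\eqref{eq:eq-tlak} with $\difn(\xi\bv)$ for $\bv\in W^{1,2}_0(\tfrac34\Omega_P)$ with $\norm{\nabla\bv}_2\le1$ and transferring differences as above yields
\[
  \intO\xi\,\difp\pi\,\divo\bv\,d\bx=\intO\xi\,\difp\T(\bD\bu)\cdot\bD\bv\,d\bx+(\text{terms in }\ff,\ \pi,\ \nabla\xi,\ \nabla a),
\]
so that $\norm{\xi\,\difp\pi}_2\le c\,\norm{\xi\,\difp\T(\bD\bu)}_2+c(\norm{\ff}_2,\norm{\pi}_2,\norm{\xi}_{2,\infty},\norm{a}_{C^{1,1}})$. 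By~\eqref{eq:3} one has $\abs{\difp\T(\bD\bu)}\le c\,\abs{\difp\bD\bu}+c\,\delta^{(p-2)/2}\abs{\difp\F(\bD\bu)}$, whence $\norm{\xi\,\difp\T(\bD\bu)}_2^2\le c\,(1+\delta^{p-2})$ times the left-hand side of~\eqref{eq:tang-final}, which is already controlled; together with~\eqref{eq:est-tlak} for $\norm{\pi}_2$ this proves~\eqref{eq:tang-final-pres}, the factor $\delta^{p-2}$ appearing exactly here and only $\norm{a}_{C^{1,1}}$ being needed because no second tangential differences of $a$ occur at this step.

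\textbf{Main obstacle.}
The delicate part is to disentangle the three coupled difficulties --- curved tangential differentiation (which makes $\bD$, $\divo$ and $\partial_\tau$ fail to commute and generates, through Lemma~\ref{lem:TD1}, a whole hierarchy of commutator terms), the solenoidal constraint with its pressure (which cannot be bounded in $L^2$ better than with a factor $\delta^{p-2}$), and the nonlinearity of $\bS$ (which forces the bookkeeping to be carried out with shifted N-functions) --- in such a way that the velocity estimate~\eqref{eq:tang-final} stays independent of $\delta$. The mechanism for this is to eliminate the pressure from~\eqref{eq:tang-final} by the solenoidal Bogovskii correction, so that only the $\delta$-uniform machinery of Propositions~\ref{lem:hammer} and~\ref{thm:bogovskii} enters, leaving the unavoidable $\delta^{p-2}$ confined to the separate estimate~\eqref{eq:tang-final-pres}.
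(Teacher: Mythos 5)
Your proposal is correct in substance; the pressure estimate \eqref{eq:tang-final-pres} follows exactly the paper's route (Lemma~\ref{lem:ism} applied to $\xi\,\difp\pi$ minus its mean, the mean handled via Lemma~\ref{lem:TD3}, and the $\delta^{p-2}$ entering precisely through $\abs{\difp\T(\bD\bu)}\lesssim\abs{\difp\bD\bu}+\delta^{(p-2)/2}\abs{\difp\F(\bD\bu)}$). For the velocity estimate \eqref{eq:tang-final}, however, you take a genuinely different path: you make the test function solenoidal by a Bogovskii correction $\Bog(g_P)$ and thereby eliminate the pressure, whereas the paper tests the pressure-inclusive formulation \eqref{eq:eq-tlak} directly with $\difn(\xi^2\difp\bu)$, keeps the resulting pressure terms $I_8$--$I_{14}$, and controls them by the $\delta$-independent a priori bound \eqref{eq:est-tlak} on $\norm{\pi}_2$ together with Young's inequality against the quadratic terms supplied by $\bS^0$. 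Both routes yield a $\delta$-free $M_0$; your rationale in the last paragraph is therefore slightly misleading, since in Theorem~\ref{thm:MT1} the pressure \emph{is} bounded in $L^2$ uniformly in $\delta$ and its presence in the velocity step costs nothing --- the $\delta^{p-2}$ is confined to \eqref{eq:tang-final-pres} in either approach. What your route buys is independence from \eqref{eq:est-tlak} in the velocity step (useful when no good a priori pressure bound is available); what it costs is the extra bookkeeping for $\nabla\Bog(g_P)$: when estimating $g_P=\divo\difn(\xi^2\difp\bu)$ you must treat terms like $\difn\bigl[\nabla(\xi^2)\cdot\difp\bu\bigr]$ by the bound $\norm{\difn \bw}_2\le\norm{\nabla \bw}_2$ \emph{before} expanding, so that the cut-off $\xi$ survives in front of $\nabla\difp\bu$ (this is exactly how the paper handles $I_5$ and $I_{11}$); expanding with the discrete product rule first would produce $\difn\difp\bu$ against $\nabla(\xi^2)_{-\tau}$ and lose the localization. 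With that caveat, and with the Korn step (Proposition~\ref{thm:korn_orlicz} and the analogue of Lemma~\ref{lem:p-est}) needed to pass from $\difp\bD\bu$ to the full gradients appearing in \eqref{eq:tang-final}, your argument closes.
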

As usual in the study of boundary regularity we need to localize and
to use appropriate test functions. Consequently, let us fix
$P\in \partial \Omega$
and in $\O_P$ use $\xi:=\xi_P$, $a:=a_P$, 
while $h\in(0,\frac{R_{P}}{16})$,  as in Section~\ref{sec:bdr}. We use as test function
$\bv$ in the weak formulation~\eqref{eq:eq-tlak}
\begin{equation*}
  \bv=\difn{(\xi\,\bpsi)},
\end{equation*}
(more precisely $\xi$ is extended by zero for 
$\bx\in\Omega\backslash\Omega_P$, in order to have a global function
over $\Omega$) with $\bpsi\in W^{1,2}_0(\O)$ to get, with the help of
Lemma~\ref{lem:TD1} and Lemma~\ref{lem:TD3}, the following equality
\begin{equation}
  \label{eq:dtTx1}
  \begin{aligned}
    \intO& \difp{\T(\bD\bu)}\cdot
    \bD(\xi\,\bpsi)+\T(\bD\bu)\cdot\big(\tran{(\partial_3(\xi\,\bpsi))}
    \otimess\difn\nabla a\big) -\pi\divo \difn(\xi\,\bpsi) \, d\bx
    \\
    &\qquad\qquad=\intO \ff\cdot\difn(\xi\, \bpsi)\, d\bx\,.
  \end{aligned}
\end{equation}
Due to the fact that $\bu\in W^{1,2}_{0,\divo}(\O)$ we can set
$\bpsi=\xi\,\difp (\bu_{|\widetilde{\Omega}_P})$ in $\Omega_P$ (and
zero outside), hence as a test function we can consider the following
vector field
\begin{equation*}
  \bv=\difn{(\xi^2\difp(\bu_{|\widetilde{\Omega}_P}))},
\end{equation*}
where $\widetilde{\Omega}_P:=\frac{1}{2}\Omega_P$, for the definition
recall~\eqref{eq:scaled_omega_P}.  Since $\bpsi$ has zero trace on
$\O_P$, we get $\bv\in W^{1,2}_0(\Omega_P)$, for small enough $h>0$.
\begin{remark}
  As a general disclaimer, we stress that in the sequel we will use
  the convention that functions are extended by zero off their proper
  set of definition, when needed. We avoid making this explicit in the
  sequel to avoid cumbersome expressions.
\end{remark}
Using Lemma~\ref{lem:TD1}--Lemma~\ref{lem:TD3} we thus get the
following identity
\begin{equation}
  \label{eq:dtTx2}
  \begin{aligned}
    \intO& \xi^2\difp{\T(\bD\bu)}\cdot \difp \bD\bu \, d\bx
    \\
    =&-\intO \T(\bD\bu)\cdot\big(\xi ^2 \difp \partial _3 \bu
    -(\xi_{-\tau } \difn\xi +\xi \difn\xi) \partial_3\bu \big)
    \otimess\difn\nabla a\, d\bx
    \\
    &-\intO \T(\bD\bu)\cdot \xi^2 \trap{(\partial _3\bu)}\otimess
    \difn{\difp{\nabla a}} - \T(\bD\bu)\cdot\difn{\big(2\xi \nabla \xi
      \otimess \difp\bu\big)}\, d\bx
    \\
    &+\intO \T(\trap{(\bD\bu)})\cdot \big(2 \xi \partial_3\xi \difp\bu +
    \xi^2 \difp\partial_3\bu \big)\otimess\difp\nabla a \, d\bx
    \\
    &-\intO \pi \big (\xi^2 \difn{ \difp{\nabla a}} -(\tran{\xi}\difn
    {\xi} + \xi \difn{\xi} )\difn{\nabla a}\big ) \cdot \partial _3\bu
    \, d\bx
    \\
    &-\intO \pi\big ( \difn{(2\xi \nabla \xi\cdot\difp{\bu})} -\xi^2
    \difp{\partial_3} \bu \cdot \difp{\nabla a} \big) \, d\bx
    \\
    &+\intO \trap{\pi}\big ( 2\xi \partial_3 \xi \difp{\bu} +\xi^2
    \difp{\partial_3} \bu \big )\cdot \difp{\nabla a}  \, d\bx
    \\
    &+\intO\ff\cdot\difn(\xi^2 \difp \bu)\, d\bx=:\sum_{j=1}^{15} I_j\,.
  \end{aligned}
\end{equation}
The term providing the information concerning the regularity of the
solution is the integral kept on the left-hand side. From the
assumption on $\bS$ and Proposition~\ref{lem:hammer} it may be
estimated from below by
\begin{align}
  \begin{aligned}
    \label{eq:odhadT2}
    \intO\xi ^2 \difp{\T(\bD\bu)}\cdot \difp \bD\bu \, d\bx &\geq
    c\,\intO \xi^2 \bigabs{ \difp \bD\bu }^2 +\xi^2 \bigabs{ \difp \F
      (\bD\bu) }^2\, d\bx \,. 
  \end{aligned}
\end{align}
%
We take advantage of the restriction $p\le 2$, especially to 
gain further information from the right-hand side
of~\eqref{eq:odhadT2}, as in the following lemma.
\begin{lemma}
  \label{lem:p-est}
  Let $\bF$ be given by~\eqref{eq:def_F} for some $p \in (1,2]$ and
  $\delta\ge 0$. Then, for $\xi , a$ as above and
  $\bu \in W^{1,p}_{0}(\Omega)$ we have\footnote{Note, that the
    constants here do not depend on $\delta_0$.}
  \begin{equation*}
    \begin{aligned}
      \intO \phi\big(\xi \abs{\nabla \difp\bu}\big) + \phi\big(\xi
      \abs{ \difp \nabla \bu}\big)\, d\bx &\le c\,\intO \xi^2 \bigabs{
        \difp \F (\bD\bu) }^2 \, d\bx
      \\
      &\quad + c( \norm{\xi}_{1,\infty},
      \norm{a}_{C^{1,1}})\hspace*{-4mm} \int_{\Omega\cap \spt
        \xi}\hspace*{-4mm} \phi \big (\abs{ \nabla\bu }\big ) \,
      d\bx\,. \hspace*{-3mm}
    \end{aligned}
  \end{equation*}
\end{lemma}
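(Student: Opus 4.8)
The plan is to bound the two modular quantities $\phi(\xi|\nabla\difp\bu|)$ and $\phi(\xi|\difp\nabla\bu|)$ pointwise by the natural quantity $\xi^2|\difp\F(\bD\bu)|^2$ plus a lower-order term. First I would note that, by Lemma~\ref{lem:TD1}, the two gradients differ only by the commutator $\trap{(\partial_3\bu)}\otimes\difp{\nabla a}$, whose size is controlled by $\norm{a}_{C^{1,1}}|\partial_3\bu|\le\norm{a}_{C^{1,1}}|\nabla\bu|$ (using $|\difp\nabla a|\le c(a)$, valid for $h$ small since $a\in C^{2,1}$). Since $p\le 2$, the function $t\mapsto\phi(t)$ is subquadratic and satisfies $\phi(s+t)\le c(\phi(s)+\phi(t))$ by the $\Delta_2$-condition; therefore $\phi(\xi|\difp\nabla\bu|)\le c\,\phi(\xi|\nabla\difp\bu|)+c\,\phi(\xi|\nabla\bu|)$ and conversely, so it suffices to estimate $\phi(\xi|\nabla\difp\bu|)$, and after applying Korn's inequality in the Orlicz space $L^\phi$ (Proposition~\ref{thm:korn_orlicz}, applicable because $\xi\difp\bu$ has zero trace on $\Omega_P$) it suffices to estimate $\int_\Omega\phi(|\bD(\xi\difp\bu)|)\,d\bx$, which via Lemma~\ref{lem:TD1} and $\Delta_2$ again reduces to $\int_\Omega\phi(\xi|\difp\bD\bu|)\,d\bx$ plus the term $c(\norm{\xi}_{1,\infty})\int_{\Omega\cap\spt\xi}\phi(|\nabla\bu|)\,d\bx$.

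It remains to show $\phi(\xi|\difp\bD\bu|)\le c\,\xi^2|\difp\F(\bD\bu)|^2$ pointwise. Here I use the restriction $p\le 2$ decisively: from \eqref{eq:est-l} we have $\phi(\lambda t)\le\max(1,\lambda^2)\phi(t)$, so $\phi(\xi|\difp\bD\bu|)\le\xi^2\,\phi(|\difp\bD\bu|)$ whenever $\xi\le 1$ (or simply $\le\max(1,\xi^2)\phi(|\difp\bD\bu|)$). Next, by \eqref{eq:3} (the equivalence $\abs{\difp\F(\bD\bu)}\sim(\delta+|\bD\bu|+|\Delta^+\bD\bu|)^{(p-2)/2}|\difp\bD\bu|$) together with \eqref{eq:phi''}, I obtain
\begin{equation*}
  \phi(|\difp\bD\bu|)\le\phi'(|\difp\bD\bu|)\,|\difp\bD\bu|
  \le c\,(\delta+|\difp\bD\bu|)^{p-2}|\difp\bD\bu|^2,
\end{equation*}
and since $p-2\le 0$ we have $(\delta+|\difp\bD\bu|)^{p-2}\le(\delta+|\bD\bu|+|\Delta^+\bD\bu|)^{p-2}$, because $|\difp\bD\bu|\le h^{-1}(|\trap{\bD\bu}|+|\bD\bu|)$... more cleanly, $|\Delta^+\bD\bu|\le$ is the wrong direction, so instead I use the first equivalence in \eqref{eq:3} directly: $|\difp\bD\bu|^2(\delta+|\bD\bu|+|\Delta^+\bD\bu|)^{p-2}\sim|\difp\F(\bD\bu)|^2$, and comparing exponents, $(\delta+|\difp\bD\bu|)^{p-2}\le c(\delta+|\bD\bu|+|\Delta^+\bD\bu|)^{p-2}$ will follow once one checks $|\difp\bD\bu|\le c\,h^{-1}(|\bD\bu|+|\Delta^+\bD\bu|)$, hence in the regime $h^{-1}\ge 1$ one absorbs the constant; a clean way is to observe that $\phi(|\difp\bD\bu|)=\phi_0(|\difp\bD\bu|)\le c\,(\delta+|\bD\bu|+|\Delta^+\bD\bu|+|\difp\bD\bu|)^{p-2}|\difp\bD\bu|^2$ and then bound $|\difp\bD\bu|$ in the base by the other two terms using $|\Delta^+\bD\bu|=h|\difp\bD\bu|\le|\difp\bD\bu|$ for $h<1$, giving the needed domination. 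Combining these pointwise bounds and integrating over $\Omega$ yields the claim.

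The main obstacle I anticipate is exactly the last pointwise comparison: relating the "shifted" weight $(\delta+|\difp\bD\bu|)^{p-2}$ appearing in $\phi(|\difp\bD\bu|)$ to the weight $(\delta+|\bD\bu|+|\Delta^+\bD\bu|)^{p-2}$ that is built into $|\difp\F(\bD\bu)|^2$ via \eqref{eq:3}. In the superquadratic case this is trivial (weights help), but for $p<2$ one must be careful that the difference quotient $\difp\bD\bu$ does not contribute a smaller weight than $|\bD\bu|+|\Delta^+\bD\bu|$; this is handled by noting $|\Delta^+\bD\bu|\le|\difp\bD\bu|$ for $h<1$, so the base in the shifted $N$-function $\phi_0$ evaluated at $|\difp\bD\bu|$ is comparable to the base $|\bD\bu|+|\Delta^+\bD\bu|$ of the quantity in \eqref{eq:3} up to the $\delta$-shift, which is exactly what \eqref{eq:3} is designed to absorb. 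Everything else — the commutator manipulations, the $\Delta_2$-splittings, and Korn's inequality — is routine.
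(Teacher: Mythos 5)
The reduction steps in your plan (commutator via Lemma~\ref{lem:TD1}, $\Delta_2$-splitting, Korn's inequality for $\xi\,\difp\bu$, arriving at $\int\phi(\xi\abs{\difp\bD\bu})\,d\bx$) coincide with the paper's proof and are fine. The gap is in the final, decisive step: the pointwise bound $\phi(\xi\abs{\difp\bD\bu})\le c\,\xi^2\abs{\difp\bF(\bD\bu)}^2$ that you are trying to prove is \emph{false} for $p<2$, and no rearrangement of the bases can make it true. By \eqref{eq:3a}, $\abs{\difp\bF(\bD\bu)}^2\sim(\delta+\abs{\bD\bu}+\abs{\Delta^+\bD\bu})^{p-2}\abs{\difp\bD\bu}^2$, whereas $\phi(\abs{\difp\bD\bu})\sim(\delta+\abs{\difp\bD\bu})^{p-2}\abs{\difp\bD\bu}^2$. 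In a region where $\bD\bu$ is large but nearly constant in the tangential direction (say $\abs{\bD\bu}=M\gg1$, $\abs{\difp\bD\bu}=1$), the left-hand side is of order $1$ while the right-hand side is of order $M^{p-2}\to0$: for $p<2$ a large $\abs{\bD\bu}$ \emph{damps} the weight in $\abs{\difp\bF}^2$ but not the weight in $\phi(\abs{\difp\bD\bu})$. Your attempted fixes both fail for the same reason: $(\delta+t)^{p-2}\le c(\delta+A+t)^{p-2}$ goes the wrong way when $p-2<0$, and replacing $\abs{\difp\bD\bu}=h^{-1}\abs{\Delta^+\bD\bu}$ by $\abs{\Delta^+\bD\bu}$ in a base costs a factor $h^{2-p}$ that is not uniform in $h$. (A secondary error: $\phi(\xi t)\le\xi^2\phi(t)$ for $\xi\le1$ is also reversed when $p<2$; \eqref{eq:est-l} only gives $\phi(\xi t)\le\xi^p\phi(t)$.)

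The missing idea is that the statement of the lemma allows an \emph{additive} lower-order term, and the correct tool is the change-of-shift inequality, Lemma~\ref{lem:change2}: with $\psi=\phi$, $\bP=0$ and $\abs{\bQ}=\abs{\bD\bu}+\abs{\Delta^+\bD\bu}$ one gets
\begin{equation*}
  \phi(\xi\abs{\difp\bD\bu})\;\le\; c\,\phi_{\abs{\bD\bu}+\abs{\Delta^+\bD\bu}}(\xi\abs{\difp\bD\bu})+c\,\phi(\abs{\bD\bu}+\abs{\Delta^+\bD\bu})\,.
\end{equation*}
The first term is $\sim(\delta+\abs{\bD\bu}+\abs{\Delta^+\bD\bu}+\xi\abs{\difp\bD\bu})^{p-2}\,\xi^2\abs{\difp\bD\bu}^2$; here $p\le2$ is used legitimately, namely to \emph{drop} $\xi\abs{\difp\bD\bu}$ from the base (which only increases the weight), and the factor $\xi^2$ comes from the quadratic factor $t^2$ in $\phi_a(t)\sim(\delta+a+t)^{p-2}t^2$, not from pulling $\xi$ out of $\phi$. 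This yields $c\,\xi^2\abs{\difp\bF(\bD\bu)}^2$ by \eqref{eq:3a}. The second, additive term is exactly what becomes, after a translation argument and integration, the term $c(\norm{\xi}_{1,\infty},\norm{a}_{C^{1,1}})\int_{\Omega\cap\spt\xi}\phi(\abs{\nabla\bu})\,d\bx$ on the right-hand side of the lemma. Without this additive correction the estimate you are aiming for cannot hold.
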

\begin{proof}
  Using Lemma~\ref{lem:TD1} and the convexity of $\phi$ we see
  \begin{equation}
    \label{eq:p-est1}
    \begin{aligned}
      \intO &\phi (\xi \abs{\difp\nabla \bu}) + \phi (\xi \abs{\nabla
        \difp \bu})\, d\bx
      \\
      &\le c\,\intO \phi (\xi \abs{\nabla \difp \bu}) + \phi (\xi\abs{
        \trap{(\partial_3 \bu)} \difp \nabla a })\, d\bx
      \\
      &\le c\,\intO \phi (\xi \abs{\nabla \difp \bu}) \, d\bx + c(
      \norm{a}_{C^{1,1}}) \hspace*{-4mm} \int_{\Omega\cap \spt
        \xi}\hspace*{-4mm} \phi (\abs{\nabla \bu})\, d\bx\,,
    \end{aligned}
  \end{equation}
  where we also used a change of variables in the estimate of the last
  term. To treat the first term from the right-hand side we use the
  following identity
  \begin{equation*}
    \xi\, \nabla \difp \bu = \nabla (\xi\, \difp \bu )
    -\nabla\xi\otimes \difp \bu,
  \end{equation*}
 and consequently we get
 \begin{equation*}
   \begin{aligned}
     \intO \phi (\xi \abs{\nabla \difp \bu}) \, d\bx &\le c \intO \phi
     (\abs{\bD (\xi \,\difp \bu)}) \, d\bx + c(\norm{\xi}_{1,\infty})
     \hspace*{-4mm} \int_{\Omega\cap \spt
       \xi}\hspace*{-4mm} \phi (\abs{\nabla \bu}) \, d\bx \,,
   \end{aligned}
 \end{equation*}
 where we used Korn's inequality
 (cf. Proposition~\ref{thm:korn_orlicz}) and
 \begin{equation}
   \label{eq:diff}
   \int_{\Omega\cap \spt \xi} \hspace*{-4mm}\phi(\abs{\difpm\bu}) \, d\bx \le 
   \int_{\Omega\cap \spt \xi} \hspace*{-4mm}\phi(\abs{\nabla\bu}) \, d\bx \,, 
 \end{equation}
 which can be proved in the same way as in the classical $L^p$-setting
 (cf.~\cite{evans-pde}). Using the identities
\begin{align*}
  \bD(\xi\, \difp\bu) &= \xi\, \bD(\difp \bu) + \nabla \xi \otimess
  \difp \bu
  \\
  & = \xi\, \difp \bD\bu + \trap{(\partial _3 \bu)} \otimess
  \difp\nabla a + \nabla \xi \otimess \difp \bu,
\end{align*}
the properties of $\varphi, \xi, a$ and~\eqref{eq:diff}, we obtain
\begin{equation}
  \label{eq:p-est2}
  \begin{aligned}
    \intO \hspace*{-1mm} \phi (\xi \abs{\nabla \difp \bu}) \, d\bx
    &\le c\hspace*{-1mm}\intO \hspace*{-1mm}\phi (\xi \abs{\difp
      \bD\bu}) \, d\bx
    + c(\norm{\xi}_{1,\infty}, \norm{a}_{C^{1,1}})
    \hspace*{-4mm} \int_{\Omega\cap \spt
      \xi}\hspace*{-4mm} \phi (\abs{\nabla \bu}) \, d\bx \,.
  \end{aligned}
\end{equation}
Using Lemma~\ref{lem:change2},~\eqref{eq:equi1},~\eqref{eq:equi2},
$p\le 2$ (or more generally that $\phi''$ is non-increasing), and
Proposition~\ref{lem:hammer}, we also obtain
\begin{align*}
  \phi (\xi \abs{\difp \bD\bu}) &\le c\big (\phi_{\abs{\bD\bu}
    +\abs{d^+ \bD\bu}} (\xi \abs{\difp \bD\bu}) +\phi (\abs{\bD\bu} +
  \abs{d^+ \bD\bu}) \big )
  \\
  &\sim \big( \delta +\abs{\bD\bu} +\abs{d^+ \bD\bu} +\xi \abs{ \difp
    \bD\bu}\big)^{p-2} \abs{\xi \difp \bD\bu}^2 +\phi (\abs{\bD\bu} +
  \abs{d^+ \bD\bu})
   \\
   &\le c\, \big( \delta +\abs{\bD\bu} +\abs{d^+ \bD\bu}\big)^{p-2}
   \abs{\xi \difp \bD\bu}^2 +c\,\phi (\abs{\bD\bu} + \abs{d^+ \bD\bu})
   \\
   &\sim \bigabs{\difp \bF(\bD\bu )}^2 +\phi (\abs{\bD\bu} + \abs{d^+
     \bD\bu})\,.
 \end{align*}
 Inserting this into~\eqref{eq:p-est2} we get
 \begin{equation*}
   \begin{aligned}
     \intO \phi (\xi \abs{ \nabla \difp \bu}) \, d\bx &\le c \intO
     \bigabs{\difp \bF(\bD\bu )}^2 \, d\bx
     + c(\norm{\xi}_{1,\infty}, \norm{a}_{C^{1,1}})\hspace*{-4mm} \int_{\Omega\cap \spt
       \xi}\hspace*{-4mm} \phi (\abs{\nabla \bu}) \, d\bx \,,
   \end{aligned}
 \end{equation*}
 which together with~\eqref{eq:p-est1} yields the assertion.
\end{proof}
%


We can now give the proof of the regularity of tangential derivatives.
\begin{proof}[Proof of Proposition~\ref{prop:tangential1}]
  Using the previous lemma for the function $\phi$ with exponents $2$
  and $p$ and observing that for any smooth enough function $f$
  \begin{equation}
    \label{eq:two-derivatives-a}
    \nabla\partial_\tau  f=\partial_\tau
    \nabla f+\nabla\partial_{\alpha} a \,\partial_3  f, 
  \end{equation}
  (valid if $a\in C^{2,1}$), we derive
  from~\eqref{eq:dtTx2} and~\eqref{eq:odhadT2} the following estimate
  \begin{align}
    \intO &\xi^2 \bigabs{ \difp \nabla \bu }^2 +\xi^2\bigabs{\nabla
      \difp\bu }^2 +\xi^2 \bigabs{ \difp \F (\bD\bu) }^2 + \phi (\xi
    \abs{\difp\nabla \bu}) + \phi (\xi \abs{\nabla \difp \bu})\, d\bx
    \notag
    \\
    &\qquad \le c \, \sum_{j=1}^{15} I_j+
    c(\norm{\xi}_{1,\infty},\norm{a}_{C^{1,1}}) \hspace*{-3mm}
    \int_{\Omega\cap \spt \xi}\hspace*{-3mm} \abs{\nabla \bu }^2 +\phi
    \big (\abs{ \nabla\bu }\big ) \, d\bx\,.  \label{eq:tang}
  \end{align}
  It is very relevant that we have a full gradient $\nabla\bu$,
  instead of $\bD\bu$ in all terms on the left-hand side, except that
  involving $\bF(\bD\bu)$, at the price of a lower order term in the
  right-hand side, thanks to Lemma~\ref{lem:p-est}.  We are now going
  to estimate the terms $I_j$, for $j=1,\ldots,15$. By using the
  growth properties of $\bS^0$ and $\bS^1$,
  Proposition~\ref{lem:hammer}, and Young's
  inequality~\eqref{eq:ineq:young} we obtain for the terms with $\bS$
  the following estimates, which are valid for any given
  $\varepsilon>0$:
\begin{align}
  \bigabs{I_1} &\le c\, (\norm{a}_{C^{1,1}}) \intO \xi^2 \big (\abs
  {\bD \bu} + \phi'(\abs{\bD\bu})\big ) \abs {\difp \partial _3 \bu
  }\, d\bx \label{eq:i1}
  \\
  &\le c\, (\varepsilon^{-1},\norm{a}_{C^{1,1}}) \hspace*{-3mm}
  \int_{\Omega\cap \spt \xi}\hspace*{-3mm}\abs{\bD\bu}^2 +
  \phi(\abs{\bD\bu}) \, d\bx + \varepsilon \intO \xi^2 \bigabs{ \difp
    \nabla \bu }^2 + \phi (\xi\abs{\difp\nabla \bu}) \, d\bx \,,
    \notag 
\end{align}
\begin{equation}
   \label{eq:i23}
  \begin{aligned}
    \bigabs{I_2 + I_3} &\le c\, (\norm{\xi}_{1,\infty},\norm{a}_{C^{1,1}})
 \hspace*{-3mm} \int_{\Omega\cap \spt
      \xi}\hspace*{-3mm} \big (\abs {\bD \bu} +
    \phi'(\abs{\bD\bu})\big ) \abs {\partial _3 \bu }\,
    d\bx 
    \\
    &\le c\, (\norm{\xi}_{1,\infty},\norm{a}_{C^{1,1}})
    \hspace*{-3mm} \int_{\Omega\cap \spt
      \xi}\hspace*{-3mm}\abs{\nabla \bu}^2 + \phi(\abs{\nabla\bu}) \,
    d\bx\,. 
  \end{aligned}
\end{equation}
The next term requires the full regularity of $a\in C^{2,1}$, in fact
\begin{equation}
  \label{eq:i4}
  \begin{aligned}
    \bigabs{I_4} &\le c\, (\norm{a}_{C^{2,1}}) 
    \hspace*{-3mm} \int_{\Omega\cap \spt
      \xi}\hspace*{-3mm} \big (\abs {\bD \bu} +
    \phi'(\abs{\bD\bu})\big ) \abs {\trap{(\partial _3 \bu)} }\,
    d\bx 
    \\
    &\le c\, (\norm{a}_{C^{2,1}})
    \hspace*{-3mm} \int_{\Omega\cap \spt
      \xi}\hspace*{-3mm}\abs{\nabla \bu}^2 + \phi(\abs{\nabla\bu}) \,
    d\bx\,, 
  \end{aligned}
\end{equation}
where we also used a translation argument for $\partial _3 \bu$.
Also using~\eqref{eq:diff},~\eqref{eq:est-l} we get 
\begin{equation}
  \label{eq:i5}
  \begin{aligned}
    \bigabs{I_5} &\le c(\epsilon ^{-1})\hspace*{-3mm} \int_{\Omega\cap
      \spt \xi}\hspace*{-3mm} \abs {\bD \bu}^2+ \phi(\abs {\bD
      \bu}) \, d\bx
    \\
    &\qquad+ \frac \varepsilon 4 \int_{\Omega\cap \spt
      \xi}\hspace*{-3mm} \bigabs{\difn{\big(2\xi \nabla \xi \otimess
        \difp\bu\big)}}^2 +\phi\big (\abs{\difn{\big(2\xi \nabla \xi
        \otimess \difp\bu\big)}}\big )\, d\bx
    \\
    &\le c(\epsilon ^{-1})\hspace*{-3mm} \int_{\Omega\cap \spt
      \xi}\hspace*{-3mm}\abs{\bD \bu}^2 + \phi(\abs{\bD\bu}) \, d\bx
    \\
    &\qquad + \frac \varepsilon 4  \int_{\Omega\cap \spt
      \xi}\hspace*{-3mm} \bigabs{\nabla{\big(2\xi \nabla \xi
        \otimess \difp\bu\big)}}^2 +\phi\big (\abs{\nabla{\big(2\xi
        \nabla \xi \otimess \difp\bu\big)}}\big )\, d\bx
    \\
    &\le c(\epsilon ^{-1},\norm{\xi}_{2,\infty}) \hspace*{-3mm}
    \int_{\Omega\cap \spt \xi}\hspace*{-3mm}\abs{\nabla \bu}^2 +
    \phi(\abs{\nabla\bu}) \, d\bx
    \\
    &\qquad + \varepsilon \,\max \big(1,4\norm{\xi}_{1,\infty}^2\big )
    \hspace*{-3mm} \int_{\Omega\cap \spt
      \xi}\hspace*{-3mm}\xi^2\abs{\nabla \difp\bu}^2 +
    \phi(\abs{\xi\nabla\difp\bu}) \, d\bx\,,
  \end{aligned}    
\end{equation}
and observe that, by the definition of $\xi=\xi_P$ we have
$\max(1,4\norm{\xi}_{1,\infty}^2)=4\norm{\xi}_{1,\infty}^2$.  
Using also a translation argument for $\bD \bu$
and~\eqref{eq:diff} yields
\begin{equation}
   \label{eq:i6}
  \begin{aligned}
    \bigabs{I_6} &\le c\, (\norm{\xi }_{1,\infty},\norm{a}_{C^{1,1}})
    \hspace*{-3mm} \int_{\Omega\cap \spt
      \xi}\hspace*{-3mm} \big (\abs {\trap{(\bD \bu)}}^2 +
    \phi'(\abs{\trap{(\bD\bu)}})\big ) \abs {\difp{ \bu} }\, d\bx
    \\
    &\le c\, (\norm{\xi }_{1,\infty},\norm{a}_{C^{1,1}})
    \hspace*{-3mm} \int_{\Omega\cap \spt
      \xi}\hspace*{-3mm}\abs{\nabla \bu}^2 + \phi(\abs{\nabla\bu}) \,
    d\bx\,.
  \end{aligned}
\end{equation}
Passing to $I_7$ we have
\begin{align}
  \bigabs{I_7}
  &\le c\, (\norm{a}_{C^{1,1}}) \intO \xi^2 \big (\abs
    {\trap{(\bD \bu)}} + \phi'(\abs{\trap{(\bD\bu)}})\big ) \abs
    {\difp{\partial _3 \bu} }\, d\bx \label{eq:i7}
  \\
  &\le c\, (\varepsilon^{-1},\norm{a}_{C^{1,1}}) \hspace*{-3mm}
    \int_{\Omega\cap \spt \xi}\hspace*{-3mm}\abs{\bD\bu}^2 +
    \phi(\abs{\bD\bu}) \, d\bx + \varepsilon \intO \xi^2 \bigabs{ \difp
    \nabla \bu }^2 + \phi (\xi\abs{\difp\nabla \bu}) \,
    d\bx \,, \notag 
\end{align}
where we again used a translation argument for $\bD \bu$. The terms
with the pressure are estimated, using Young's inequality, as follows
\begin{equation}
   \label{eq:i8}
  \begin{aligned}
    \bigabs{I_8} &\le c\, (\norm{a}_{C^{2,1}})
    \hspace*{-3mm} \int_{\Omega\cap \spt \xi}
    \hspace*{-3mm} \abs
    {\pi}\abs {\partial _3 \bu }\, d\bx \le c\, (\norm{a}_{C^{2,1}})
    \hspace*{-3mm} 
    \int_{\Omega\cap \spt
      \xi}\hspace*{-3mm} \abs{\pi}^2 + \abs{\nabla \bu}^2 \, d\bx\,,
  \end{aligned}
\end{equation}
\begin{equation}
  \label{eq:i910}
  \begin{aligned}
    \bigabs{I_9 +I_{10}} &\le c\, (\norm{a}_{C^{1,1}},\norm{\xi}_{1,\infty}) \hspace*{-3mm}
    \int_{\Omega\cap \spt \xi}\hspace*{-3mm} \abs {\pi}\abs {\partial
      _3 \bu }\, d\bx
    \\
    &\le c\, (\norm{a}_{C^{1,1}},\norm{\xi}_{1,\infty}) 
    \hspace*{-3mm} \int_{\Omega\cap \spt \xi}\hspace*{-3mm}
    \abs{\pi}^2 + \abs{\nabla \bu}^2 \, d\bx\,,
  \end{aligned}
\end{equation}
\begin{equation}
  \label{eq:i11}
  \begin{aligned}
    \bigabs{I_{11}}    &\le c(\epsilon ^{-1})\hspace*{-3mm}
    \int_{\Omega\cap \spt \xi}\hspace*{-3mm} \abs {\pi}^2 \, d\bx +
    \varepsilon \hspace*{-3mm} \int_{\Omega\cap \spt
      \xi}\hspace*{-3mm} \bigabs{\nabla{\big(2\xi \nabla \xi \cdot 
        \difp\bu\big)}}^2 \, d\bx
    \\
    &\le c(\epsilon ^{-1},\norm{\xi}_{2,\infty}) \hspace*{-3mm}
    \int_{\Omega\cap \spt \xi}\hspace*{-3mm}\abs{\pi}^2+
    \abs{\nabla \bu}^2\, d\bx
    + 4\varepsilon \,\norm{\xi}_{1,\infty}^2
    \hspace*{-3mm} \int_{\Omega\cap \spt
      \xi}\hspace*{-3mm}\xi^2\abs{\nabla \difp\bu}^2\, d\bx\,,
  \end{aligned}    
\end{equation}
where we also used~\eqref{eq:diff},
\begin{equation}
  \label{eq:i12}
  \begin{aligned}
    \bigabs{I_{12}} &\le c\, (\norm{a}_{C^{1,1}})
    \hspace*{-3mm} \int_{\Omega\cap \spt \xi}\hspace*{-3mm} \xi^2\abs
    {\pi}\abs {\difp\partial _3 \bu }\, d\bx
    \\
    &\le c\, (\epsilon^{-1},\norm{a}_{C^{1,1}}) \hspace*{-3mm}
    \int_{\Omega\cap \spt \xi}\hspace*{-3mm} \abs{\pi}^2\, d\bx +
    \epsilon \int_{\Omega} \xi^2\abs{\difp\nabla \bu}^2 \, d\bx\,,
  \end{aligned}
\end{equation}
\begin{equation}
  \label{eq:i13}
  \begin{aligned}
    \bigabs{I_{13}} &\le c\, (\norm{\xi}_{1,\infty},\norm{a}_{C^{1,1}})
    \hspace*{-3mm} \int_{\Omega\cap \spt \xi}\hspace*{-3mm} \abs
    {\trap\pi}\abs {\difp \bu }\, d\bx
    \\
    &\le c\, (\norm{\xi}_{1,\infty},\norm{a}_{C^{1,1}}) \hspace*{-3mm}
    \int_{\Omega\cap \spt \xi}\hspace*{-3mm} \abs{\pi}^2+ \abs{\nabla
      \bu}^2 \, d\bx\,,
  \end{aligned}
\end{equation}
where we also used a translation argument for $\pi$ and~\eqref{eq:diff},
\begin{equation}
  \label{eq:i14}
  \begin{aligned}
    \bigabs{I_{14}} &\le c\, (\norm{a}_{C^{1,1}})
    \hspace*{-3mm} \int_{\Omega\cap \spt \xi}
    \hspace*{-3mm} \xi^2\abs
    {\trap \pi}\abs {\difp\partial _3 \bu }\, d\bx
    \\
    &\le c\, (\vep^{-1},\norm{a}_{C^{1,1}}) \hspace*{-3mm}
    \int_{\Omega\cap \spt \xi}\hspace*{-3mm} \abs{\pi}^2\, d\bx +
    \epsilon \int_{\Omega} \xi^2\abs{\difp\nabla \bu}^2 \, d\bx\,,
  \end{aligned}
\end{equation}
where we again used a translation argument for $\pi$
and~\eqref{eq:diff}. The term with the external force is estimated as
follows
\begin{equation}
  \label{eq:i15}
  \begin{aligned}
    \bigabs{I_{15}} &\le c(\epsilon ^{-1})\hspace*{-3mm}
    \int_{\Omega\cap \spt \xi}\hspace*{-3mm} \abs {\ff}^2 \, d\bx +
    \varepsilon \hspace*{-3mm} \int_{\Omega\cap \spt
      \xi}\hspace*{-3mm} \bigabs{\difn{\big(\xi^2 \difp\bu\big)}}^2 \,
    d\bx
    \\
    &\le c(\epsilon ^{-1}) \hspace*{-3mm} \int_{\Omega\cap \spt
      \xi}\hspace*{-3mm} \abs {\ff}^2 \, d\bx + \varepsilon
    \hspace*{-3mm} \int_{\Omega\cap \spt \xi}\hspace*{-3mm}
    \bigabs{\nabla{\big(\xi^2 \difp\bu\big)}}^2 \, d\bx
    \\
    &\le c(\epsilon ^{-1},\norm{\xi}_{1,\infty}) \hspace*{-3mm}
    \int_{\Omega\cap \spt \xi}\hspace*{-3mm}\abs{\ff}^2+
    \abs{\nabla \bu}^2\, d\bx + \varepsilon  \hspace*{-3mm} \int_{\Omega\cap \spt
      \xi}\hspace*{-3mm}\xi^2 \abs{\nabla \difp\bu}^2\, d\bx
  \end{aligned}    
\end{equation}
where we also used~\eqref{eq:diff}.

Choosing in the estimates~\eqref{eq:i1}--\eqref{eq:i15} the constant
$\epsilon>0$
\begin{equation*}
  \epsilon=\frac{1}{56\norm{\xi}_{1,\infty}^2},
\end{equation*}
we can absorb all terms involving tangential increments of
$\nabla\bu$ in the left-hand side of~\eqref{eq:tang}, obtaining the
following fundamental estimate
\begin{align*}
  \intO &\xi^2 \bigabs{ \difp \nabla \bu }^2 +\xi^2\bigabs{\nabla
    \difp\bu }^2 +\xi^2 \bigabs{ \difp \F (\bD\bu) }^2 + \phi (\xi
  \abs{\difp\nabla \bu}) + \phi (\xi \abs{\nabla \difp \bu})\, d\bx
  \notag 
  \\
  &\le c( \norm{\xi}_{2,\infty},\norm{a}_{C^{2,1}}) \hspace*{-3mm}
  \int_{\Omega\cap \spt \xi}\hspace*{-3mm} \abs{\ff}^2 +\abs{\pi}^2 +
  \abs{\nabla \bu}^2 + \phi(\abs{\nabla \bu}) \, d\bx\,,   
\end{align*}
where the right-hand side is finite (and independent of $h$)
since~\eqref{eq:7} and~\eqref{eq:est-tlak} imply
\begin{align}
  \intO &\xi^2 \bigabs{ \difp \nabla \bu }^2 +\xi^2\bigabs{\nabla
          \difp\bu }^2 +\xi^2 \bigabs{ \difp \F (\bD\bu) }^2 + \phi (\xi
          \abs{\difp\nabla \bu}) + \phi (\xi \abs{\nabla \difp \bu})\, d\bx
          \notag
  \\
        &\le M_0( \norm{\xi}_{2,\infty},\norm{a}_{C^{2,1}}) 
          \big (1+\norm{\ff}^2 \big ).   \label{eq:tang1}
\end{align}
From estimate~\eqref{eq:tang1} and the properties of
tangential derivatives recalled in~\eqref{eq:2a}, we thus
obtain~\eqref{eq:tang-final}.
%
%

To prove estimate~\eqref{eq:tang-final-pres} for $\partial _\tau \pi$ 
we start with
\begin{align}
  \label{eq:tangpres1} 
  \intO \xi ^2 \abs{\difp \pi }^2 \, d\bx &\le 2 \intO \abs{\xi\,
    \difp \pi -\mean{\xi \difp \pi}_\O }^2 \, d\bx+ \frac
  2{\abs{\Omega}} \,\Bigabs{\intO {\xi\, \difp \pi } \, d\bx}^2,
\end{align}
to take advantage of the \Poincare\ inequality.  The second term on
the right-hand side is treated as follows
\begin{equation}
  \begin{aligned}
    \label{eq:tangpres2}
    \frac 2{\abs{\Omega}} \,\Bigabs{\intO {\xi \,\difp \pi } \,
      d\bx}^2&= \frac 2{\abs{\Omega}} \,\Bigabs{\int_{\Omega\cap \spt
        \xi}\hspace*{-3mm} \pi\, \difn \xi \, d\bx}^2
\le 2\,\norm{\xi}_{1,\infty} ^2 \int_{\Omega\cap \spt
      \xi}\hspace*{-3mm} \abs{\pi}^2\, d\bx\,,
  \end{aligned}
\end{equation}
where we used Lemma~\ref{lem:TD3}. The first term on the right-hand
side of~\eqref{eq:tangpres1} is treated with the help of
Lemma~\ref{lem:ism}. For that we re-write~\eqref{eq:dtTx1}, using
Lemma~\ref{lem:TD1} and Lemma~\ref{lem:TD3}, and get for all $\bpsi\in 
W^{1,2}_0(\O)$
\begin{equation*}
\begin{aligned}
  &\intO \xi\, \difp \pi \divo \bpsi \, d\bx 
  \\
  &=\intO \xi\, \difp{\T(\bD\bu)}\cdot \bD\bpsi + \bS(\bD \bu) \cdot
  \difn{(\nabla \xi \otimess \bpsi)} -
  \T(\trap{(\bD\bu)})\cdot\big({\partial_3(\xi\bpsi)}\otimess\difp\nabla
  a\big) \, d\bx \notag
  \\
  &\quad + \intO \trap \pi\, {\partial_3(\xi\bpsi)}\cdot\difp\nabla a
  -\pi \, \difn{(\nabla \xi \cdot \bpsi)}\, d\bx -\intO
  \ff\cdot\difn(\xi \bpsi)\, d\bx=: \sum_{k=1}^6 J_k\,.\notag 
\end{aligned}
\end{equation*}
Thus, we have 
\begin{equation}
  \label{eq:tangpres3}
  \begin{aligned}
    \intO &\abs{\xi\, \difp \pi -\mean{\xi\, \difp \pi}_\O }^2 \, d\bx
    \\
    &\le \sup_{\bpsi \in W^{1,2}_0(\O)} \bigg [\intO \big(\xi \difp
    \pi -\mean{\xi \difp \pi}_\O\big)\divo \bpsi\, d\bx - \frac 1{C_0}
    \, \intO \abs{\nabla \bpsi}^2\, d\bx\bigg ]
    \\
    &= \sup_{\bpsi \in W^{1,2}_0(\O)} \bigg [\sum_{k=1}^6 J_k -\intO
    \mean{\xi \difp \pi}_\O\,\divo \bpsi\, d\bx - \frac 1{C_0} \, \intO
    \abs{\nabla \bpsi}^2\, d\bx\bigg ]\,,
  \end{aligned}
\end{equation}
where $C_0$ does only depend on the John constants of $\Omega$.  Using
Young's inequality, the fact that the measure of $\Omega$ is finite,
and~\eqref{eq:tangpres2} we have, for all $\epsilon>0$,
\begin{equation*}
  \begin{aligned}
    \Bigabs{\intO \mean{\xi\, \difp \pi}_\O\,\divo \bpsi\, d\bx} &\le
    \epsilon \, \intO \abs{\nabla \bpsi}^2 \, d\bx+ c(\epsilon ^{-1})
    \, \norm{\xi}_{1,\infty} ^2 \int_{\Omega\cap \spt
      \xi}\hspace*{-3mm} \abs{\pi}^2\, d\bx\,.
  \end{aligned}
\end{equation*}
The terms $J_k$, $k=1,\ldots,6$, are estimated, using the properties
of $\bS^0$, $\bS^1$, and Young's inequality, as follows. We denote by
$C_{P}$ the global \Poincare{} constant for $W^{1,2}_{0}(\Omega)$,
depending only on $\abs {\Omega}$. We have first
\begin{equation}
  \label{eq:j1}
  \begin{aligned}
    \bigabs{J_1} &\le c\, \intO \xi \big ( \abs {\difp \bD\bu}
  +\delta^{\frac {p-2}2}\,\abs{\difp\bF (\bD\bu)}\big )\abs{\nabla
    \bpsi} \, d\bx
  \\
  &\le \epsilon \, \intO \abs{\nabla \bpsi}^2 \, d\bx + c(\epsilon
  ^{-1}) \hspace*{-3mm} \int_{\Omega\cap \spt \xi}\hspace*{-3mm} \xi^2
  \abs {\difp \bD\bu}^2 + \delta^{ {p-2}}\,\xi^2 \, \abs{\difp \bF
    (\bD\bu)}^2 \, d\bx\,, 
\end{aligned}
\end{equation}
where we also used the equivalence~\eqref{eq:3} and $p\le 2$. Next, we estimate 
\begin{align}
  \bigabs{J_2} &\le \epsilon \, \intO \abs{\difn (\nabla \xi \otimess
    \bpsi)}^2 \, d\bx + c(\epsilon ^{-1}) \hspace*{-3mm}
  \int_{\Omega\cap \spt \xi}\hspace*{-3mm} \abs {\bD\bu}^2 + \delta^{
    {p-2}}\,\phi(\abs{\bD\bu}) \, d\bx \notag
  \\
  &\le \epsilon \, \intO \abs{\nabla (\nabla \xi \otimess \bpsi)}^2 \,
  d\bx + c(\epsilon ^{-1}) \hspace*{-3mm} \int_{\Omega\cap \spt
    \xi}\hspace*{-3mm} \abs {\bD\bu}^2 + \delta^{
    {p-2}}\,\phi(\abs{\bD\bu}) \, d\bx \label{eq:j2}
  \\
  &\le \epsilon \,2 (1+C_{P}) \norm{\xi}_{2,\infty} ^2
 \intO \abs{\nabla \bpsi}^2 \,
  d\bx+ c(\epsilon ^{-1}) \hspace*{-3mm} \int_{\Omega\cap \spt
    \xi}\hspace*{-3mm} \abs {\bD\bu}^2 + \delta^{
    {p-2}}\,\phi(\abs{\bD\bu}) \, d\bx\,, \notag  
\end{align}
%
where we also used $p\le 2$,~\eqref{eq:diff}, and \Poincare's
inequality. Next, 
\begin{align}
  \hspace*{-1mm}\bigabs{J_3} 
  &\le c\,\norm{a}_{C^{1,1}}\,\intO \big ( \abs {\trap {(
    \bD\bu)}} +\delta^{\frac {p-2}2}\big (\varphi(\abs{\trap
    {(\bD\bu)}})\big )^{\frac 12}\big ) \big (\abs{(\nabla \xi )\bpsi}
    + \abs{\xi \nabla \bpsi}\big ) \, d\bx   \label{eq:j3}
  \\
  &\le \epsilon \,2 (1+C_{P}) \norm{\xi}_{1,\infty}^2\intO
    \abs{\nabla \bpsi}^2 \, d\bx + c(\epsilon ^{-1})
    \,\norm{a}^2_{C^{1,1}} \hspace*{-4mm}
    \int_{\Omega\cap \spt \xi}\hspace*{-3mm} \abs {\bD\bu}^2 +
    \delta^{{p-2}}\,\phi(\abs{\bD\bu}) \, d\bx\,, \notag
\end{align}
%
where we also used a translation argument for $\bD\bu$ and \Poincare's
inequality. Next, we obtain 
\begin{equation}
  \label{eq:j4}
  \begin{aligned}
    \bigabs{J_4} &\le c\,\norm{a}_{C^{1,1}}\,\intO \abs {\trap
      {\pi}} \big (\abs{(\nabla \xi )\bpsi} + \abs{\xi \nabla \bpsi}\big
    )\, d\bx
    \\
    &\le  \epsilon \,2 (1+C_{P}) \norm{\xi}_{1,\infty}^2\intO
    \abs{\nabla \bpsi}^2 \, d\bx + c(\epsilon ^{-1})
    \,\norm{a}_{C^{1,1}}^2 \hspace*{-3mm} \int_{\Omega\cap \spt
      \xi}\hspace*{-3mm} \abs {\pi}^2 \, d\bx, 
  \end{aligned}
\end{equation}
where we used \Poincare's inequality and a translation
argument for $\pi$. Next, 
\begin{equation}
\label{eq:j5}
\begin{aligned}
  \bigabs{J_5} &\le \epsilon \, \intO \abs{\difn (\nabla \xi \cdot
    \bpsi)}^2 \, d\bx + c(\epsilon ^{-1}) \hspace*{-3mm}
  \int_{\Omega\cap \spt \xi}\hspace*{-3mm} \abs {\pi}^2 \, d\bx 
  \\
  &\le \epsilon \, \intO \abs{\nabla (\nabla \xi \otimess \bpsi)}^2 \,
  d\bx + c(\epsilon ^{-1}) \hspace*{-3mm} \int_{\Omega\cap \spt
    \xi}\hspace*{-3mm} \abs {\pi}^2\, d\bx 
  \\
  &\le \epsilon \,2( 1+C_{P})\norm{\xi}_{2,\infty} ^2
   \intO \abs{\nabla \bpsi}^2 \,
  d\bx+ c(\epsilon ^{-1}) \hspace*{-3mm} \int_{\Omega\cap \spt
    \xi}\hspace*{-3mm} \abs {\pi}^2 \, d\bx\,, 
\end{aligned}
\end{equation}
where we also used~\eqref{eq:diff} and \Poincare's
inequality. Finally,
\begin{equation}
  \label{eq:j6}
  \begin{aligned}
    \bigabs{J_6} &\le \epsilon \, \intO \abs{\difn (\xi \bpsi)}^2 \,
    d\bx + c(\epsilon ^{-1}) \hspace*{-3mm} \int_{\Omega\cap \spt
      \xi}\hspace*{-3mm} \abs {\ff}^2 \, d\bx 
    \\
    &\le \epsilon \, \intO \abs{\nabla (\xi \bpsi)}^2 \, d\bx +
    c(\epsilon ^{-1}) \hspace*{-3mm} \int_{\Omega\cap \spt
      \xi}\hspace*{-3mm} \abs {\ff}^2\, d\bx 
    \\
    &\le \epsilon \, 2(1+C_{P}) \norm{\xi}_{1,\infty}^2 \intO
    \abs{\nabla \bpsi}^2 \, d\bx+ c(\epsilon ^{-1}) \hspace*{-3mm}
    \int_{\Omega\cap \spt \xi}\hspace*{-3mm} \abs {\ff}^2 \, d\bx\,,
  \end{aligned}
\end{equation}
where we  used~\eqref{eq:diff} and \Poincare's inequality.
In the estimates~\eqref{eq:j1}--\eqref{eq:j6} we choose the following
constant $\epsilon>0$
\begin{equation*}
  \epsilon=\frac{1}{24C_{0}(1+C_{P})\norm{\xi}_{2,\infty}^2},
\end{equation*}
and we absorb all terms with $\epsilon$ in the term with $C_0^{-1}$
in~\eqref{eq:tangpres3}. We thus we obtain
from~\eqref{eq:tangpres1},~\eqref{eq:tangpres2},~\eqref{eq:tangpres3},~\eqref{eq:j6},
and~\eqref{eq:tang1} that
\begin{equation*}
    \hspace*{-2mm}\intO \xi^2 \bigabs{ \difp \pi}^2 \, d\bx
\le c( \delta^{p-2},\norm{\xi}_{2,\infty},\norm{a}_{C^{1,1}})
    \hspace*{-3mm} \int_{\Omega\cap \spt \xi}\hspace*{-3mm}
    \abs{\ff}^2 \! + \! \abs{\pi}^2 \! + \!  \abs{\nabla \bu}^2 \! + \!  \phi(\abs{\nabla
      \bu}) \, d\bx\,,     \hspace*{-2mm}
\end{equation*} 
where the right-hand side is finite, since~\eqref{eq:7}
and~\eqref{eq:est-tlak} imply 
\begin{equation}
  \intO \xi^2 \bigabs{ \difp \pi}^2 \, d\bx
  \le M_1( \delta^{p-2},\norm{\xi}_{2,\infty},\norm{a}_{C^{1,1}})
  (1+    \norm{\ff}^2)\,.   \label{eq:tang2}
\end{equation} 
From this
and~\eqref{eq:2a} we thus obtain the
estimate~\eqref{eq:tang-final-pres}.
\end{proof} 
%
\begin{remark}
  \label{rem:interior_reg1}
  The same procedure as in the proof of
  Proposition~\ref{prop:tangential1}, with many simplifications, can
  be done in the interior of $\O$ for divided differences in all
  directions $\be^i$, $i=1,2,3$. By choosing $h\in\Big(0,\frac{1}{2}
  \dist(\spt\xi_{00},\partial\Omega)\Big)$ this leads to
  \begin{equation}
    \label{inreg}
    \intO \xi_{00}^2 \bigabs{ \nabla^2 \bu }^2 +\xi_{00}^2 \bigabs{
      \nabla \F (\bD\bu) }^2 + \phi (\xi_{00} \abs{ \nabla^2
      \bu})\,d\bx \le c( \norm{\xi_{00}}_{2,\infty}) \big (
    1+\norm{\bff}_2^2
    \big )\,,
  \end{equation}
  \begin{align}
    \label{inreg-pres}
  \intO \xi^2_{00} \bigabs{ \nabla \pi }^2 \, d\bx \le c(\delta^{p-2},
  \norm{\xi_{00}}_{2,\infty}) \big ( 1+ \norm{\bff}_2^2
  \big )\,,
  \end{align}
  where $\xi_{00}$ is any cut-off function with compact support
  contained in $\Omega$.
\end{remark}

The following fundamental result derives immediately from the interior
regularity.
  \begin{corollary}\label{cor:a.e.}
    Under the same hypotheses of Theorem~\ref{thm:MT1} we obtain that
    $\bF(\bD)\in W^{1,2}_{\loc}(\Omega)$ and we know from~\eqref{inreg}
    and~\eqref{inreg-pres} that $\bu\in W^{2,2}_\loc(\O)$, \linebreak
    $\pi \in W^{1,2}_\loc (\Omega)$. This implies --in particular--
    that the equations of system~\eqref{eq} hold almost everywhere in
    $\Omega$
  \end{corollary}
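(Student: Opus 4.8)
The plan is to read off the interior regularity of $\bu$, $\bF(\bD\bu)$ and $\pi$ directly from the uniform difference-quotient estimates~\eqref{inreg}--\eqref{inreg-pres}, and then, with enough regularity in hand, to integrate by parts in the weak formulation~\eqref{eq:eq-tlak} so that $-\divo\T(\bD\bu)+\nabla\pi$ becomes a genuine $L^2_{\loc}$-function which must coincide with $\ff$.

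First I would fix an arbitrary open set $\omega\Subset\Omega$ together with a cut-off $\xi_{00}\in C_0^\infty(\Omega)$ satisfying $0\le\xi_{00}\le1$ and $\xi_{00}\equiv1$ on $\omega$, and invoke Remark~\ref{rem:interior_reg1}: running the argument of Proposition~\ref{prop:tangential1} with divided differences in all three coordinate directions $\be^i$ (with $h$ below a fixed fraction of $\dist(\spt\xi_{00},\partial\Omega)$) produces the bounds~\eqref{inreg} and~\eqref{inreg-pres}, whose right-hand sides do not depend on $h$. Hence the $L^2(\omega)$-norms of the difference quotients of $\nabla\bu$, of $\bF(\bD\bu)$ and of $\pi$ in each coordinate direction are bounded uniformly in $h$, and since $\omega$ was arbitrary the classical characterisation of Sobolev functions via difference quotients (e.g.~\cite{evans-pde}) yields $\bu\in W^{2,2}_{\loc}(\Omega)$, $\bF(\bD\bu)\in W^{1,2}_{\loc}(\Omega)$ and $\pi\in W^{1,2}_{\loc}(\Omega)$.

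Finally I would transfer the weak identity to a pointwise one. Since $\delta>0$ the map $\bfS=\bS^0+\bS^1$ is of class $C^1$ on all of $\setR^{3\times3}$, and~\eqref{eq:ass_S} together with~\eqref{eq:phi''} bounds $\abs{\partial_{kl}S_{ij}}$ by a fixed constant for $\bS^0$ plus $\kappa_1\max\{1,p-1\}\,\delta^{p-2}$ for $\bS^1$; hence the chain rule and $\bu\in W^{2,2}_{\loc}$ put $\bx\mapsto\T(\bD\bu(\bx))$ in $W^{1,2}_{\loc}(\Omega)$, with $\abs{\nabla\T(\bD\bu)}\le C(\delta)\abs{\nabla^2\bu}$, so that $\divo\T(\bD\bu)\in L^2_{\loc}(\Omega)$. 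Taking $\bv\in C_0^\infty(\omega)$ in~\eqref{eq:eq-tlak}, using the symmetry of $\T(\bD\bu)$ to write $\intO\T(\bD\bu)\cdot\bD\bv\,d\bx=-\intO\divo\T(\bD\bu)\cdot\bv\,d\bx$ and $\intO\pi\,\divo\bv\,d\bx=-\intO\nabla\pi\cdot\bv\,d\bx$, one is left with $\intO\bigl(-\divo\T(\bD\bu)+\nabla\pi-\ff\bigr)\cdot\bv\,d\bx=0$ for all such $\bv$; the fundamental lemma of the calculus of variations then gives $-\divo\T(\bD\bu)+\nabla\pi=\ff$ a.e.\ in $\omega$, hence a.e.\ in $\Omega$, while $\divo\bu=0$ in $\Omega$ and $\bu=\bfzero$ on $\partial\Omega$ are immediate from $\bu\in W^{1,2}_{0,\divo}(\Omega)$; thus the equations of~\eqref{eq} hold a.e. Granted~\eqref{inreg}--\eqref{inreg-pres} this is essentially routine, and the only step needing care — the place where the hypotheses of Theorem~\ref{thm:MT1} are used beyond the interior estimates themselves — is putting $\T(\bD\bu)$ into $W^{1,2}_{\loc}$, which relies on $\delta>0$ to make $\bfS'$ globally bounded (in the degenerate case $\delta=0$ this would instead require an extra approximation argument).
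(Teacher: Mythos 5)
Your proposal is correct and follows essentially the same route as the paper: the corollary is obtained directly from the interior difference-quotient estimates \eqref{inreg}--\eqref{inreg-pres} (which give $\bu\in W^{2,2}_{\loc}$, $\bF(\bD\bu)\in W^{1,2}_{\loc}$, $\pi\in W^{1,2}_{\loc}$ by the standard characterisation of Sobolev functions), after which integration by parts in the weak formulation and the fundamental lemma yield the pointwise equations. Your extra remark that $\delta>0$ makes $\bS'$ globally bounded, so that $\T(\bD\bu)\in W^{1,2}_{\loc}$ and $\divo\T(\bD\bu)\in L^2_{\loc}$, is exactly the detail the paper leaves implicit, and it is the right one.
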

  The above corollary implies that all pointwise calculations we are
  going to perform in the next section to estimate the remaining
  derivatives are justified.
\subsection{Regularity in the normal direction}
\label{sec:nor}
In this section we obtain global information about the regularity of
$\nabla^2\bu$ and $\nabla\pi$, by combining the information about
tangential regularity~\eqref{eq:tang-final} and the fact that the
couple $(\bu,\pi)$ satisfies almost everywhere the system~\eqref{eq}.
We use methods applied also in~\cite{hugo-nonflat,hugo-petr-rose,mnr}
for $p>2$ and and which have been previously used in the case $p<2$
also in~\cite{Ber-Gri2-2016} (for the problem without pressure) and
in~\cite{BdV2009}.
The main result of this section is the following proposition.
\begin{Proposition}
  \label{prop:normal1}
  Let the assumptions of Theorem~\ref{thm:MT1} be satisfied and let
  the local description $a_P$ of the boundary and the localization
  function $\xi_P$ satisfy $(b1)$--\,$(b3)$ and $(\ell 1)$
  (cf.~Section~\ref{sec:bdr}). Then, there exist a constant $C_2>0$
  and functions $M_2, M_3 $ such that for every $P\in \partial \Omega$
  \begin{align}
    \intO \xi^2_P \bigabs{ \nabla^2 \bu }^2 \,d\bx 
    &\le M_2 \,\big ( 1+\norm{\bff}_2^2
      \big )\,,\label{eq:norm-final-1}
    \\
    \intO \xi^2_P \bigabs{ \nabla\F(\bD\bu) }^2\,d\bx 
    &\le   M_{2}\,\delta^{p-2} \, \big (
      1+\norm{\bff}_2^2
      \big )\,,\label{eq:norm-final-2}
  \end{align}
  where
  $M_2:=1 +M_1(\delta^{p-2},
  \norm{\xi_P}_{2,\infty}, \norm{a_P}_{C^{1,1}}) + (1+\delta^{2(p-2)})M_0(
  \norm{\xi_P}_{2,\infty}, \norm{a_P}_{C^{2,1}})$, and
  \begin{equation}
     \label{eq:norm-final-pres}
    \intO \xi^2_P \bigabs{ \nabla\pi }^2 \, d\bx \le
    M_3\, \big ( 1+\norm{\bff}_2^2
    \big  )\,,
  \end{equation}
  where $M_3:=1 + 2(1+\kappa_1^2\,\delta^{2(p-2)})
  M_2$, provided that $r_P <C_2$.
\end{Proposition}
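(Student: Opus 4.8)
## Proof Strategy for Proposition~\ref{prop:normal1}

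The plan is to exploit the fact, established in Corollary~\ref{cor:a.e.}, that the system~\eqref{eq} holds almost everywhere, and to use it to express the missing normal derivatives in terms of the tangential derivatives already controlled by Proposition~\ref{prop:tangential1} and Remark~\ref{rem:interior_reg1}. Fix $P \in \partial\Omega$ and work in $\Omega_P$ with $\xi := \xi_P$, $a := a_P$. The equation $-\divo\bfS(\bfD\bu) + \nabla\pi = \bff$ holds pointwise. First I would rewrite the differential operators in terms of the tangential field $\partial_\tau = \partial_\alpha + \partial_\alpha a\,\partial_3$ and the single genuinely normal derivative $\partial_3$. Writing $\divo\bfS(\bfD\bu)$ componentwise and separating the terms that contain $\partial_3$ acting twice (through $\partial_3\partial_3 u^3$ and the $\partial_3$-derivatives of the off-diagonal stress entries), one obtains an algebraic system: the ``bad'' second normal derivatives of $\bu$ multiplied by a matrix built from $\partial_{kl}S_{ij}(\bfD\bu)$, equal to a right-hand side consisting of $\bff$, $\nabla\pi$, and terms that involve at most one $\partial_3$ together with at least one $\partial_\tau$ (hence already estimated). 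The new idea announced in the introduction is that this algebraic system should be set up for an intrinsic stress-related quantity rather than for $\nabla^2\bu$ directly, so that the matrix is uniformly invertible (its smallest eigenvalue bounded below by $\kappa_0\phi''(|\bfD\bu|) \sim \kappa_0(\delta+|\bfD\bu|)^{p-2}$, up to the flatness defect) without the restriction $p > 3/2$.

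The key steps, in order, are: (i) localize by $\xi^2$ and integrate; (ii) use the pointwise equation to solve for $\xi\,\partial_3$ of the relevant stress component, controlling it by $\xi\,|\bff|$, $\xi\,|\nabla\pi|$, and $\xi$ times tangentially-differentiated quantities, plus commutator terms carrying factors $\nabla a$, $\nabla^2 a$, which are small (this is where the hypothesis $r_P < C_2$ enters — the flatness defect must be absorbed into the coercivity of the algebraic system); (iii) convert control of $\xi\,\partial_3\bfS(\bfD\bu)$ into control of $\xi\,|\nabla\bfF(\bfD\bu)|^2$ and $\xi\,|\nabla^2\bu|^2$ using the pointwise equivalences~\eqref{eq:3}, \eqref{eq:3a} and the fact that tangential derivatives of $\bfF(\bfD\bu)$ are already in $L^2$ by~\eqref{eq:tang-final}; here the $\delta^{p-2}$ factor in~\eqref{eq:norm-final-2} appears because passing from $|\nabla\bfF(\bfD\bu)|^2 \sim (\delta+|\bfD\bu|)^{p-2}|\nabla^2\bu|^2$ to $|\nabla^2\bu|^2$ costs a power $(\delta+|\bfD\bu|)^{2-p} \le \delta^{p-2}\cdot(\dots)$ in the wrong direction, or more precisely the reverse estimate loses $\delta^{p-2}$; (iv) absorb the $\nabla\pi$ term for the moment, close the estimate for $\nabla^2\bu$ and $\nabla\bfF(\bfD\bu)$ modulo $\|\xi\nabla\pi\|_2$, yielding the bound with the constant $M_2$; (v) finally estimate $\xi\,\nabla\pi$: the tangential part $\xi\,\partial_\tau\pi$ is controlled by~\eqref{eq:tang-final-pres}, and the normal part $\xi\,\partial_3\pi$ is read off directly from the third component of the equation, $\partial_3\pi = f^3 + (\divo\bfS(\bfD\bu))^3$, whose right-hand side is now bounded in $L^2(\xi^2\,d\bx)$ by step (iv) up to a factor $\kappa_1\phi''(|\bfD\bu|) \le \kappa_1\delta^{p-2}$ in front of $|\nabla^2\bu|$ — this produces the $\delta^{2(p-2)}$ in $M_3$; one then substitutes back, and for $r_P$ small enough the coefficient of $\|\xi\nabla\pi\|_2^2$ on the right is $< 1$ and can be absorbed, closing all three estimates simultaneously.

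The main obstacle I expect is step (ii)–(iii): proving that the algebraic system for the intrinsic quantity is uniformly solvable down to $p \to 1^+$. For $p > 3/2$ the classical approach solves for $\partial_{33}u^3$ and the second normal derivatives of $u^1, u^2$ from a $3\times 3$ (or effectively scalar, after using $\divo\bu = 0$) linear system; the determinant degenerates like $(\delta+|\bfD\bu|)^{3(p-2)}$ and controlling the inverse against the off-diagonal tangential terms forces $p > 3/2$. The improvement here must come from choosing the unknown to be (a suitable component of) $\bfS(\bfD\bu)$ or $\bfF(\bfD\bu)$ itself, for which the relevant matrix is $\partial_{kl}S_{ij}$ evaluated once, with eigenvalues comparable to $\phi''(|\bfD\bu|)$ rather than its cube, so that a single application of the lower bound in~\eqref{eq:ass_S} suffices and the flatness smallness $r_P < C_2$ handles the cross terms. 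Making this precise — identifying exactly which linear combination of derivatives plays the role of the intrinsic unknown, and verifying that the divergence constraint provides the one extra scalar relation needed — is the technical heart of the argument; everything else is Young's inequality, the pointwise equivalences of Section~\ref{sec:stress_tensor}, and absorption.
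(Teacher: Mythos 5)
Your outline reproduces the architecture of the paper's argument --- pointwise use of the a.e.\ equations, an algebraic system made coercive by \eqref{eq:ass_S}, absorption of the flatness defect via $r_P<C_2$, recovery of $\nabla\bF(\bD\bu)$ and $\nabla\pi$ at the end with the stated powers of $\delta$ --- but the step you yourself flag as ``the technical heart'', namely \emph{which} quantity the algebraic system is written for, is precisely the step you have not supplied, and your guess (a component of $\bS(\bD\bu)$ or $\bF(\bD\bu)$ itself) is not what makes the argument close. The paper's unknown is $\mathfrak b_\gamma:=\partial_3\bD_{\gamma 3}$, $\gamma=1,2$: in the $\alpha$-th equation only the terms $-2\,\partial_{\gamma 3}\bS_{\alpha 3}(\bD\bu)\,\partial_3\bD_{\gamma 3}$ are kept on the left, while $\partial_3\bD_{33}$ (eliminated through $\divo\bu=0$) and $\partial_3\bD_{\gamma\sigma}$ are rewritten as tangential derivatives plus $O(\|\nabla a\|_\infty)\,|\nabla^2\bu|$ corrections and sent to the right. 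The resulting $2\times 2$ matrix $A_{\alpha\gamma}=\partial_{\gamma 3}\bS_{\alpha 3}$ is then contracted with $\mathfrak b_\alpha\mathfrak b_\gamma$, i.e.\ \eqref{eq:ass_S} is tested with the symmetric matrix $\bQ$ having $Q_{\gamma 3}=Q_{3\gamma}=\mathfrak b_\gamma$ as its only nonzero entries, giving $2A_{\alpha\gamma}\mathfrak b_\gamma\mathfrak b_\alpha\ge 2\big(\kappa_0(2)+\kappa_0(p)\,\phi''(|\bD\bu|)\big)|\boldsymbol{\mathfrak b}|^2$ in one stroke --- no determinant, hence no restriction $p>\tfrac32$. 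Solving instead for $\partial_3\bS_{\alpha 3}$ (which the equation hands you for free) does not work: $\partial_3 S_{\alpha 3}=\partial_{kl}S_{\alpha 3}\,\partial_3\bD_{kl}$ mixes all components of $\partial_3\bD$ and cannot be inverted componentwise.

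Two further discrepancies. First, your absorption in steps (iv)--(v) is performed on integrated norms (``the coefficient of $\|\xi\nabla\pi\|_2^2\dots$ is $<1$''), whereas the paper substitutes the pointwise bound $|\partial_3\pi|\le|\bff|+c\,(1+\phi''(|\bD\bu|))|\nabla^2\bu|$ into the bound for the right-hand side $\boldsymbol{\mathfrak f}$ \emph{before} squaring, converts $|\boldsymbol{\mathfrak b}|$ back into $\partial^2_{33}u^\alpha$ and then into the full $|\nabla^2\bu|$ at the cost of further tangential terms plus $\|\nabla a\|_\infty|\nabla^2\bu|$, and absorbs the entire coefficient of $\|\nabla a\|_\infty|\nabla^2\bu|$ by choosing $r_P\le 1/(2C_1)$; it is the $\nabla a$ factor, not any smallness of the pressure contribution, that permits the absorption, so your integrated version would need extra care there. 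Second, for Theorem~\ref{thm:MT1} the decisive structural fact is that the coercivity constant $\kappa_0(2)+\kappa_0(p)\phi''(|\bD\bu|)$ is bounded below by $\kappa_0(2)>0$, so that $\phi''$ may be dropped on the left of \eqref{eq:estimate_normal_final} before squaring; your statement that the smallest eigenvalue is bounded below by $\kappa_0\phi''(|\bD\bu|)$ misses the quadratic part and would only yield the weighted estimate of Section~\ref{sec:norp}, not \eqref{eq:norm-final-1}.
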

\begin{proof}
  Let us consider the first two equations in~\eqref{eq}. In order to
  better separate variables concerning tangential and normal
  directions we use the following convention: Greek low-case letters
  concern only tangential derivatives while Latin low-case concern all
  of them, that is     $\alpha,\beta,\gamma,\sigma=1,2$, and 
    $i,j,k,l=1,2,3$, %
    with the usual convention of summation over repeated
    indices. Moreover, we denote a two-dimensional vector with
    components $(b_1,b_2)^\top$ by $\boldsymbol { \mathfrak b}$. Let
    us put in evidence the equations for the variables
    $\bS_{\alpha 3}$, by re-writing the $\alpha$-th equation
    from~\eqref{eq} as follows:
  \begin{equation*}
    -\partial_3 \bS_{\alpha 3}=f_\alpha+\partial_\alpha
    \pi+\partial_\beta \bS_{\alpha \beta}\qquad \textrm {a.e.\ in }\Omega.
  \end{equation*}
  We previously estimated $\partial_\tau \bD\bu$ and we need now to
  estimate $\partial_3\bD_{\gamma3}$, to recover all second order
  derivatives of $\bu$.  We re-write explicitly
  (component-wise)~$\eqref{eq}_{\alpha}$ as follows:
  \begin{equation}
    \label{eq:separation_variables}
    \begin{aligned}
      -&\partial_{\gamma 3} \bS_{\alpha 3}\partial_3 \bD_{\gamma 3}
      -\partial_{3 \gamma } \bS_{\alpha 3}\partial_3 \bD_{ 3 \gamma}
      \\
      & =f_\alpha+ \partial_\alpha \pi+\partial_{3 3}\bS_{\alpha
        3}\partial_3 \bD_{3 3}+\partial_{\gamma \sigma}\bS_{\alpha
        3}\partial_3 \bD_{\gamma \sigma}+ \partial_{k l}\bS_{\alpha
        \beta}\partial_\beta\bD_{k l}=:\mathfrak {f}_\alpha\,,
    \end{aligned}
  \end{equation}
  by keeping only the above two terms in the left-hand side.  The
  system~\eqref{eq:separation_variables} can be written as
  the algebraic linear system
  \begin{equation}
    \label{eq:linear_system}
    -2  A_{\alpha\gamma}b_\gamma=\mathfrak{f}_\alpha\qquad\textrm
    {a.e.\ in }\Omega\,, 
  \end{equation}
  where $ A_{\alpha \gamma}:=\partial_{\gamma 3}\bS_{\alpha 3}$, and
  $\mathfrak {b}_\gamma:=\partial_3 \bD_{\gamma 3}$. This is obtained by recalling
  that $\bD$ is symmetric, hence $\bD_{\gamma 3}=\bD_{3 \gamma}$, and
  by observing that
  $\partial_{\gamma 3}\bS(\bD)=\partial_{3 \gamma}\bS(\bD)$, since
  $\bS$ depends only on the symmetric tensor $\bD$.  

  We will estimate the quantity $\mathfrak b_\gamma =\partial_{3}\bD_{\gamma 3}$
  in terms of tangential derivatives, estimated in the previous
  section, and then extract from it information on the normal
  derivatives. A similar technique was employed in the above cited
  references, but there immediately a system for the normal
  derivatives $\partial ^2_{33}\bu$ has been derived, which led to the
  restriction $p>\frac32$, even in the flat case.  

  Multiplying~\eqref{eq:linear_system} pointwise a.e. by $-\mathfrak
  b_\alpha$ and summing over $\alpha=1,2$ we get, also using the
  structure of $\bS$,
\begin{equation*}
  2\big( \kappa_0(2)+\kappa_0(p)\,\phi''(|\bD\bu|)\big)|\boldsymbol { \mathfrak b}|^2
  \leq 2A_{\alpha \gamma}\mathfrak b_\gamma \mathfrak 
  b_\alpha\leq|\boldsymbol { \mathfrak f}|\,|\boldsymbol { \mathfrak
    b}|\quad\textrm {a.e.\ in }\Omega.   
\end{equation*}
To handle $\boldsymbol { \mathfrak f}$ we prove identities and estimates valid almost
everywhere in $\Omega_P$.  We start with the following identity for
the pressure:
\begin{equation*}
  \partial_\alpha \pi=\partial_{\tau_\alpha}\pi-\partial_\alpha
  a\,\partial_3 \pi\qquad\qquad \textrm {a.e.\ in }  \Omega_P\,.
\end{equation*}
Concerning the third term from the right-hand side
of~\eqref{eq:separation_variables} we get a.e.~in $\Omega_P$, by using
the solenoidal constraint and the formula for tangential derivatives, 
  \begin{equation*}
    \begin{aligned}
      \partial_3\bD_{3 3}&=-\partial_3 \bD_{\alpha\alpha}=-\partial_3
      (\nabla \bu)_{\alpha\alpha}=-\partial_\alpha\partial_3 u^\alpha
      =-\partial_{\tau_\alpha}\partial_3 u^\alpha+\partial_\alpha
      a\,\partial_{33}^2 u^\alpha.
    \end{aligned}
  \end{equation*}
  Concerning the fourth term from the right-hand side
  of~\eqref{eq:separation_variables} we observe that a.e.~in $\Omega_P$
  \begin{equation*}
    \partial_3\bD_{\gamma\sigma}=\frac{1}{2}\left(\partial^2_{\gamma3}u^\sigma
      +\partial_{\sigma3}^2u^\gamma\right)=
    \frac{1}{2}\left(\partial_{\tau_\gamma}\partial_3
      u^\sigma-\partial_\gamma a\, \partial^2_{33}u^\sigma+\partial_{\tau_\sigma}\partial_3
      u^\gamma-\partial_\sigma a\, \partial^2_{33}u^\gamma\right).
  \end{equation*}
  The fifth term from the right-hand side
  of~\eqref{eq:separation_variables} is handled recalling that
  \begin{equation*}
    \partial_\beta\bD_{k l}=\partial_{\tau_\beta}\bD_{k
      l}-\partial_\beta a\, \partial_3\bD_{kl}\qquad \textrm{a.e.\
    in } \Omega_P.
  \end{equation*}
  Collecting all these identities we obtain that a.e.~in $\Omega_P$
  the right-hand side $\boldsymbol { \mathfrak f}$
  of~\eqref{eq:separation_variables} can be bounded as follows :
  \begin{equation*}
      |\boldsymbol { \mathfrak f}| 
      \leq c
      \left(|\bff|+|\partial_\tau \pi|+\|\nabla
        a\|_{\infty}|\partial_3
        \pi|+(1+\phi''(|\bD\bu|))\left(|\partial_\tau\nabla \bu|+\|\nabla
          a\|_{\infty}|\nabla^2 \bu|\right)\right),
  \end{equation*}
  where 
  the constant $c$ depends only on 
  the characteristics of $\bS$.
  To estimate the partial derivative $\partial_3\pi$ we use again the
  equations to write pointwise in $\Omega$ that
  $\partial_{3}\pi=-f_{3}-\partial_{j}\bS_{3 j}$ and hence to obtain
  \begin{equation}
    |\partial_{3}\pi|\leq
    |\bff|+c\big(1+\phi''(|\bD\bu|)\big)|\nabla ^{2}\bu|\qquad \textrm {a.e.\
    in }\Omega_P\,.\label{eq:pi3}
  \end{equation}
  Collecting the estimates and dividing both sides by
  $|\boldsymbol{\mathfrak b}|\not=0$ (when it is zero there is nothing
  to prove) we obtain
  \begin{equation*}
    \begin{aligned}
      &\left( 1+\phi''(|\bD\bu|)\right)|\boldsymbol { \mathfrak b}|
      \\
      &\leq c \left(|\bff|+|\bff|\|\nabla a\|_{\infty}+|\partial_\tau
        \pi|+(1+\phi''(|\bD\bu|))\left(|\partial_\tau\nabla
          \bu|+\|\nabla a\|_{\infty}|\nabla^2 \bu|\right)\right),
    \end{aligned}
  \end{equation*}
  We now identify the ``normal''\footnote{It is not the normal
    derivative at all points on $\partial\Omega\cap \partial\Omega_P$,
    but only exactly at $\bx=P$} derivative $\partial_{33}u^{\alpha}$
  from the left-hand side by observing that
  \begin{equation*}
    \begin{aligned}
      \mathfrak b_{\alpha}&=\frac{1}{2}\left(\partial_{3 \alpha}^2
        u^{3}+\partial_{33}^2u^{\alpha}\right)=
      \frac{1}{2}\left(-\partial_{\alpha} \bD_{\beta
          \beta}+\partial_{33}^2u^{\alpha}\right)=
      \frac{1}{2}\left(-\partial^2_{\alpha \beta}u^\beta+\partial^2_{33}u^{\alpha}\right)
      \\
      &=\frac{1}{2}\left(-\partial_{\tau_{\alpha}}\partial_{\beta}
        u^{\beta}+\partial_{\alpha}
        a\,\partial_{3\beta}^2u^{\beta}+\partial^2_{33}u^{\alpha}\right).
    \end{aligned}
  \end{equation*}
  Consequently, we can write (in terms of
  $\tilde{\mathfrak b}_{\alpha}:=\partial^2_{33}u^{\alpha}$)
  \begin{equation*}
    |\boldsymbol { \mathfrak b}|\geq2|\tilde{\boldsymbol{\mathfrak
        b}}|-|\partial_{\tau}\nabla\bu|-\|\nabla 
    a\|_{\infty}|\nabla ^{2}\bu|\quad\textrm{a.e. in }\Omega_P, 
  \end{equation*}
  from which we can show the following fundamental estimate
  \begin{equation*}
    \begin{aligned}
      &\left( 1+\phi''(|\bD\bu|)\right)|\tilde {\boldsymbol { \mathfrak b}}|
      \\
      &\leq c \left(|\bff|+|\bff|\|\nabla a\|_{\infty}+|\partial_\tau
        \pi|+(1+\phi''(|\bD\bu|))\left(|\partial_\tau\nabla
          \bu|+\|\nabla a\|_{\infty}|\nabla^2 \bu|\right)\right),
    \end{aligned}
  \end{equation*}
  which is valid almost everywhere in $\Omega_P$. 
  %
  Next, we observe that adding to both sides, for $\alpha=1,2$ and
  $i,k=1,2,3$ the term
  \begin{equation*}
    \left(1+\phi''(|\bD\bu|)\right)\big(|\partial_\alpha\partial_i
    u^k|+|\partial_{33}^2 u^3|\big)
  \end{equation*}
  the left-hand side equals $\left(1+\phi''(|\bD\bu|)\right)|\nabla ^{2}\bu|$.
  To handle the terms we added in the right-hand side, we observe,
  that $\partial_{33} u^{3}=-\partial_{3\alpha}u^{\alpha}$ implies (by the
  solenoidal constraint)
  \begin{equation*}
    \begin{aligned}
      \partial_\alpha\partial_i
      u^k&=\partial_{\tau_\alpha}\partial_i u^k-\partial_\alpha
      a\,\partial^2_{3i} u^k,
      \\
      \partial_{33}^2u^{3}&=-\partial_{\tau_\alpha}\partial_3
      u^\alpha+\partial_\alpha a\,\partial^2_{33} u^\alpha\,.
    \end{aligned}
  \end{equation*}
  Hence, there exists a constant $C_1$, depending only on the
  characteristics of $\bS$, such that a.e.~in $\Omega_P$ it holds
  \begin{align}
    &\left( 1+\phi''(|\bD\bu|)\right)|\nabla ^2\bu|     \label{eq:estimate_normal_1}
    \\
    &\leq C_1 \left(|\bff|+|\bff|\|\nabla a\|_{\infty}+|\partial_\tau
      \pi|+(1+\phi''(|\bD\bu|))\left(|\partial_\tau\nabla
      \bu|+\|\nabla a\|_{\infty}|\nabla^2
      \bu|\right)\right)\,. \notag 
  \end{align}
  Next, we choose the open sets $\Omega_{P}$ small enough (that is we
  choose the radii $R_{P}$ small enough) in such a way that
  \begin{equation}
    \label{eq:smallness-a}
    \|\nabla a_{P}(x)\|_{L^{\infty}(\Omega_{P})}\le    r_{P}\leq \frac{1}{2C_1}=:C_2\,.
  \end{equation}
  Thus, we can absorb the last term from~\eqref{eq:estimate_normal_1} in
  the left-hand side, which yields 
  \begin{equation}
    \label{eq:estimate_normal_final}
    \begin{aligned}
      & \left( 1+\phi''(|\bD\bu|)\right)|\nabla ^{2}\bu|
      \\
       &
       \qquad 
       \leq c\left(|\bff|+|\partial_{\tau}\pi|+\big(1+
         \phi''(|\bD\bu|)\big)|\partial_{\tau}\nabla\bu|\right)
       \quad \textrm{a.e.\ in }\Omega_P.
    \end{aligned}
  \end{equation}
  It is at this point that we use the special features of the stress
  tensor which is the sum of the quadratic part with one with
  $(p,\delta)$-structure for $p<2$.  Hence, neglecting the second term
  on the left-hand side, which is non-negative,  raising the
  remaining inequality to the power $2$, and multiplying
  by the cut-off function $\xi_P^2$, we obtain
  \begin{equation}
    \label{eq:6}
    |\nabla^2\bu|^2\,\xi_P^2\leq
    c\left(|\bff|^2+|\partial_{\tau}\pi|^2+(1+\delta^{2(p-2)})
      |\partial_\tau\nabla\bu|^2
    \right)\,\xi_P^2    \quad  \textrm{a.e.\ in }\Omega_P.
  \end{equation}
  Corollary~\ref{cor:a.e.} implies that the left-hand side is
  measurable, while Proposition~\ref{prop:tangential1} implies that
  the right-hand side belongs to $L^1(\Omega_P)$. Thus,
  Proposition~\ref{prop:tangential1} yields
  estimate~\eqref{eq:norm-final-1}.
  Next, we observe that (see~\cite[Lemma~3.8]{bdr-7-5})
  \begin{equation*}
    |\nabla\bF(\bD)|^2\leq (\delta+|\bD|)^{p-2}|\nabla \bD \bu|^2
    \leq {\delta^{p-2}}|\nabla^2\bu|^2\,, 
  \end{equation*}
  which together with~\eqref{eq:norm-final-1} yields~\eqref{eq:norm-final-2}. 
  The estimate on $\nabla\pi$ is obtained from equation~\eqref{eq},
  since 
  \begin{equation*}
    |\nabla\pi|\leq
    |\bff|+\big(1+\kappa_1\phi''(|\bD\bu|)\big)|\nabla^{2}\bu| \,,
  \end{equation*}
  where we also used~\eqref{eq:ass_S}. This
  and~\eqref{eq:norm-final-1} yields~\eqref{eq:norm-final-pres}.
\end{proof}

\begin{proof}[Proof of Theorem~\ref{thm:MT1}]
  For every $P\in\bou$ we choose a local description $a_P$ of the
  boundary satisfying $(b1)$--\,$(b3)$ (cf.~Section~\ref{sec:bdr})
  with $r_P<C_2$. Note that
  \begin{equation}
    \label{eq:covering}
    \bou \subset \bigcup_{P\in\bou}\widetilde{\O}_P\,,
  \end{equation}
  where $\widetilde{\Omega}_P=\frac{1}{2}\Omega_P$,
  recall~\eqref{eq:scaled_omega_P}.  Since $\bou $ is compact there
  exists a finite sub-covering $\{\widetilde \Omega_P,\ P\in
  \II\}$. Next, we choose a set $\Omega_0\subset \subset \Omega$ such
  that $\dist(\Omega_0,\partial\Omega)<\frac{1}{16}
  \min\{R_P,{P\in\II}\}$.  Associated to the covering of $\Omega$ made
  by $\{\Omega_0,\,\O_P\}_{P\in\II}$ we consider a set of smooth
  non-negative functions $\{\xi_0,\xi_P\}_{P\in \II}$, where $\xi_P$
  satisfy $(\ell1)$ and $\xi_0 $ satisfies $\spt
  \xi_0\subset\Omega_0$,
  $\dist(\spt\xi_0,\partial\Omega)<\frac{1}{8}\min\{R_P,{P\in\II}\}$,
  and $\xi_0(\bx)=1$ for all $\bx$ with $\dist (\bx,
  \bou)>\frac{1}{4}\min\{R_P,{P\in\II}\}$. Observe that, by
  construction the set $\{\bx:\ \xi_0=1\}\cup_{P\in \II} \{\bx:\
  \xi_P=1\}$ covers all ${\Omega}$.  Since the covering
  $\{\Omega_0,\,\O_P\}_{P\in\II}$ is finite, the evaluation of the
  functions $M_i$, $i=0,1,2,3,$ yields a finite fixed constant. Thus,
  Proposition~\ref{prop:tangential1} and
  Proposition~\ref{prop:normal1} applied to the finite covering
  $\{\O_0,{\O}_P\}_{P\in\II}$ of $\Omega$ yields immediately the
  assertions of Theorem~\ref{thm:MT1}.
\end{proof}

\section{Proof of Theorem~\ref{thm:MT2}}
\label{sec:p-stokes}
In this section we treat the problem with the principal part having
$(p,\delta)$-struct\-ure. Many calculations are similar to those of the previous
section, hence we recall them and mainly explain the differences
arising in the treatment of tangential and normal derivatives. We
assume that $\T$ satisfies the assumption of Theorem~\ref{thm:MT2},
i.e.,~$\bS$ has $(p,\delta)$-structure. Moreover, $\ff$ belongs to
$L^{p'}(\Omega)$. Due to Remark~\ref{rem:equi-norm} this is equivalent
to $\ff \in L^{\phi^*}(\Omega)$.

As in Section~\ref{sec:Stokes} we easily obtain the existence of a
unique  $\bu \in W^{1,p}_{0,\divo}(\O)$ satisfying the weak
formulation, i.e.~for all $\bv\in W^{1,p}_{0,\divo}(\O)$
  \begin{equation*}
    \intO\T(\bD\bu)\cdot\bD\bv\, d\bx=\intO \ff \cdot \bv\,
    d\bx\,, 
  \end{equation*}
and the a-priori estimate 
\begin{align}
  \label{eq:7p}
  \intO \varphi(\abs{\nabla \bu }) + \abs{\bF(\bD\bu)}^2 \, d\bx \le c\, \intO 
  \phi^*(\abs{\ff}) \, d\bx\,.
\end{align}
In this section all constants can depend on the characteristics of
$\bS$, on $\diameter (\Omega)$, $\abs{\Omega}$, on the space
dimension, on the John constants of $\Omega$, and on $\delta_0$
(cf.~Remark~\ref{rem:delta0}). All these dependencies will not
mentioned explicitly, while the dependence on other quantities is made
explicit.

To the weak solution $\bu$ there exists a unique associated pressure
$\pi\in L^{p'}_0(\O)$ satisfying for all $\bv\in W^{1,p}_0(\O)$
\begin{equation}
  \label{eq:eq-tlakp}
  \intO\T(\bD\bu)\cdot\bD\bv-\pi\diver\bv \, d\bx =\intO \ff \cdot
  \bv\, d\bx\,. 
\end{equation}
Using the properties of $\bS$, Lemma~\ref{lem:ism} and Young's
inequality we thus obtain
\begin{align}\label{eq:est-tlakp}
  \intO\phi^*(\abs{\pi })\, d\bx \le c\,\intO \phi^*(  \abs{\ff}) \, d\bx \,.
\end{align}
\subsection{Regularity in tangential  directions and in the interior}
\label{sec:tanp}
We start again with the regularity in tangential directions.
In the same way as in Section~\ref{sec:Stokes} we derive the following result.
\begin{Proposition}
\label{prop:tangential2}
Let the assumptions of Theorem~\ref{thm:MT2} be satisfied and let the
local description $a_P$ of the boundary and the localization function
$\xi_P$ satisfy $(b1)$--\,$(b3)$ and $(\ell 1)$
(cf.~Section~\ref{sec:bdr}). Then, there exists functions
$M_4 \in C(\setR^{\ge 0}\times \setR^{\ge 0})$,
$M_5 \in C(\setR^{>0}\times\setR^{\ge 0}\times \setR^{\ge 0})$ such that for
every $P\in \partial \Omega$
\begin{equation}
  \label{eq:tang-finalp}
  \begin{aligned}
    \intO \xi_P^2 \bigabs{ \partial _\tau \F (\bD\bu) }^2 + &\,\phi
    (\xi_P \abs{\partial _\tau \nabla \bu})+\phi (\xi_P
    \abs{\nabla\partial _\tau \bu})\, d\bx
    \\
    &\le M_4( \norm{\xi_P}_{2,\infty},\norm{a_P}_{C^{2,1}})
    \Big ( 1+ \intO  \phi^*(\abs{\ff}) \, d\bx\Big )\,, 
  \end{aligned}
\end{equation}
and 
\begin{align}
  \label{eq:tang-final-presp}
  \intO \xi_P^2 \bigabs{ \partial_\tau \pi }^2 \, d\bx \le
  M_5(\delta^{p-2}, \norm{\xi_P}_{2,\infty},\norm{a_P}_{C^{2,1}}) 
  \Big ( 1+ \intO  \phi^*(\abs{\ff}) \, d\bx\Big )\,,
\end{align}
where $\norm{a_P}_{C^{k,1}}$ means
$\norm{a_P}_{C^{k,1}(\overline{B^2_{3/4R_P}(0)})}$, for $k=1,2$.
\end{Proposition}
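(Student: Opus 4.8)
The plan is to mirror the proof of Proposition~\ref{prop:tangential1}, keeping track of the fact that now there is no quadratic piece $\bS^0$, so the term $\intO \xi^2\abs{\difp\bD\bu}^2\,d\bx$ is \emph{not} directly available: only $\intO \xi^2\abs{\difp\bF(\bD\bu)}^2\,d\bx$ and, via Lemma~\ref{lem:p-est}, the Orlicz quantities $\intO \phi(\xi\abs{\difp\nabla\bu})+\phi(\xi\abs{\nabla\difp\bu})\,d\bx$. First I would take the test function $\bv=\difn(\xi^2\,\difp(\bu_{|\widetilde\Omega_P}))$ in the weak formulation~\eqref{eq:eq-tlakp}, exactly as in~\eqref{eq:dtTx2}, obtaining the same decomposition $\sum_{j=1}^{15}I_j$ with $\pi$ now in $L^{p'}_0$ and $\ff\in L^{\phi^*}$. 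The left-hand side is bounded below, by Proposition~\ref{lem:hammer} and~\eqref{eq:3a}, by $c\intO\xi^2\abs{\difp\bF(\bD\bu)}^2\,d\bx$, and combining with Lemma~\ref{lem:p-est} (applied only with the exponent $p$, since there is no $p=2$ component) we get the lower bound
\begin{equation*}
  c\intO \xi^2\abs{\difp\bF(\bD\bu)}^2 + \phi(\xi\abs{\difp\nabla\bu}) + \phi(\xi\abs{\nabla\difp\bu})\,d\bx
\end{equation*}
up to the lower-order term $c(\norm\xi_{1,\infty},\norm a_{C^{1,1}})\int_{\Omega\cap\spt\xi}\phi(\abs{\nabla\bu})\,d\bx$, which is finite by~\eqref{eq:7p}.

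The core of the work is re-estimating $I_1,\dots,I_{15}$ in the Orlicz scale rather than the $L^2$ scale. For the terms involving $\bS(\bD\bu)$, I would use $\abs{\bS(\bD\bu)}\sim\phi'(\abs{\bD\bu})$ from~\eqref{eq:hammere} (with $\bfQ=\bfzero$), and for the tangential increments $\difp\T(\bD\bu)$ the equivalences~\eqref{eq:3}, namely $\abs{\difp\T(\bD\bu)}\sim(\phi''(\abs{\bD\bu}+\abs{\Delta^+\bD\bu}))^{1/2}\abs{\difp\bF(\bD\bu)}$. The various products $\phi'(\abs{\bD\bu})\,\abs{\difp\partial_3\bu}$, $\trap\pi\,\abs{\difp\partial_3\bu}$, etc., are then controlled by the refined Young inequality~\eqref{eq:ineq:young} in the form $t\,\phi'(s)+\phi'(t)\,s\le\epsilon\,\phi(t)+c_\epsilon\,\phi(s)$, so that the ``bad'' factors land on $\phi(\xi\abs{\difp\nabla\bu})$ (absorbable) and the ``good'' factors on $\phi(\abs{\nabla\bu})$, $\phi^*(\abs\pi)$, or $\phi^*(\abs\ff)$. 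For the pressure terms $I_8$--$I_{14}$ one splits $\abs\pi\cdot(\cdots)$ by $st\le\epsilon\,\phi(t)+c_\epsilon\,\phi^*(s)$ and uses~\eqref{eq:est-tlakp}; here~\eqref{eq:diff}, the translation arguments for $\bD\bu$ and $\pi$, and the change-of-variables bound $\int\phi(\xi\abs{\trap{(\partial_3\bu)}\difp\nabla a})\le c(\norm a_{C^{1,1}})\int_{\Omega\cap\spt\xi}\phi(\abs{\nabla\bu})$ are the routine tools, all already in the excerpt. After fixing $\epsilon$ small (proportional to $\norm\xi_{1,\infty}^{-2}$) one absorbs all terms carrying $\xi^2\abs{\difp\nabla\bu}^2$ or $\phi(\xi\abs{\difp\nabla\bu})$ into the left, getting~\eqref{eq:tang-finalp} after letting $h\to0$ and using~\eqref{eq:2a}, together with the identity $\nabla\partial_\tau\bu=\partial_\tau\nabla\bu+\nabla\partial_\alpha a\,\partial_3\bu$ to pass between $\difp\nabla\bu$ and $\nabla\difp\bu$.

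For the pressure estimate~\eqref{eq:tang-final-presp} I would repeat the duality argument around~\eqref{eq:tangpres1}--\eqref{eq:tangpres3}: decompose $\xi\,\difp\pi$ into its mean-free part plus its mean, bound the mean via Lemma~\ref{lem:TD3} by $\norm\xi_{1,\infty}^2\int_{\Omega\cap\spt\xi}\abs\pi^2\,d\bx$ (here I would rather keep everything in $L^2$ on $\spt\xi$ and invoke Remark~\ref{rem:equi-norm} so that $\phi^*$ and $\abs\cdot^{p'}$ are equivalent on the bounded set, controlling $\int\abs\pi^2$ by $\int\phi^*(\abs\pi)+1$), and bound the mean-free part by Lemma~\ref{lem:ism} with $\psi$ the quadratic N-function, writing the right side as $\sum_{k=1}^6 J_k$ using~\eqref{eq:dtTx1}. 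The $J_k$ are estimated with Young's inequality and~\eqref{eq:3}, transferring $\epsilon\abs{\nabla\bpsi}^2$ to the left (the $-C_0^{-1}\int\abs{\nabla\bpsi}^2$ term) and the remainder to $\int\xi^2\abs{\difp\bD\bu}^2$, $\delta^{p-2}\int\xi^2\abs{\difp\bF(\bD\bu)}^2$, $\int\phi^*(\abs\pi)$, $\int\phi^*(\abs\ff)$; the factor $\delta^{p-2}$ appears exactly as in~\eqref{eq:j1}--\eqref{eq:j3} because $\abs{\difp\bD\bu}\le c\,\delta^{(2-p)/2}\abs{\difp\bF(\bD\bu)}$ pointwise. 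Finally~\eqref{eq:tang-finalp}, the a priori bounds~\eqref{eq:7p},~\eqref{eq:est-tlakp}, and~\eqref{eq:2a} give~\eqref{eq:tang-final-presp}. The main obstacle, compared with Theorem~\ref{thm:MT1}, is precisely that $\difp\bD\bu$ is no longer directly bounded in $L^2$: every estimate that in the quadratic case used $\int\xi^2\abs{\difp\bD\bu}^2$ must now be rerouted either through the Orlicz quantity $\phi(\xi\abs{\difp\nabla\bu})$ coming from Lemma~\ref{lem:p-est}, or — when it cannot, as in the pressure duality argument — through the $\delta^{p-2}$-weighted bound $\abs{\difp\bD\bu}\le c\,\delta^{(2-p)/2}\abs{\difp\bF(\bD\bu)}$, which is why $M_5$ (unlike $M_4$) is allowed to blow up as $\delta\to0$.
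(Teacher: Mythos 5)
Your overall architecture coincides with the paper's: the same test function $\bv=\difn{(\xi^2\difp\bu_{|\widetilde{\Omega}_P})}$, the same decomposition into $I_1,\dots,I_{15}$, the lower bound via Proposition~\ref{lem:hammer} together with Lemma~\ref{lem:p-est} applied with the exponent $p$ only, Orlicz--Young estimates sending the absorbable contributions to $\phi(\xi\abs{\difp\nabla\bu})$ and the rest to $\phi(\abs{\nabla\bu})$, $\phi^*(\abs{\pi})$, $\phi^*(\abs{\ff})$, and the $L^2$ duality argument of Lemma~\ref{lem:ism} for $\partial_\tau\pi$ with the $J_k$ decomposition. All of that matches the paper.

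There is, however, one step that fails as written. In the pressure estimate you keep $\intO\xi^2\abs{\difp\bD\bu}^2\,d\bx$ among the remainder terms and claim it is controlled because ``$\abs{\difp\bD\bu}\le c\,\delta^{(2-p)/2}\abs{\difp\bF(\bD\bu)}$ pointwise''. This inequality is reversed. By~\eqref{eq:3} one has $\abs{\difp\bF(\bD\bu)}\sim(\delta+\abs{\bD\bu}+\abs{\Delta^+\bD\bu})^{\frac{p-2}{2}}\abs{\difp\bD\bu}$, and since $p-2<0$ while $\delta+\abs{\bD\bu}+\abs{\Delta^+\bD\bu}\ge\delta$, this yields $\abs{\difp\bF(\bD\bu)}\le c\,\delta^{\frac{p-2}{2}}\abs{\difp\bD\bu}$, i.e.\ $\abs{\difp\bD\bu}\ge c\,\delta^{\frac{2-p}{2}}\abs{\difp\bF(\bD\bu)}$; your bound breaks down wherever $\abs{\bD\bu}+\abs{\Delta^+\bD\bu}$ is large compared with $\delta$. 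The repair is that the term $\intO\xi^2\abs{\difp\bD\bu}^2\,d\bx$ should never appear: in~\eqref{eq:j1}--\eqref{eq:j3} it stems from the quadratic part $\bS^0$, which is absent here. For pure $(p,\delta)$-structure one estimates $J_1$ directly through the stress difference, $\abs{\difp\T(\bD\bu)}\sim(\delta+\abs{\bD\bu}+\abs{\Delta^+\bD\bu})^{\frac{p-2}{2}}\abs{\difp\bF(\bD\bu)}\le c\,\delta^{\frac{p-2}{2}}\abs{\difp\bF(\bD\bu)}$ --- here the negative exponent works in your favour --- so the remainder is $\delta^{p-2}\intO\xi^2\abs{\difp\bF(\bD\bu)}^2\,d\bx$ alone, which is exactly what~\eqref{eq:tang-finalp} controls and which produces the $\delta^{p-2}$-dependence of $M_5$. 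This is precisely what the paper means by ``neglecting the terms which resulted from $\bS^0$''; with that correction your argument goes through.
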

%
%
\begin{proof}
  As in the previous section we fix $P\in \partial \Omega$ and in
  $\O_P$ use $\xi:=\xi_P$, $a:=a_P$, 
  while $h\in(0,\frac{R_{P}}{16})$, as in Section~\ref{sec:bdr}. In
  the same way as in Section~\ref{sec:tan}, replacing
  $W^{1,2}_{0,\divo}(\O)$ by $W^{1,p}_{0,\divo}(\O)$ and using
  Lemma~\ref{lem:p-est} for $p\in (1,2)$ only, we take as test
  function $\bv=\difn{(\xi^2\difp\bu_{|\widetilde{\Omega}_P})}$, to
  obtain~\eqref{eq:dtTx2} and
  \begin{equation}
    \label{eq:tangp}
    \begin{aligned}
      \intO \xi^2 \bigabs{ \difp \F (\bD\bu) }^2& + \phi (\xi \,
      \abs{\difp\nabla \bu}) + \phi (\xi \,\abs{\nabla \difp \bu})\,
      d\bx
      \\
      &\le c \, \sum_{j=1}^{15} I_j+ c(
      \norm{\xi}_{1,\infty},\norm{a}_{C^{1,1}}) 
      \hspace*{-3mm}
      \int_{\Omega\cap \spt \xi}\hspace*{-3mm} \phi \big (\abs{
        \nabla\bu }\big ) \, d\bx\,.
    \end{aligned}
  \end{equation}
  Note, that in Section~\ref{sec:tan} all terms in $I_1,\ldots,I_7$
  resulting from $\bS^0$ have been absorbed in the corresponding terms
  in~\eqref{eq:tang} coming from $\bS^0$. The same for the terms in
  $I_1,\ldots,I_7$ resulting from $\bS^1$ which have been absorbed in
  the corresponding terms in~\eqref{eq:tang} coming from $\bS^1$,
  only.

  This implies that we can now treat $\bS$ in $I_1,\ldots,I_7$ here as
  $\bS^1$ in the corresponding terms $I_1,\ldots,I_7$ in
  Section~\ref{sec:Stokes} and obtain then
  \begin{align}
    &\bigabs{I_1} \le c\, (\varepsilon^{-1},\norm{a}_{C^{1,1}})
    \hspace*{-3mm} \int_{\Omega\cap \spt \xi}\hspace*{-3mm}
    \phi(\abs{\bD\bu}) \, d\bx + \varepsilon \intO \phi
    (\xi\abs{\difp\nabla \bu}) \, d\bx \,,\label{eq:i1p}
    \\
    &\bigabs{I_2 + I_3} \le c\,
    (\norm{\xi}_{1,\infty},\norm{a}_{C^{1,1}}) \hspace*{-3mm}
    \int_{\Omega\cap \spt\xi}\hspace*{-3mm} \phi(\abs{\nabla\bu}) \,
    d\bx\,,\label{eq:i23p}
    \\
    &\bigabs{I_4}\le c\, (\norm{a}_{C^{2,1}}) \hspace*{-3mm}
    \int_{\Omega\cap \spt \xi}\hspace*{-3mm} \phi(\abs{\nabla\bu}) \,
    d\bx\,, \label{eq:i4p}
    \\
    &\bigabs{I_5}\le c(\epsilon ^{-1},\norm{\xi}_{2,\infty})
    \hspace*{-3mm} \int_{\Omega\cap \spt \xi}\hspace*{-3mm}
    \phi(\abs{\nabla\bu}) \, d\bx + \varepsilon
    4\norm{\xi}_{1,\infty}^2 \hspace*{-3mm} \int_{\Omega\cap \spt
      \xi}\hspace*{-3mm} \phi(\abs{\xi\nabla\difp\bu}) \, d\bx
    \,,\label{eq:i5p}
    \\
    &\bigabs{I_6} \le c\, (\norm{\xi}_{1,\infty},\norm{a}_{C^{1,1}})
    \hspace*{-3mm} \int_{\Omega\cap \spt \xi}\hspace*{-3mm}
    \phi(\abs{\nabla\bu}) \, d\bx\,, \label{eq:i6p}
    \\
    &\bigabs{I_7} \le c\, (\varepsilon^{-1},\norm{a}_{C^{1,1}})
    \hspace*{-3mm} \int_{\Omega\cap \spt
      \xi}\hspace*{-3mm}\phi(\abs{\bD\bu}) \, d\bx + \varepsilon \intO
    \phi (\xi\abs{\difp\nabla \bu}) \, d\bx \,.\label{eq:i7p}
  \end{align}
  Also the terms with the pressure and the external force are
  estimated similarly to the corresponding terms in
  Section~\ref{sec:Stokes}. Since we know that $\pi \in L_0^{p'}(\O)$
  instead of $\pi \in L_0^2(\O)$ we use Young's inequality with $\phi$
  instead of $2$. In that way we obtain %
%
  \begin{align}
    \bigabs{I_8} & \le c\, (\norm{a}_{C^{2,1}}) \hspace*{-3mm}
    \int_{\Omega\cap \spt \xi}\hspace*{-3mm} \phi^*(\abs{\pi}) +
    \phi(\abs{\nabla \bu})\, d\bx\,, \label{eq:i8p}
    \\
    \bigabs{I_9 +I_{10}}&\le c\,
    (\norm{a}_{C^{1,1}},\norm{\xi}_{1,\infty}) \hspace*{-3mm}
    \int_{\Omega\cap \spt \xi}\hspace*{-3mm} \phi^*(\abs{\pi}) +
    \phi(\abs{\nabla \bu}) \, d\bx\,, \label{eq:i910p}
    \\
    \bigabs{I_{11}}&\le c(\epsilon ^{-1},\norm{\xi}_{2,\infty})
    \hspace*{-3mm} \int_{\Omega\cap \spt \xi}\hspace*{-3mm}
    \phi^*(\abs{\pi})+ \phi(\abs{\nabla \bu})\, d\bx \notag
    \\
    &\hspace*{30mm} + \varepsilon \max \big (1,4\norm{\xi}_{1,\infty}^2\big ) \hspace*{-3mm}
    \int_{\Omega\cap \spt \xi}\hspace*{-3mm}\phi(\xi\abs{\nabla
      \difp\bu})\, d\bx\,,\label{eq:i11p}
    \\
    \bigabs{I_{12}} &\le c\, (\epsilon^{-1},\norm{a}_{C^{1,1}})
    \hspace*{-3mm} \int_{\Omega\cap \spt \xi}\hspace*{-3mm}
    \phi^*(\abs{\pi})\, d\bx + \epsilon \int_{\Omega}
    \phi(\xi\abs{\difp\nabla \bu}) \, d\bx\,, \label{eq:i12p}
    \\
    \bigabs{I_{13}} &\le c\, (\epsilon^{-1},\norm{a}_{C^{1,1}})
    \hspace*{-3mm} \int_{\Omega\cap \spt \xi}\hspace*{-3mm}
    \phi^*(\abs{\pi}) + \phi(\abs{\nabla \bu}) \,
    d\bx\,, \label{eq:i13p}
    \\
    \bigabs{I_{14}} &\le c\, (\epsilon^{-1},\norm{a}_{C^{1,1}})
    \hspace*{-3mm} \int_{\Omega\cap \spt \xi}\hspace*{-3mm}
    \phi^*(\abs{\pi})\, d\bx + \epsilon \int_{\Omega}
    \phi(\xi\abs{\difp\nabla \bu}) \, d\bx\,, \label{eq:i14p}
    \\
    \bigabs{I_{15}} &\le c(\epsilon ^{-1},\norm{\xi}_{1,\infty})
    \hspace*{-3mm} \int_{\Omega\cap \spt
      \xi}\hspace*{-3mm}\phi^*(\abs{\ff})+ \phi(\abs{\nabla \bu})\,
    d\bx + \varepsilon \hspace*{-3mm} \int_{\Omega\cap \spt
      \xi}\hspace*{-3mm}\phi(\xi \abs{\nabla \difp\bu})\,
    d\bx\,, \label{eq:i15p}
  \end{align}
  where we also used~\eqref{eq:est-l} for the treatment of $I_{11}$
  (cf.~estimate~\eqref{eq:i5}). 
  Choosing in the estimates~\eqref{eq:i1p}--\eqref{eq:i15p} $\epsilon$
  the constant $\epsilon>0$
  \begin{equation*}
    \epsilon=\frac{1}{56\norm{\xi}_{1,\infty}^2},
  \end{equation*}
  we can absorb all terms involving tangential increments of
  $\nabla\bu$ in the left-hand side of~\eqref{eq:tang}, obtaining the
  following fundamental estimate
  \begin{equation}
    \label{eq:tang1p}
    \begin{aligned}
      \intO & \xi^2 \bigabs{ \difp \F (\bD\bu) }^2 + \phi (\xi
      \abs{\difp\nabla \bu}) + \phi (\xi \abs{\nabla \difp \bu})\,
      d\bx
      \\
      &\le c( \norm{\xi}_{2,\infty},\norm{a}_{C^{2,1}})
      \hspace*{-4mm} \int_{\Omega\cap \spt \xi}\hspace*{-4mm}
      \phi^*(\abs{\ff}) +\phi^*(\abs{\pi}) + \phi(\abs{\nabla \bu}) \,
      d\bx\,,
    \end{aligned}
  \end{equation}
  where the  right-hand side is finite (and independent of $h$)
  since~\eqref{eq:7p},~\eqref{eq:est-tlakp} imply 
  \begin{equation*}
    \begin{aligned}
      \intO & \xi^2 \bigabs{ \difp \F (\bD\bu) }^2 + \phi (\xi
      \abs{\difp\nabla \bu}) + \phi (\xi \abs{\nabla \difp \bu})\,
      d\bx
      \\
      & \le M_4( \norm{\xi}_{2,\infty},\norm{a}_{C^{2,1}}) \Big ( 1+
      \intO \phi^*(\abs{\ff}) \, d\bx\Big )\,. 
    \end{aligned}
  \end{equation*}
hence, by recalling ~\eqref{eq:2a} we  proved~\eqref{eq:tang-finalp}. 

\smallskip

To estimate the tangential derivatives of the pressure we proceed as
in Section~\ref{sec:Stokes}. The only difference is that the stress
tensor has no part with a $2$-structure. Even in this setting with pure
$(p,\delta)$-structure we are again able to show that $\partial_\tau
\pi\in L^2(\Omega\cap\spt\xi_P)$.  Starting from~\eqref{eq:tangpres1}
and neglecting from~\eqref{eq:tang2} the terms which resulted from
$\bS^0$, we arrive at the following inequality
\begin{equation}
  \label{eq:tang2p}
  \begin{aligned}
    \intO \xi^2 \bigabs{ \difp \pi}^2 \, d\bx &\le c(
    \delta^{p-2},\norm{\xi}_{2,\infty},\norm{a}_{C^{1,1}})
    \hspace*{-4mm} \int_{\Omega\cap \spt \xi}\hspace*{-4mm}
    \abs{\ff}^2 +\abs{\pi}^2 + \phi(\abs{\nabla \bu}) \, d\bx\,.
    \hspace*{-1mm}
  \end{aligned}
\end{equation} 
Young's inequality, Lemma~\ref{lem:change2} and $p<2$ yield
\begin{align*}
  \int_{\Omega\cap \spt \xi}\hspace*{-3mm} \abs{\ff}^2 +\abs{\pi}^2 \,
  d\bx \le c\, \Big ( (1+\delta^p)\abs{\Omega } + \hspace*{-3mm}
  \int_{\Omega\cap \spt \xi}\hspace*{-3mm} \phi^*(\abs{\ff }) +
  \phi^*(\abs{\pi}) \, d\bx\Big )\,.
\end{align*}
This and the previous estimate lead, as in Section~\ref{sec:Stokes},
to the inequality~\eqref{eq:tang-final-presp}.
\end{proof}
\begin{remark}
  \label{rem:tang}
  We can show with the same method that
  Proposition~\ref{prop:tangential2} holds with $\xi_P$ replaced by
  $\tilde \xi_P$ satisfying $(\ell1)$ with $R_P$ replaced by $R_P/2$,
  i.e.,~$\tilde \xi_P \in C^\infty _0(\widetilde \Omega_P)$, where
  $\widetilde \Omega_P:=\frac{1}{2}\Omega_P$ and
  $0\leq\tilde{\xi}_P\leq1$ with
  \begin{equation*}
    \chi_{\frac{1}{4}\Omega_P}(\bx)\leq\tilde{\xi}_P(\bx)\leq
    \chi_{\frac{3}{8}\Omega_P}(\bx).
  \end{equation*}
  %
\end{remark}
\begin{remark}
  The same procedure, with many simplifications, can be done in the
  interior of $\O$ for divided differences in all directions $\be^i$,
  $i=1,2,3$. By choosing
  $h\in\Big(0,\frac{1}{2}\dist(\spt\xi_{00},\partial\Omega)\Big)$ and
  mainly with the same steps as before this leads to
  \begin{equation}
    \label{inregp}
    \intO \xi_{00}^2 \bigabs{ \nabla \F (\bD\bu) }^2 + \phi (\xi_{00}
    \abs{ \nabla^2 \bu})\, d\bx    \le c(
    \norm{\xi_{00}}_{2,\infty}) \Big ( 1+ \intO  \phi^*(\abs{\ff})
    \, d\bx\Big )\,, 
  \end{equation}
and 
  \begin{align}
    \label{inreg-presp}
    \hspace*{-1mm}\intO \xi^2_{00} \bigabs{ \nabla \pi }^2 \, d\bx \le c(\delta^{p-2},
    \norm{\xi_{00}}_{2,\infty}) \le c(\delta^{p-2}, \norm{\xi_{00}}_{2,\infty})
    \Big ( 1\!+\! \intO  \phi^*(\abs{\ff}) \, d\bx\Big )\,, \hspace*{-3mm}
  \end{align}
  where $\xi_{00}$ is any cut-off function with compact support
  contained in $\Omega$.
\end{remark}
%
This remark proves the first estimate in Theorem~\ref{thm:MT2}.
Moreover, from~\eqref{inregp} and~\eqref{inreg-presp} we can infer immediately: 
\begin{corollary}
  \label{cor:loc}
  Under the same hypotheses of Theorem~\ref{thm:MT2} we obtain that
  $\bF(\bD)\in W^{1,2}_{\loc}(\Omega)$, $\bu\in W^{2,p}_\loc(\O)$, and
  $\pi \in W^{1,2}_\loc (\Omega)$. This implies --in particular-- that
  the equations of system~\eqref{eq} hold almost everywhere in
  $\Omega$
\end{corollary}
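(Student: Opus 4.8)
The plan is to read off all three local regularity statements directly from the interior estimates \eqref{inregp} and \eqref{inreg-presp}, and then to promote the weak formulation \eqref{eq:eq-tlakp} to a pointwise identity by a purely local integration by parts; all the analytic work has already been done in proving those two estimates.

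First I would fix an arbitrary open set $\Omega'\Subset\Omega$ and a cut-off function $\xi_{00}\in C^\infty_0(\Omega)$ with $\xi_{00}\equiv 1$ on $\Omega'$ and $\spt\xi_{00}\subset\Omega$. Since $\ff\in L^{p'}(\Omega)=L^{\phi^*}(\Omega)$ (cf.~Remark~\ref{rem:equi-norm}), the right-hand sides of \eqref{inregp} and \eqref{inreg-presp} are finite, so these estimates give $\int_{\Omega'}\abs{\nabla\bF(\bD\bu)}^2\,d\bx<\infty$, $\int_{\Omega'}\phi(\abs{\nabla^2\bu})\,d\bx<\infty$ and $\int_{\Omega'}\abs{\nabla\pi}^2\,d\bx<\infty$. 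Combined with the a priori bounds \eqref{eq:7p} and \eqref{eq:est-tlakp} (which yield $\bF(\bD\bu)\in L^2(\Omega)$, $\bu\in W^{1,p}(\Omega)$ and $\pi\in L^{p'}(\Omega)\subset L^2(\Omega)$, the last inclusion because $p<2$ and $\Omega$ is bounded), this shows $\bF(\bD\bu)\in W^{1,2}(\Omega')$ and $\pi\in W^{1,2}(\Omega')$. For $\nabla^2\bu$ the only slightly non-routine point is to pass from the \emph{modular} bound $\int_{\Omega'}\phi(\abs{\nabla^2\bu})\,d\bx<\infty$ to the statement $\nabla^2\bu\in L^p(\Omega')$: this is exactly Remark~\ref{rem:equi-norm}, by which $\norm{\cdot}_\phi$ and $\norm{\cdot}_p$ are equivalent on the bounded set $\Omega'$ (with constants depending on $p$ and $\delta_0$). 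Hence $\bu\in W^{2,p}(\Omega')$, and since $\Omega'$ was arbitrary we obtain $\bF(\bD\bu)\in W^{1,2}_{\loc}(\Omega)$, $\bu\in W^{2,p}_{\loc}(\Omega)$ and $\pi\in W^{1,2}_{\loc}(\Omega)$.

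Next I would upgrade \eqref{eq:eq-tlakp} to an identity valid a.e. From \eqref{eq:ass_S} and \eqref{eq:phi''}, together with $p<2$, one has the pointwise bound $\abs{\nabla\bfS(\bD\bu)}\le c\,\kappa_1\,\phi''(\abs{\bD\bu})\,\abs{\nabla\bD\bu}\le c\,\delta^{p-2}\,\abs{\nabla^2\bu}$, so $\bfS(\bD\bu)\in W^{1,p}_{\loc}(\Omega)$; likewise $\pi\in W^{1,2}_{\loc}(\Omega)\subset W^{1,p}_{\loc}(\Omega)$ and $\ff\in L^{p'}(\Omega)\subset L^p_{\loc}(\Omega)$. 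Then for every $\bv\in C^\infty_0(\Omega)$, integrating by parts in \eqref{eq:eq-tlakp} gives
\begin{equation*}
  \intO \big(-\divo\bfS(\bD\bu)+\nabla\pi-\ff\big)\cdot\bv\,d\bx = 0 ,
\end{equation*}
and since the bracketed integrand is a fixed element of $L^1_{\loc}(\Omega)$, the fundamental lemma of the calculus of variations forces $-\divo\bfS(\bD\bu)+\nabla\pi=\ff$ a.e.\ in $\Omega$; the incompressibility constraint $\divo\bu=0$ holds a.e.\ simply because $\bu\in W^{1,p}_{0,\divo}(\Omega)$.

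I do not expect a genuine obstacle here. The two points deserving care are (i) the modular-to-norm passage just mentioned, handled by Remark~\ref{rem:equi-norm}, and (ii) verifying that the constants in \eqref{inregp}--\eqref{inreg-presp} are finite under the standing hypotheses — which is why $\ff\in L^{p'}(\Omega)$ and $\delta>0$ (hence the dependence on $\delta^{-1}$) must be kept.
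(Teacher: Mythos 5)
Your proposal is correct and follows the same route as the paper, which derives Corollary~\ref{cor:loc} directly ("immediately") from the interior estimates \eqref{inregp} and \eqref{inreg-presp}; you have merely spelled out the routine details (choice of cut-off, the modular-to-norm passage via Remark~\ref{rem:equi-norm}, and the integration by parts plus the fundamental lemma to get the a.e.\ identity), all of which are sound given $\delta>0$ and $\ff\in L^{p'}(\Omega)$.
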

We observe that in the interior we can extract more information about
the local integrability of $\nabla ^2 \bu$, by using  simple arguments
combining H\"older and Sobolev inequalities, as for instance is done
in~\cite{bdr-7-5} in the space periodic case.  For every $G\subset
\subset \Omega$ and some $q\in ]p,2]$ we have
\begin{equation}
  \label{eq:imp}
  \begin{aligned}
    \norm{\nabla ^2 \bu}_{q,G}^q  &= \int_G \left (
      \varphi''(\abs{\bD\bu})\abs{\nabla ^2 \bu}^2\right)^{\frac q2}
    \big(\varphi''(\abs{\bD\bu})\big)^{-\frac q2} \, d\bx
    \\
    &\le \Big ( \int_G \varphi''(\abs{\bD\bu})\abs{\nabla ^2 \bu}^2 \,
    d\bx\Big )^{\frac q2} \Big ( \int_G \big(\delta
    +\abs{\bD\bu}\big)^{{(2-p)}\frac q{2-q}} \, d\bx\Big )^{\frac {2-q}2}
    \\
    &\le c\, \norm {\nabla \bF(\bD\bu)}_{2,G}^{q} \big (1 + \norm{\nabla
      \bu}^{2-p}_{(2-p)\frac q{2-q},G} \big )
    \\
    &\le c\, \norm {\nabla \bF(\bD\bu)}_{2,G}^{q} \big (1 + \norm{\nabla
      \bu}^{2-p}_{r,G}+ \norm{\nabla ^2 \bu}^{2-p}_{r,G} \big )\,,
  \end{aligned}
\end{equation}
where we used the H\"older inequality, the algebraic identity
$\partial^2_{jk}u^i= \partial_j\bD_{ik}+\partial_k\bD_{ij}-\partial_i\bD_{jk}$, 
and the embedding $W^{1,r}(G)\embedding L^{(2-p)\frac q{2-q}}(G)$
valid for $r=\frac{3q(2-p)}{6-q(p+1)}$. Requiring that $r= q$, this
yields $ q=\frac{3p}{p+1}$ and we can absorb the last term from the
right-hand side into the left-hand side of~\eqref{eq:imp}. Note that
$\norm{\nabla \bu}^{2-p}_{r,G}$ is finite since $\nabla \bu \in
L^{\frac{3p}{3-p}}(G)$. This and a Sobolev embedding theorem show
\begin{equation}
  \label{eq:reg2}
  \nabla ^2\bu \in L^{\frac{3p}{p+1}}_\loc(\Omega) \quad
  \text{and}\quad \nabla \bu
  \in L_\loc ^{3p}(\Omega)\,,
\end{equation}
which is the known best regularity results also in the space-periodic
case~\cite{bdr-7-5}.
\subsection{Regularity in the normal direction}
\label{sec:norp}
We follow the same reasoning as in the previous Section~\ref{sec:nor}
to prove the analogue of Proposition~\ref{prop:normal1}. Since the
results are very similar to those of the previous section, we now just
point out the differences going directly to the proof of the main
result.
\begin{proof}[Proof of Theorem~\ref{thm:MT2}]
  Let the same assumptions as in Proposition~\ref{prop:tangential2} be
  satisfied. Moreover, assume that $r_P$ in $(b3)$
  satisfies~\eqref{eq:smallness-a}. Then, we obtain for every
  $P\in\bou$ the following estimate (which is the counterpart
  of~\eqref{eq:estimate_normal_final} when the quadratic term is
  missing)
  \begin{equation}
    \label{eq:estimate_normal_final2}
    \begin{aligned}
      & \phi''(|\bD\bu|)|\nabla ^{2}\bu| \leq
      c\left(|\bff|+|\partial_{\tau}\pi|+
        \phi''(|\bD\bu|) 
        |\partial_{\tau}\nabla\bu|
      \right)\quad \textrm{a.e. in }\Omega_P.
    \end{aligned}
  \end{equation}
%
  In Section~\ref{sec:nor} we absorbed the last term
  in~\eqref{eq:estimate_normal_final2} in the term stemming from the
  extra stress tensor with $2$-structure. Here we have to proceed
  differently. Moreover, we have to deal with the problem that
  $\partial_\tau \bF(\bD\bu)$ in~\eqref{eq:tang-finalp} only yields a
  weighted information for $\partial _\tau \bD\bu$, while
  in~\eqref{eq:estimate_normal_final2} occurs $\partial_\tau
  \nabla\bu$.  The usual approach to resolve such problems by a Korn's
  inequality is not applicable in the moment, since it is not known
  whether or not an inequality of the type
  \begin{equation*}
    \int \phi''(|\bD\bu|)|\nabla \partial _\tau\bu|^2 \, d\bx \le c\,
    \int \phi''(|\bD\bu|)|\bD \partial _\tau\bu|^2 \, d\bx\,, 
  \end{equation*}
  holds true. Thus, we proceed differently and argue similarly
  to~\eqref{eq:imp}. Since we do not know if $\partial _3
  \bF(\bD\bu)\in L^2(\Omega)$ we use the anisotropic embedding from
  Theorem~\ref{thm:aniso}.  For technical reasons we work with the
  localization $\tilde \xi_P$ instead of $\xi_P$
  (cf.~Remark~\ref{rem:tang}) and use
  Proposition~\ref{prop:tangential2} for $\tilde{\xi}_P$.

  To have all following computations justified we use
  Corollary~\ref{cor:loc} and~\eqref{eq:reg2}, restrict ourselves to
  $q \in [1, \frac {3p}{p+1}\big ]$. We work, in the set
  $\widetilde{\Omega}_{\epsilon} = \widetilde{\Omega}_{P,\epsilon}$
  defined as follows
  \begin{equation*}
    \widetilde{\Omega}_{P,\epsilon}
    :=\{\bx\in \widetilde{\Omega}_P: \
    a_P(x)+\epsilon<x_3<a_P(x)+R'_P/2,\text{ for } 0<\epsilon<R_P/8 \}
%
  \end{equation*}
  for a fixed $P\in \bou$ and abbreviate $\tilde{\xi}=\tilde{\xi}_P$.
  (As usual we call $R_{P}:=\min\{R_{P},R'_{P}\}$.)
  Theorem~\ref{thm:aniso} yields
  \begin{align}
    \label{eq:troisi1}
    \begin{aligned}
      \norm{\tilde{\xi}\, \bF(\bD\bu)}_{3q,\Oe}&\le c\,\big (
      \norm{\partial _3 (\tilde{\xi}\, \bF(\bD\bu))}_{q,\Oe} +
      \norm{\partial _{\tau_1} (\tilde{\xi}\, \bF(\bD\bu))}_{q,\Oe}
      \\
      &\qquad\qquad+ \norm{\partial_{\tau_2} (\tilde{\xi}\,
        \bF(\bD\bu))}_{q,\Oe}\big)
      \\
      &\le c(\|\tilde{\xi}\, \|_{1,\infty})\,\big ( 1 +\norm{\tilde
        \xi\, \partial _3 \bF(\bD\bu)}_{q,\Oe} \big)\,,
    \end{aligned}
  \end{align}
  where we also used~\eqref{eq:7p}, the identity $\partial_\bv
  (\tilde{\xi}\, \bF(\bD\bu)) = \tilde{\xi}\,\partial_\bv \bF(\bD\bu)
  + \bF(\bD\bu)\,\partial _\bv \tilde{\xi}$, valid for any vector $\bv
  \in \setR^3$, and~\eqref{eq:tang-finalp}. To estimate the last term
  we recall that (cf.~\cite[Lemma 3.8]{bdr-7-5})
  \begin{equation}
    \sqrt{\phi''(|\bD\bu|)}|\nabla^2\bu|\sim|\nabla\bF(\bD\bu)|\,.\label{eq:nablaF}
  \end{equation}
  Thus, after multiplying both sides
  of~\eqref{eq:estimate_normal_final2} by $\tilde{\xi}$, integration
  over $\widetilde{\Omega}_\epsilon$ yields
  \begin{equation}
    \label{eq:Fq}
    \begin{aligned}
      &\int_{\widetilde \Omega_\epsilon} \tilde{\xi}^q |\nabla
      \bF(\bD\bu)|^q\, d\bx
      \\
      &\leq c\int_{\widetilde \Omega_\epsilon}
      \phi''(|\bD\bu|)^{-\frac{q}{2}}(|\bff|^q+|\partial_{\tau}\pi|^q)\,
      \tilde \xi^q + \phi''(|\bD\bu|)^{\frac{q}{2}}
      |\partial_{\tau}\nabla\bu|^q \, \tilde{\xi}^q\, d\bx\,.
    \end{aligned}
  \end{equation}
  We know that the right-hand side is finite, but depending on
  $\vep$. Now we derive estimates independent of $\vep$ for $q\in
  [1,\frac {3p}{p+1}\big ]$, as large as possible. By H\"older
  inequality with exponents $2/q$ and $2/(2-q)$ and~\eqref{eq:phi''}
  we obtain, for any $g\in L^2_\loc(\Omega)$,
  \begin{equation}
    \int_{\Omega_\epsilon} \phi''(|\bD\bu|)^{-\frac{q}{2}}
    |g|^q \tilde{\xi}^q\,d\bx\leq c \, \bignorm{g\, \tilde{\xi}}_{2,\widetilde\Omega_\epsilon}^{q}\,
    \bignorm{(\delta+|\bD\bu|)^{(2-p)}\tilde{\xi}}_{\frac q{2-q},\widetilde\Omega_\epsilon}^{ q}\,.
  \end{equation}
  To get a preliminary improvement of the integrability of $\bD\bu$ we
  apply this to $g=\ff$ and $g=\partial _\tau \pi$ for $q=p$. We also
  note that
  \begin{equation*}
    \int_{\widetilde\Omega_\epsilon}
    \phi''(|\bD\bu|)^{\frac{p}{2}}    |\partial_{\tau}\nabla\bu|^p
    \, \tilde{\xi}^p\,d\bx \leq  c\,\delta^{\frac{(p-2)p}{2}}\int_{\widetilde\Omega_\epsilon}
    \Big[    \phi(|\tilde\xi\,\partial_\tau\nabla\bu|)+\delta^p\Big]\,d\bx,
  \end{equation*}
  together with Proposition~\ref{prop:tangential2}, shows that for
  $q=p$ the right-hand side of~\eqref{eq:Fq} is finite and
  independent of $\vep$. Thus,~\eqref{eq:troisi1} and Levi's theorem
  on monotone convergence imply
  \begin{equation}
    \label{eq:first}
    \norm{\tilde{\xi}_P\, \bF(\bD\bu)}_{3p,\widetilde \Omega_P}\le c\,.
  \end{equation}
  Choosing a finite sub-covering of the covering $\bou \subset
  \bigcup_{P\in \bou} \widehat \Omega_P$, where $\widehat
  {\Omega}_P=\frac{1}{4}\Omega_P$ (cf.~$(b2)$), we get
  that~\eqref{eq:first} and Corollary~\ref{cor:loc} imply
  \begin{equation}
    \label{eq:normal-intermediate-step}
    \bF(\bD\bu)\in L^{3p}(\Omega)\quad\text{and}\quad
    \nabla\bu\in L^{\frac{3p^2}{2}}(\Omega)\,,
  \end{equation}
  where we also used~\eqref{eq:hammerq} and Korn's inequality.

  A better estimate of the last term on the right-hand side
  of~\eqref{eq:Fq} is the key to show that~\eqref{eq:Fq} is finite and
  independent of $\vep$, for some $q>p$.
  To this end we observe that, by using~\eqref{eq:two-derivatives-a}
  we have for $\alpha=1,2$
  \begin{equation*}
    \begin{aligned}
      \tilde{\xi}\,\partial_{\tau_\alpha}\nabla\bu&=\tilde{\xi}\,(\nabla\partial_{\tau_\alpha}\bu)-
      \tilde{\xi}\,(\nabla\partial_\alpha a\,\partial_3\bu)
      \\
      &=\nabla(\partial_{\tau_\alpha}\bu\,\tilde{\xi})-\partial_{\tau_\alpha}\bu\otimes\nabla\tilde{\xi}-
      \tilde{\xi}\,(\nabla\partial_\alpha a\,\partial_3\bu)\,.
    \end{aligned}
  \end{equation*}
  Consequently, 
  \begin{equation*}
    |\tilde{\xi}\,\partial_\tau\nabla\bu|^q\leq|\nabla(\partial_\tau\bu\,\tilde{\xi})|^q
    +c(\|\tilde{\xi}\|_{1,\infty},\|a\|_{C^{2,1}},q)|\nabla\bu|^q\qquad
    \textrm {a.e.\ in }\Omega_P\,,
  \end{equation*}
  and, in order to estimate the last term from the right hand side
  of~\eqref{eq:Fq}, we end up to consider the following integral
  \begin{equation*}
    \mathcal{I}:=  \int_{\widetilde{\Omega}_\epsilon}
    \phi''(|\bD\bu|)^{\frac{q}{2}}\Big(|\nabla(\partial_\tau\bu\,\tilde{\xi})|^q+|\nabla\bu|^q\Big)\,d\bx
    :=\mathcal{I}_1+\mathcal{I}_2\,.
  \end{equation*}
  Since $q\le \frac{3p}{p+1}\leq \frac{3p^2}{2}$, for all $p\in(1,2)$,
  we get from~\eqref{eq:normal-intermediate-step}
  \begin{equation*}
    \mathcal{I}_2 
    \leq  \delta^{\frac{(p-2)q}{2}}\int_{\widetilde{\Omega}_\epsilon} 
    |\nabla\bu|^{q}\,d\bx\leq c\,\Big (\int_{\Omega}|\nabla
    \bu|^{\frac{3p^2}{2}}\,d\bx\Big )^{\frac {2q}{3p^2}} < c(\delta^{-1})\,. 
  \end{equation*}
%
%
%
%
  Due to Korn's inequality (cf.~\cite[Thm.~5.17]{DieRS10}) there
  exists a constant depending on the John constant of $\widetilde
  \Omega$ and $q$ such that the term $\mathcal I_1$ can be estimated
  as follows:
  \begin{equation*}
    \begin{aligned}
      \mathcal{I}_1 
      &\leq \delta^{\frac{(p-2)q}{2}}\int_{\Omega_\epsilon}
      |\nabla(\partial_{\tau}\bu\,\tilde{\xi})|^q d\bx
      \\
      &\leq
      c\,\delta^{\frac{(p-2)q}{2}}\int_{\Omega_\epsilon}|\bD(\partial_{\tau}\bu\,\tilde{\xi})-
      \bigmean{\bD(\partial_{\tau}\bu\,\tilde{\xi})}_{\Omega_\epsilon}|^q
      +
      |\partial_{\tau}\bu\,\tilde{\xi}-
      \bigmean{\partial_{\tau}\bu\,\tilde{\xi}}_{\Omega_\epsilon}|^q\,d\bx
      \\
      &\le c\,\Big ( 1+
      \int_{\Omega_\epsilon}|\bD(\partial_{\tau}\bu\,\tilde{\xi})|^q
      \,d\bx\Big )\,,
    \end{aligned}
  \end{equation*}
  where we also used
  $\int_{\Omega_\epsilon}|\mean{g}_{\Omega_\epsilon}|^q\,d\bx\leq
  \int_{\Omega_\epsilon}|g|^q\,d\bx$, H\"older's inequality,
  and~\eqref{eq:normal-intermediate-step}.
  Using now the identities
  \begin{align*}
    \bD(\partial_{\tau_\alpha}\bu\,\tilde\xi)&=\tilde\xi\,\bD
    (\partial_{\tau_\alpha}\bu)+\partial_{\tau_\alpha}\bu\otimess\nabla\tilde\xi\,,
    \\
    \bD(\partial_{\tau_\alpha} \bu)&=\partial_{\tau_\alpha}
    \bD\bu+\bD(\nabla a)\otimess\partial_3\bu\,,
  \end{align*}
  we finally obtain
  \begin{equation*}
    \begin{aligned}
      \mathcal{I}_1
      \leq
      c(\delta^{-1},\norm{\tilde{\xi}}_{1,\infty},q,\norm{a}_{C^{2,1}},\widetilde
      \Omega_P) \Big(1+\int_{\Omega_\epsilon}
      |\partial_{\tau}\bD\bu|^q\,\tilde{\xi}^q\,d\bx\Big).
    \end{aligned}
  \end{equation*}
  From~\eqref{eq:3a} it follows
  \begin{equation}
    \label{eq:tangF} 
    \sqrt{\phi''(|\bD\bu|)}|\partial _\tau
    \bD\bu|\sim|\partial_\tau\bF(\bD\bu)|.
  \end{equation}
  Proceeding similarly to~\eqref{eq:imp} we get, also
  using~\eqref{eq:hammerq},
  \begin{equation*}
    \begin{aligned}
      \int_{\widetilde\Omega_\epsilon}|\partial_{\tau}\bD\bu|^q\,\tilde{\xi}^q\,d\bx
      &=
      \int_{\widetilde\Omega_\epsilon}\phi''(|\bD\bu|)^{\frac{q}{2}}
      |\partial_{\tau}\bD\bu|^q\,\tilde{\xi}^q\phi''(|\bD\bu|)^{-\frac{q}{2}}\,d\bx
      \\
      &\leq c \left[\int_{\widetilde{\Omega}_\epsilon}
        |\partial_{\tau}\bF(\bD\bu)|^2\,\tilde{\xi}^{\frac{4}{p'}}d\bx\right]^{\frac{q}{2}}
      \left[\int_{\widetilde{\Omega}_\epsilon}|\tilde{\xi}\,
        \bF(\bD\bu)|^{\frac{2-p}{p}\frac
          {2q}{2-q}}+\delta^{\frac{(2-p)q}{2-q}}
        \,d\bx\right]^{\frac{2-q}{2}}.
    \end{aligned}
  \end{equation*}
  Since we want to absorb the last term in the left-hand side
  of~\eqref{eq:troisi1}, we require that
  $\frac{2-p}{p}\frac{2q}{2-q}=3q$, which yields
  \begin{equation}
    \label{eq:q}
    q =\frac {8p-4}{3p}\,.
  \end{equation}
  Due to the fact that the exponent of the localization function of
  the first term on the right-hand side is not $2$ we enlarge the
  integration domain.  Recall, $\tilde{\xi}=\tilde{\xi}_P$ and by
  construction $\xi=\xi_P\equiv 1$ on
  $\widetilde{\Omega}=\widetilde{\Omega}_P$. Thus, we can write
  \begin{equation*}
    \begin{aligned}
      \int_{\widetilde{\Omega}_{P,\epsilon}}
      |\partial_{\tau}\bF(\bD\bu)|^2\,\tilde{\xi}_P^{\,\frac{4}{p'}}d\bx&\leq\int_{\widetilde{\Omega}_{P,\epsilon}}
      |\partial_{\tau}\bF(\bD\bu)|^2\,d\bx\leq
      \int_{\widetilde{\Omega}_P} |\partial_{\tau}\bF(\bD\bu)|^2\,d\bx
      \\
      &\leq \int_{\widetilde{\Omega}_P}
      |\partial_{\tau}\bF(\bD\bu)|^2\xi_P^2\,d\bx
      \leq\int_{{\Omega}_P}
      |\partial_{\tau}\bF(\bD\bu)|^2\xi^2_P\,d\bx\leq c,
    \end{aligned}
  \end{equation*}
  which is finite by Proposition~\ref{prop:tangential2}. Hence, we
  finally proved that
  \begin{equation}
    \label{eq:I}
    \mathcal{I}\leq
    c(\norm{\tilde{\xi}}_{1,\infty},\norm{a}_{C^{2,1}},\delta^{-1}) \Big(1+\norm{ 
      \bF(\bD\bu)\, \tilde{\xi}}^{q\frac{2-p}{p}}_{3q,\widetilde{\Omega}_\epsilon}\Big )\,,
  \end{equation}
  if $q$ is given by~\eqref{eq:q}.  We estimate the term with
  $\partial _\tau \pi$ in~\eqref{eq:Fq} as follows by using H\"older
  inequality
  \begin{align}
    \label{eq:other}
    \int_{\widetilde{\Omega}_\epsilon}\!
    \phi''(|\bD\bu|)^{-\frac{q}{2}}\tilde{\xi}^q|\partial_{\tau}\pi|^q\,
    \tilde{\xi}^q\,d\bx &\leq c\int_{\widetilde{\Omega}_\epsilon}
    \!\left(\delta^{\frac
        p2}\!+\!|\bF(\bD\bu)|\right)^{q\frac{2-p}{p}}
    |\partial_{\tau}\pi|^{q}\,\tilde{\xi}^{q}\,d\bx
    \\
    &\leq c \Big(1+\norm{ \bF(\bD\bu)\, \tilde{\xi}}^{q\frac{2-p}{p}}
    _{3q,\widetilde{\Omega}_\epsilon}\Big )
    \Big(\int_{\widetilde{\Omega}_\epsilon}
    |\partial_\tau\pi|^2\,\tilde{\xi}^{\frac{4}{p'}}\,d\bx\Big)^{\frac{3p}{4p-2}}
    \notag
    \\
    &\leq c \Big(1+\norm{ \bF(\bD\bu)\, \tilde{\xi}}^{q\frac{2-p}{p}}
    _{3q,\widetilde{\Omega}_\epsilon}\Big ) \Big(\int_{{\Omega}_P}
    |\partial_\tau\pi|^2\,{\xi}^{2}\,d\bx\Big)^{\frac{3p}{4p-2}}
    \notag
    \\
    &\leq c \Big(1+\norm{ \bF(\bD\bu)\, \tilde{\xi}}^{q\frac{2-p}{p}}
    _{3q,\widetilde{\Omega}_\epsilon}\Big )\,,\notag
  \end{align}
  where we increased the integration domain as above and used
  Proposition~\ref{prop:tangential2}. The term with $\ff$
  in~\eqref{eq:Fq} is estimated in the same way with several
  simplifications.  Inserting this estimate,~\eqref{eq:I}
  and~\eqref{eq:other} into~\eqref{eq:Fq}, we obtain
  from~\eqref{eq:troisi1}
  \begin{align*}
    \norm{ \bF(\bD\bu)\, \tilde{\xi}}_{3q,\Oe}&\le c\,\big ( 1
    +\norm{\tilde{\xi}\, \partial_3 \bF(\bD\bu)}_{q,\Oe} \big) \leq c
    \Big(1+\norm{ \bF(\bD\bu)\,
      \tilde{\xi}}^{\frac{2-p}{p}}_{3q,\widetilde{\Omega}_\epsilon}\Big)\,.
  \end{align*}
  Since $p \in (1,2)$ we can absorb the right-hand side into the
  left-hand side by Young's inequality. Thus, we proved for $q$
  satisfying~\eqref{eq:q}, that there exists a constant $c$ such
  that for every $P\in \bou$ there holds
  \begin{equation*}
    \norm{ \bF(\bD\bu) \tilde{\xi}_P}_{3q,\Oe}\! +\!\norm{\tilde
      \xi_P \partial _3 \bF(\bD\bu)}_{q,\Oe} \! \leq
    c(\norm{{\xi}_P}_{2,\infty},\norm{\tilde{\xi}_P}_{1,\infty},\norm{a_P}_{C^{2,1}},\delta^{-1})\,.  
  \end{equation*}
  Since $c$ is independent of $\epsilon>0$, Levi's monotone
  convergence theorem implies that
  \begin{equation}
    \label{eq:troisi2}
    \norm{ \bF(\bD\bu) \tilde{\xi}_P}_{\frac{8p-4}{p},\widetilde \Omega_P} +\norm{\tilde
      \xi_P \partial _3 \bF(\bD\bu)}_{\frac{8p-4}{3p},\widetilde \Omega_P} 
    \leq c    \, 
  \end{equation}
  which proves the third estimate in Theorem~\ref{thm:MT2}.
  Using the same covering argument which led
  to~\eqref{eq:normal-intermediate-step} we now obtain
  \begin{equation*}
    \nabla \bF(\bD\bu)\in
    L^{\frac{8p-4}{3p}}(\Omega) \quad\text{and}\quad   \bF(\bD\bu)\in
    L^{\frac{8p-4}{p}}(\Omega).
  \end{equation*}
  Then, by usual manipulation of the quantity $\bF(\bD\bu)$ we obtain
  that
  \begin{equation*}
    \nabla^2\bu\in L^{\frac{4p-2}{p+1}}(\Omega) \quad\text{and}\quad
    \nabla \bu\in  L^{{4p-2}}(\Omega)\,, 
  \end{equation*}
  or, by considering tangential and normal derivatives of $\nabla\bu$,
  the statements concerning the derivatives of $\bu$ in
  Theorem~\ref{thm:MT2}. This and~\eqref{eq:pi3} together
  with~\eqref{eq:tang-final-presp} proves the statements concerning
  the pressure $\pi$ in Theorem~\ref{thm:MT2}.  Theorem~\ref{thm:MT2}
  is now completely proved.
\end{proof}
\section*{Acknowledgments}
Both authors thank INDAM for the financial support. LCB is a member of
GNAMPA.
\def\cprime{$'$} \def\cprime{$'$} \def\cprime{$'$}
  \def\ocirc#1{\ifmmode\setbox0=\hbox{$#1$}\dimen0=\ht0 \advance\dimen0
  by1pt\rlap{\hbox to\wd0{\hss\raise\dimen0
  \hbox{\hskip.2em$\scriptscriptstyle\circ$}\hss}}#1\else {\accent"17 #1}\fi}
  \def\cprime{$'$} \def\polhk#1{\setbox0=\hbox{#1}{\ooalign{\hidewidth
  \lower1.5ex\hbox{`}\hidewidth\crcr\unhbox0}}}

\end{document}